\numberwithin{equation}{section}
\theoremstyle{plain}
\newtheorem{proposition}{Proposition}[section]
\newtheorem{theorem}[proposition]{Theorem}
\newtheorem{lemma}[proposition]{Lemma}
\theoremstyle{definition}
\newtheorem{definition}[proposition]{Definition}
\theoremstyle{remark}
\renewenvironment{proof}{\smallskip\noindent\emph{\textbf{Proof.}}\hspace{1pt}}%
{\hspace{-5pt}{\nobreak\quad\nobreak\hfill\nobreak$\square$\vspace{8pt}%
\par}\smallskip\goodbreak}
\newenvironment{proofof}[1]{\smallskip\noindent\emph{\textbf{Proof of #1.}}%
\hspace{1pt}}{\hspace{-5pt}{\nobreak\quad\nobreak\hfill\nobreak%
$\square$\vspace{8pt}\par}\smallskip\goodbreak}
\renewcommand{\leq}{\leqslant}
\renewcommand{\geq}{\geqslant}
\newcommand{\C}[1]{\mathbf{C}^{#1}}
\newcommand{\Cc}[1]{\mathbf{C}_c^{#1}}
\newcommand{\modulo}[1]{{\left|#1\right|}}
\newcommand{\norma}[1]{{\left\|#1\right\|}}
\newcommand{\reali}{{\mathbb{R}}}
\newcommand{\naturali}{{\mathbb{N}}}
\newcommand{\rpic}{{\mathbb{R}^+}}
\newcommand{\BV}{\mathbf{BV}}
\renewcommand{\epsilon}{\varepsilon}
\renewcommand{\phi}{\varphi}
\renewcommand{\theta}{\vartheta}
\renewcommand{\L}[1]{{\mathbf{L}^#1}}
\newcommand{\W}[2]{{\mathbf{W}^{#1,#2}}}
\newcommand{\Lloc}[1]{{\mathbf{L}_{loc}^{#1}}}
\newcommand{\tv}{\mathinner{\rm TV}}
\newcommand{\caratt}[1]{{\chi_{\strut#1}}}
\renewcommand{\div}{\mathinner{\rm div}}
\newcommand{\spt}{\mathop{\rm spt}}
\newcommand{\sgn}{\mathop{\rm sgn}}
\renewcommand{\d}[1]{\mathinner{\mathrm{d}{#1}}}
\begin{document}

\title{A Class of Non-Local Models for Pedestrian Traffic}

\author{Rinaldo M.~Colombo$^1$, Mauro Garavello$^2$, Magali
  L\'ecureux-Mercier$^3$}

\footnotetext[1]{Dipartimento di Matematica, Universit\`a degli studi
  di Brescia, Italia. \texttt{rinaldo@ing.unibs.it}}

\footnotetext[2]{Di.S.T.A., Universit\`a del Piemonte Orientale,
  Italia, \texttt{mauro.garavello@mfn.unipmn.it}}

\footnotetext[3]{Universit\'e d'Orl\'eans, UFR Sciences, B\^atiment de
  math\'ematiques - Rue de Chartres B.P. 6759 - 45067 Orl\'eans cedex
  2, France, \texttt{magali.lecureux-mercier@univ-orleans.fr}}

\maketitle

\begin{abstract}
  We present a new class of macroscopic models for pedestrian
  flows. Each individual is assumed to move towards a fixed target,
  deviating from the best path according to the instantaneous crowd
  distribution. The resulting equation is a conservation law with a
  nonlocal flux. Each equation in this class generates a Lipschitz
  semigroup of solutions and is stable with respect to the functions
  and parameters defining it. Moreover, key qualitative properties
  such as the boundedness of the crowd density are proved. Specific
  models are presented and their qualitative properties are shown
  through numerical integrations.

  \medskip

  \noindent\textit{2000~Mathematics Subject Classification:} 35L65,
  90B20.

  \medskip

  \noindent\textit{Keywords:} Crowd Dynamics, Macroscopic Pedestrian
  Model, Non-Local Conservation Laws.
\end{abstract}

\section{Introduction}
\label{sec:I}

\par From a macroscopic point of view, a moving crowd is described by
its density $\rho = \rho(t,x)$, so that for any subset $A$ of the
plane, the quantity $\int_A \rho(t,x) \d{x}$ is the total number of
individuals in $A$ at time $t$. In standard situations, the number of
individuals is constant, so that conservation laws of the type
$\partial_t \rho + \div_x (\rho\, {\mathbf{v}} ) = 0$ are the natural
tool for the description of crowd dynamics. A key issue is the choice
of the speed $\mathbf{v}$, which should describe not only the target
of the pedestrians and the modulus of their speed, but also their
attitude to adapt their path choice to the crowd density they estimate
to find along this path.

Our starting point is the following Cauchy problem for the
conservation law
\begin{equation}
  \label{eq:General}
  \left\{
    \begin{array}{l}
      \partial_t \rho
      +
      \div
      \left(
        \rho\, v (\rho) \, \left(\nu (x) + \mathcal{I} (\rho)\right)
      \right)
      = 0\,,
      \\
      \rho(0,x) = \rho_0 (x)\,.
    \end{array}
  \right.
\end{equation}
The scalar function $\rho \mapsto v(\rho)$ describes the modulus of
the pedestrians' speed, independently from geometrical considerations.
In other words, an individual at time $t$ and position $x \in
\reali^N$ moves at the speed $v\left(\rho (t,x)\right)$ that depends
on the density $\rho (t,x)$ evaluated at the same time $t$ and
position $x$. Given that the density is $\rho$, the vector $\nu (x) +
\mathcal{I} (\rho)$ describes the direction that the individual
located at $x$ follows and has norm (approximately) $1$. More
precisely, the individual at position $x$ and time $t$ is assumed to
move in the direction $\nu(x) + \left( \mathcal{I} \left( \rho(t)
  \right) \right) (x)$.

In situations like the evacuation of a closed space $\Omega \subset
\reali^N$, it is natural to assume that the first choice of each
pedestrian is to follow a path optimal with respect to the visible
geometry, for instance the geodesic. As soon as walls or obstacles are
relevant, it is necessary to take into consideration the discomfort
felt by pedestrians walking along walls or too near to corners, see
for instance~\cite{HelbingJohansson, HoogendoornBovy2002} and the
references therein.

The vector $\left( \mathcal{I} \left( \rho (t) \right) \right) (x)$
describes the deviation from the direction $\nu(x)$ due to the density
distribution $\rho (t)$ at time $t$. Hence, the operator $\mathcal{I}$
is in general \emph{nonlocal}, so that $\left(\mathcal{I}\left(\rho
    (t)\right)\right) (x)$ depends on all the values of the density
$\rho (t)$ at time $t$ in a neighborhood of $x$.  More formally, it
depends on all the function $\rho (t) \in \L1 (\reali^N;[0,R])$ and
not only on the value $\rho (t,x) \in [0,R]$. The case in which
$\mathcal{I} = 0$ is equivalent to assume that the paths followed by
the individuals are chosen \emph{a priori}, independently from the
dynamics of the crowd.

Here we present two specific choices that fit in~\eqref{eq:General}. A
first criterion assumes that each individual aims at avoiding high
crowd densities. Fix a mollifier $\eta$. Then, the convolution
$(\rho*\eta)$ is an average of the crowd density around $x$. This
leads to the natural choice
\begin{equation}
  \label{eq:IGood}
  \mathcal{I} (\rho)
  =
  - \epsilon \,
  \frac{\nabla (\rho*\eta)}{\sqrt{1+\norma{\nabla(\rho*\eta)}^2}} \,,
\end{equation}
related to~\cite{BressanColomboPedoni}, which states that individuals
deviate from the optimal path trying to avoid entering regions with
higher densities. Through numerical integrations, below we provide
examples of solutions to~\eqref{eq:General}--\eqref{eq:IGood}. They
show the interesting phenomenon of \emph{pattern formation}. In the
case of a crowd walking along a corridor, coherently with the
experimental observation described in the literature, see for
instance~\cite{HelbingEtAl2002, HelbingJohansson2007,
  HoogendoornBovy2003, PiccoliTosin2009}, the solution
to~\eqref{eq:General}--\eqref{eq:IGood} self-organizes into lanes. The
width of these lanes depends on the size of the support of the
averaging kernel $\eta$. This feature is stable with respect to strong
variations in the initial datum and also in the geometry. Indeed, we
have lane formation also in the case of the evacuation of a room, when
the crowd density sharply increases in front of the
door. Section~\ref{subs:LaneFormation} is devoted to this property.

From the analytical point of view, we note that the convolution term
in~\eqref{eq:IGood} seems not sufficient to regularize
solutions. Indeed, the present analytical framework is devised to
consider solutions in $\L1 \cap \BV$. Both in the case of a crush in
front of an exit (Section~\ref{subs:Evacuation}) and in the specific
example in Section~\ref{subs:Rise}, numerical simulations highlight
that the space gradient of $\rho$ increases dramatically.

According to~\eqref{eq:IGood}, pedestrians evaluate the crowd density
all around their position. When restrictions on the angle of vision
are relevant, the following choice is reasonable:
\begin{equation}
  \label{eq:IPT}
  \mathcal{I} (\rho)
  =
  -\epsilon\,
  \frac{\nabla\,\int_{\reali^N} \rho(y) \,
    \eta(x-y)  \,
    \phi\!\left((y-x) \cdot g(x)\right)
    \d{y}}
  {\sqrt{1+\norma{\nabla\,\int_{\reali^N} \rho(y) \,
        \eta(x-y)  \,
        \phi\!\left((y-x) \cdot g(x)\right)
        \d{y}}^2}} \,.
\end{equation}
Here, $\eta$ is a fixed mollifier as above and the smooth function
$\phi$ weights the deviation from the preferred direction $g (x)$.

We note that the constructions in~\cite{Hughes1, Hughes2}
and~\cite{DiFrancesco} fit in the present setting. Indeed, there the
following choices were considered:
\begin{displaymath}
  \left\{
    \begin{array}{l}
      \partial_t \rho-\div \left(\rho \, f^2(\rho) \, \nabla\phi\right)=0
      \\
      \modulo{\nabla\phi}= 1/f(\rho)
    \end{array}
  \right.
  \qquad \mbox{ and } \qquad
  \left\{
    \begin{array}{l}
      \partial_t \rho-\div \left(\rho \, f^2(\rho) \, \nabla\phi\right)=0
      \\
      -\epsilon \Delta \phi
      + \modulo{\nabla \phi}^2
      = 1/\left(f(\rho)+\epsilon\right)^2 \,.
    \end{array}
  \right.
\end{displaymath}
The former admits an immediate interpretation: the direction $\nabla
\phi$ of the speed $\mathbf{v}$ is chosen along the solutions to the
eikonal equations, i.e.~all pedestrians follow the shortest path,
weighing at every instant the length of paths with the amount of
people that are moving along it. In the former case, $\nu (x)=0$ and
$\mathcal{I} (\rho)$ is the gradient of the solution to the eikonal
equation with $1/f (\rho)$ in the right hand side, while in the latter
case $\nu (x)=0$ and $\mathcal{I} (\rho)$ is the gradient of the
solution to an elliptic partial differential equation.

The model introduced in~\cite{CristianiPiccoliTosin, PiccoliTosin}
relies on this measure valued conservation law:
\begin{displaymath}
  \partial_t \mu +\div (\mu \, v) = 0
  \quad \mbox{ where } \quad
  v
  =
  \nu(x)
  +
  \int_{\reali^2} f(\modulo{x-y}) \,
  \phi\left( (y-x) \cdot \nu(x) \right) \frac{x-y}{\norma{x-y}}\d\mu_t(y)\,.
\end{displaymath}
Here the unknown is a map $\mu \colon [0,T] \to \mathcal{M}
(\reali^N;\reali)$ assigning at every time $t$ a positive measure $\mu
(t)$ which substitutes the crowd density, in the sense that the amount
of people that at time $t$ are in $A$ is $\left(\mu(t)\right)
(A)$. Contrary to this model, the present framework is the space of
$\L1$ densities and physical \emph{a priori} $\L\infty$ bounds on the
solutions to~\eqref{eq:General} are rigorously proved, preventing any
focusing effect as well as the rise of any Dirac delta. Moreover,
below we prove global in time existence of solutions, their continuous
dependence from the initial data and their stability with respect to
variations in the speed law $\mathbf{v}$.

In~\cite{CosciaCanavesio}, the geometric part $\nu$ of the speed is
chosen \emph{a priori}, while its modulus depend on the density as
well as on the gradient of the density:
\begin{displaymath}
  \partial_t\rho
  +
  \div \left(\rho \, \phi(\rho,\nabla \rho) \, \nu(x)\right)
  =
  0 \,.
\end{displaymath}
On the contrary, the model introduced in~\cite{ColomboHertyMercier}
postulates a nonlocal dependence of the speed from the density:
\begin{displaymath}
  \partial_t \rho
  +
  \div \left(\rho \, v (\rho*\eta) \, \nu (x)\right)
  =
  0 \,,
\end{displaymath}
which amounts to assume that pedestrians choose their behavior
according to evaluations of an \emph{average} of the density around
their position, rather than according to the density at their place.
The following second order model was presented in~\cite{BellomoDogbe,
  Dogbe_Cauchy} and does not fit in~\eqref{eq:General}:
\begin{displaymath}
  \left\{
    \begin{array}{l}
      \partial_t \rho +\div (\rho v) = 0
      \\
      \partial_t v+ (v\cdot \nabla ) v = F(\rho,\nabla\rho,  v) \,,
    \end{array}
  \right.
\end{displaymath}
we refer to the review~\cite{BellomoDogbe_review} for further
details. To underline the variety of analytical techniques with which
crowd dynamics has been tackled, we recall the further approaches:
optimal transport in~\cite{ButtazzoJimenezOudet}, the mean field limit
in~\cite{Dogbe_MeanField}, the functional analytic one
in~\cite{MauryChupin, MauryChupinVenel} and the nonclassical shocks
used in~\cite{Chalons, ColomboRosini2005, ColomboRosini}.

Throughout this work, we set all statements in all $\reali^N$. In
Section~\ref{sec:Des} we show how the present framework is able to
take into consideration the presence of various constraints. In the
case of pedestrian dynamics, for instance, this amounts to prove that
no individual passes through the walls of a given room, provided the
initial datum is assigned inside it. Concerning the dimension, our
main applications are referred to the case of crowd dynamics,
i.e.~$N=2$. Nevertheless, from the analytical point of view,
considering the case of a general $N$ does not add any
difficulty. Furthermore, we believe that the present setting can be
reasonably applied also to, say, fishes and birds moving in
$\reali^3$, for example in predator-prey like situations as described
in~\cite{ColomboMercier}.

The next section is devoted to the analytical properties
of~\eqref{eq:General}: well posedness and stability. The general
theory is particularized to specific examples in
Section~\ref{sec:Des}, where the presence of obstacle (walls) is
considered. Sample numerical integrations are provided in
Section~\ref{sec:NR}. The final Section~\ref{sec:TD} collects the
analytical proofs. We defer to Appendix~\ref{app:G} further remarks
related to the geometry of the physical domain.

\section{Analytical Results}
\label{sec:AR}

In the following, $N\in \naturali \setminus \{0\}$ is the (fixed)
space dimension. We denote $\reali^+ = \left[0, +\infty\right[\,$; the
open ball in $\reali^N$ centered at $x$ and with radius $r>0$ is
$B(x,r)$ and we let $W_N = \int_0^{\pi/2} (\cos \theta)^N\,
\d{\theta}$. As usual, we denote $S^{N-1} = \left\{x\in \reali^N
  \colon \norma{x}=1\right\}$.

The density $\rho$ can be defined as the fraction of space occupied by
pedestrians, so that $\rho$ turns out to be a nondimensional scalar in
$[0,1]$. Otherwise, it can be useful to think at $\rho$ as measured
in, say, $\mbox{individuals}/m^2$ and varying in $[0,R]$, with $R>0$
being a given maximal density, for example $R = 8\;
\mbox{individuals}/m^2$.

Our first step in the study of~\eqref{eq:General} is the formal
definition of solution.
\begin{definition}
  \label{def:WS}
  Fix a positive $T$ and an initial datum $\rho_0 \in \L1
  (\reali^N;[0,R])$. A function $\rho \in \C0\left([0,T];\L1
    (\reali^N;\reali)\right)$ is a \emph{weak entropy solution}
  to~\eqref{eq:General} if it is a Kru\v zkov solution to the Cauchy
  problem for the scalar conservation law
  \begin{equation}
    \label{eq:Kruzkov}
    \left\{
      \begin{array}{l}
        \partial_t \rho + \div \left( \rho\, v (\rho) \, w(t,x) \right) =0
        \\
        \rho(0,x) = \rho_0 (x)
      \end{array}
    \right.
  \end{equation}
  where $w(t,x) = \nu (x) + \left(\mathcal{I} \left( \rho(t) \right)
  \right) (x)$.
\end{definition}

\noindent In other words, recalling~\cite[Definition~1]{Kruzkov}, for
all $k \in \reali$, for all $\phi \in \Cc\infty(\left]-\infty,T
\right] \times \reali^N; \reali^+)$,
\begin{eqnarray*}
  \!\!\!\!\!\!\!\!
  & &
  \int_0^T \!\!\! \int_{\reali^N}\!
  \left[
    \modulo{\rho-k} \, \partial_t\phi
    +
    \left[
      \left(\rho \, v(\rho) - k \, v(k)\right) \, w(t,x) \cdot \nabla\phi
      -
      k\,v(k)\div w(t,x) \, \phi
    \right]
    \sgn(\rho-k)
  \right] \d{x}\d{t}
  \\
  \!\!\!\!\!\!\!\!
  & &
  + \int_{\reali^N} \modulo{\rho_0 (x)-k} \, \phi (0,x) \d{x}
  \geq 0 .
\end{eqnarray*}

\noindent On the functions defining the general
model~\eqref{eq:General}, we introduce the following hypotheses:
\begin{description}
\item[(v)] $v \in \C2\left(\reali;\reali\right)$ is non increasing, $v
  (0) = V$ and $v (R)=0$ for fixed $V,R>0$.
\item[($\boldsymbol{\nu}$)] $\nu \in (\C2 \cap \W1\infty) (\reali^N;
  \reali^N)$ is such that $\div \nu \in (\W11 \cap \W1\infty)
  (\reali^N; \reali)$.
\item[(I)] $\mathcal{I} \in \C0\left(\L1(\reali^N;[0,R]);
    \C2(\reali^N;\reali^N)\right)$ satisfies the following estimates:
  \begin{enumerate}[\bf({I}.1)]
  \item There exists an increasing $C_I \in \Lloc\infty(\rpic; \rpic)$
    such that, for all $r \in \L1(\reali^N;[0,R])$,
    \begin{eqnarray*}
      \norma{\mathcal{I}(r)}_{\W1\infty}
      \leq
      C_I(\norma{r}_{\L1})
      \quad \mbox{ and } \quad
      \norma{\mathcal{\div I}(r)}_{\L1}
      \leq
      C_I(\norma{r}_{\L1})\,.
    \end{eqnarray*}
  \item There exists an increasing $C_I \in \Lloc\infty(\rpic; \rpic)$
    such that, for all $r \in \L1(\reali^N;[0,R])$,
    \begin{displaymath}
      \norma{\nabla \div  \mathcal{I}(r)}_{\L1}
      \leq
      C_I(\norma{r}_{\L1})\,.
    \end{displaymath}
  \item There exists a constant $K_I$ such that for all $r_1, r_2\in
    \L1(\reali^N; [0,R])$,
    \begin{eqnarray*}
      \norma{\mathcal{I}(r_1) - \mathcal{I}(r_2)}_{\L\infty}
      & \leq &
      K_I \cdot \norma{r_1-r_2}_{\L1} \,,
      \\
      \norma{\mathcal{I}(r_1) - \mathcal{I}(r_2)}_{\L1}
      +
      \norma{\div(\mathcal{I}(r_1 ) - \mathcal{I}(r_2))}_{\L1}
      &\leq &
      K_I \cdot \norma{r_1-r_2}_{\L1} \,.
    \end{eqnarray*}
  \end{enumerate}
\end{description}

\noindent Furthermore, throughout we denote by $q$ the map $q \colon [0,R] \mapsto \reali$ defined by $q (\rho) = \rho \, v (\rho)$.

As a first justification of these conditions, note that they make
Definition~\ref{def:WS} acceptable.

\begin{lemma}
  \label{lem:Kruzkov}
  Fix a positive $T$.  Let \textbf{(v)}, \textbf{($\boldsymbol{\nu}$)}
  and~\textbf{(I.1)} hold. Choose an arbitrary density $r \in \C0
  \left([0,T]; \L1 (\reali^N;[0,R])\right)$.  Then, the Cauchy problem
  \begin{equation}
    \label{eq:K}
    \left\{
      \begin{array}{l}
        \partial_t \rho + \div \left( \rho\, v (\rho) \, w(t,x) \right) =0
        \\
        \rho(0,x) = \rho_0 (x)
      \end{array}
    \right.
    \qquad \mbox{ with } \qquad
    w(t,x) = \nu
    (x) + \left(\mathcal{I} \left(r(t)\right)\right) (x)
  \end{equation}
  satisfies the assumptions of Kru\v zkov
  Theorem~\cite[Theorem~5]{Kruzkov}.
\end{lemma}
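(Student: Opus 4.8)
The plan is to cast \eqref{eq:K} in the form $\partial_t \rho + \div F(t,x,\rho) = 0$ to which Kružkov Theorem~5 applies, with flux $F(t,x,\rho) = q(\rho)\, w(t,x)$, $q(\rho) = \rho\, v(\rho)$ and $w(t,x) = \nu(x) + \left(\mathcal{I}(r(t))\right)(x)$, and then to verify one by one the hypotheses that Kružkov imposes: continuity of $F$, $\partial_\rho F$, $\partial_{x_i} F_i$ and $\partial^2_{x_i\rho} F_i$, together with the attendant local-in-$\rho$ bounds and the integrability in $x$ of the flux divergence at frozen $\rho$. These conditions factor into a statement about $q$ and a statement about $w$, so the real content is to produce the estimates on $w$. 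Since Kružkov's bounds are asked only on bounded $\rho$-ranges, we may freely replace $q$ outside a large interval by a globally Lipschitz $\C2$ extension; this alters nothing on the physical range $[0,R]$ and is the usual device for fitting the problem into Kružkov's setting.

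First I would record the regularity and the uniform bounds of $w$. By \textbf{($\boldsymbol{\nu}$)} the geometric part $\nu \in \C2 \cap \W1\infty$ has $\nu, \nabla\nu, \div\nu$ bounded and $\div\nu \in \L1$. For the nonlocal part, the compactness of $[0,T]$ and the continuity of $r$ make $t \mapsto \norma{r(t)}_{\L1}$ bounded by some $M$; as $C_I$ is increasing, \textbf{(I.1)} upgrades to the uniform bounds $\norma{\mathcal{I}(r(t))}_{\W1\infty} \leq C_I(M)$ and $\norma{\div\mathcal{I}(r(t))}_{\L1} \leq C_I(M)$ for all $t \in [0,T]$. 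Composing the continuity of $\mathcal{I} \colon \L1 \to \C2$ from \textbf{(I)} with that of $r$ yields $w \in \C0([0,T]; \C2(\reali^N;\reali^N))$. Summarising, $w(t,\cdot) \in \C2$ for every $t$, with $w, \nabla w$ bounded in $\L\infty$ and $\div w \in \L1 \cap \L\infty$, all uniformly in $t$.

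Next I would transfer these facts to $F$. From \textbf{(v)}, $v \in \C2(\reali;\reali)$, hence $q \in \C2(\reali;\reali)$ and $q, q'$ are bounded on each $[-M,M]$. The chain rule gives $\partial_\rho F = q'(\rho)\, w$, $\div_x F = q(\rho)\,\div w$ at frozen $\rho$, and $\partial^2_{x_i\rho}F_i = q'(\rho)\, \partial_{x_i}w_i$, all jointly continuous by the regularity above. The local-in-$\rho$ bounds then follow at once: for $\modulo{\rho}\leq M$, both $\partial_\rho F$ and $\partial^2_{x_i\rho}F_i$ are bounded in $\L\infty$ through $\norma{w}_{\W1\infty}$, while the effective source $q(\rho)\,\div w$ generated by the $x$-dependence of the flux is controlled in $\L1$ by $\sup_{\modulo{\rho}\leq M}\modulo{q(\rho)}\cdot\norma{\div w}_{\L1}$. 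This is precisely the integrability that renders the entropy inequality of Definition~\ref{def:WS} meaningful.

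The one point to watch is that every constant be uniform in $t$: each estimate above is taken at fixed time, and it is the compactness of $[0,T]$ together with the monotonicity and local boundedness of $C_I$ that turn the pointwise-in-$t$ content of \textbf{(I.1)} into bounds holding on the whole strip $[0,T]\times\reali^N$, as Kružkov's hypotheses require. I expect this bookkeeping, rather than any single estimate, to be the crux. Notably, no higher regularity of $\mathcal{I}$ enters: Kružkov's theorem involves only first $x$-derivatives of the flux, so the $\L1$ bound on $\div w$ suffices and neither \textbf{(I.2)} nor \textbf{(I.3)} is needed.
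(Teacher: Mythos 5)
Your proposal is correct and takes essentially the same route as the paper's proof: both write the flux as $f(t,x,\rho)=q(\rho)\,w(t,x)$ and check Kru\v zkov's hypotheses by computing $\partial_\rho f = q'(\rho)\,w$ and $\div f = q(\rho)\,\div w$ and bounding them on $[0,T]\times\reali^N\times[-M,M]$ via \textbf{(v)}, \textbf{($\boldsymbol{\nu}$)}, \textbf{(I.1)} and the boundedness of $t\mapsto\norma{r(t)}_{\L1}$ on the compact interval $[0,T]$. The paper's version is simply more terse; your additional bookkeeping (joint continuity, uniformity in $t$, the extension device for $q$) is sound but adds nothing beyond what the paper's two displayed estimates already encode.
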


This lemma is proved in Section~\ref{sec:TD} below. In
Section~\ref{sec:Des} we show that the above assumption~\textbf{(I)}
allows to comprehend physically reasonable cases.

The next result is devoted to the proof of existence and uniqueness
for~\eqref{eq:General}. It is obtained through Banach Fixed Point
Theorem.

\begin{theorem}
  \label{thm:existence}
  Let \textbf{(v)}, \textbf{($\boldsymbol{\nu}$)} and~\textbf{(I)}
  hold. Choose any $\rho_0 \in (\L1 \cap \BV) (\reali^N;[0,R])$. Then,
  there exists a unique weak entropy solution $\rho \in
  \C0 \left(\rpic; \L1(\reali^N; [0,R]) \right)$ to~(\ref{eq:General}). Moreover,
  $\rho$ satisfies the bounds
  \begin{eqnarray*}
    \norma{\rho(t)}_{\L1}
    & = &
    \norma{\rho_0}_{\L1}\,,  \mbox{ for a.e. } t\in \rpic\,,
    \\
    \tv(\rho(t))
    & \leq &
    \tv(\rho_0) \, e^{k t}
    % \\
    % &
    + te^{k t} NW_N \norma{q}_{\L\infty([0,R])}
    \left(
      \norma{\nabla\div \nu}_{\L1}
      +
      C_I(\norma{\rho_0}_{\L1})
    \right)\,,
  \end{eqnarray*}
  where $k= (2N+1) \norma{q'}_{\L\infty([0,R])} \left( \norma{\nabla
      \nu}_{\L\infty} + C_I(\norma{\rho_0}_{\L1}) \right)$.
\end{theorem}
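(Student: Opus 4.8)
The plan is to obtain the solution as a fixed point of a map $\mathcal{Q}$ that, to a given density $r \in \C0([0,T];\L1(\reali^N;[0,R]))$, associates the Kru\v zkov solution $\rho = \mathcal{Q}(r)$ to the \emph{linear} (in the sense of frozen velocity) problem~\eqref{eq:K}. By Lemma~\ref{lem:Kruzkov}, this map is well defined, and the strategy is to prove it admits a unique fixed point, which by Definition~\ref{def:WS} is precisely the weak entropy solution sought. First I would record the a priori bounds that make the construction self-contained: the $\L1$ norm is conserved because the equation is in divergence form (so $\norma{\rho(t)}_{\L1} = \norma{\rho_0}_{\L1}$), and a maximum-principle argument using $v(R)=0$ from~\textbf{(v)} keeps $\rho(t)$ valued in $[0,R]$, so the iterates never leave the relevant function space.

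The core estimates are two. The first is a total-variation bound for the frozen problem: differentiating the conservation law and tracking how $\tv(\rho(t))$ evolves under a flux of the form $\rho\,v(\rho)\,w(t,x)$ with a space-dependent velocity $w$, one gets a Gronwall-type inequality whose growth rate $k$ is controlled by $\norma{q'}_{\L\infty}$ times the Lipschitz size of $w$, and whose source term comes from $\div w$ and $\nabla\div w$. Feeding in $w = \nu + \mathcal{I}(r)$ and bounding $\norma{\nabla\nu}_{\L\infty} + C_I(\norma{r}_{\L1})$ via~\textbf{($\boldsymbol{\nu}$)} and~\textbf{(I.1)}--\textbf{(I.2)}, and using the isoperimetric/geometric constant $NW_N$ that appears when passing from $\W1\infty$ control of the velocity to TV control of the density, yields exactly the stated bound with $k = (2N+1)\norma{q'}_{\L\infty}(\norma{\nabla\nu}_{\L\infty} + C_I(\norma{\rho_0}_{\L1}))$. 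The second estimate is the contraction/stability one: given two input densities $r_1, r_2$, the difference of the corresponding solutions is controlled in $\L1$ by the difference of the velocity fields, for which one invokes the Lipschitz hypothesis~\textbf{(I.3)}. Standard Kru\v zkov-type stability with respect to the flux gives
\begin{displaymath}
  \norma{\mathcal{Q}(r_1)(t) - \mathcal{Q}(r_2)(t)}_{\L1}
  \leq
  C(t) \int_0^t \norma{r_1(s) - r_2(s)}_{\L1} \d{s}\,,
\end{displaymath}
where $C(t)$ grows with $t$, the TV bound above, and $K_I$.

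From here the argument is routine: on a sufficiently short time interval $[0,\tau]$, the factor $C(\tau)\,\tau$ is strictly less than $1$, so $\mathcal{Q}$ is a contraction on $\C0([0,\tau];\L1(\reali^N;[0,R]))$ and Banach's theorem furnishes a unique fixed point. Because the a priori $\L1$ and $\L\infty$ bounds are uniform and the TV bound grows only exponentially, $\tau$ can be chosen uniformly and the local solution is prolonged by iteration to cover all of $\rpic$, yielding global existence; uniqueness propagates in the same way by gluing. The main obstacle I anticipate is the total-variation estimate for the frozen problem: deriving a clean Gronwall inequality for $\tv(\rho(t))$ when the transport velocity $w$ depends on $x$ requires carefully separating the contribution that merely stretches existing variation (producing the exponential factor $e^{kt}$) from the contribution that \emph{creates} new variation through $\div w$ and $\nabla\div w$ (producing the linear-in-$t$ source term), and it is precisely this computation — rather than the fixed-point mechanics — that forces the specific hypotheses~\textbf{(I.1)}--\textbf{(I.2)} on $\norma{\mathcal{I}(r)}_{\W1\infty}$, $\norma{\div\mathcal{I}(r)}_{\L1}$ and $\norma{\nabla\div\mathcal{I}(r)}_{\L1}$ and produces the constants $NW_N$ and $(2N+1)$ appearing in the statement.
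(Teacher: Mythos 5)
Your outline reproduces the paper's strategy almost exactly: freeze the nonlocal velocity at a given $r$, solve the resulting Kru\v zkov problem (this is Lemma~\ref{lem:Kruzkov} together with Lemma~\ref{lem:loc}, which supplies the $\L1$ conservation, the $[0,R]$ bound by comparison with the constant solutions $0$ and $R$, continuity in time, and the TV estimate~\eqref{est:bv} — obtained in the paper by invoking \cite[Theorem~2.2]{MercierStability} rather than by direct differentiation), then set up the map $\mathcal{Q}$ on $X(T)=\left\{r\in\C0\left([0,T];\L1(\reali^N;[0,R])\right)\colon \norma{r(t)}_{\L1}\leq\norma{\rho_0}_{\L1}\right\}$ and prove contraction for small $T$ using~\textbf{(I.3)} through a Kru\v zkov-type flux-stability estimate (in the paper, \cite[Theorem~2.6]{MercierStability}). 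Up to the local fixed point, your proposal and the paper's proof coincide.

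The gap is in the prolongation step. You assert that ``$\tau$ can be chosen uniformly'' because the $\L1$ and $\L\infty$ bounds are uniform and TV grows only exponentially. This is false: the contraction factor is not of the form $C_1\tau$ with $C_1$ universal, but rather $C_1\,\tau\left(1+e^{k\tau}\left(\tv(\bar\rho)+C_2\tau\right)\right)$, where $\bar\rho$ is the datum at the start of the current interval. Since by~\eqref{est:bv} the total variation may grow like $e^{kt}$, the admissible step length shrinks (roughly like $e^{-kT_n}$) as the construction advances, so uniformity is exactly what fails — and this is the genuinely delicate point of a global-in-time statement. The paper repairs it by defining the restart times $T_n$ recursively through the contraction condition, feeding the TV bound at $T_n$ into the condition for $T_{n+1}$, and then proving $T_n\to\infty$: if $(T_n)$ were bounded, then $T_{n+1}-T_n\to0$ while the bracket stays bounded, and passing to the limit in the defining relation would give $0=\frac12$, a contradiction. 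Alternatively, the integral-form estimate you wrote down, if established, yields a cleaner fix that avoids restarting altogether: since the constant $C(T)$ there is uniform over $r\in X(T)$ for each fixed $T$ (the TV bounds of $\mathcal{Q}(r)$ depend on $\norma{r}_{\L\infty([0,T];\L1)}\leq\norma{\rho_0}_{\L1}$ only), iteration gives $\norma{\mathcal{Q}^n r_1-\mathcal{Q}^n r_2}_{\L\infty([0,T];\L1)}\leq \frac{\left(C(T)\,T\right)^n}{n!}\norma{r_1-r_2}_{\L\infty([0,T];\L1)}$, so some power of $\mathcal{Q}$ is a contraction on $X(T)$ for \emph{arbitrary} $T$. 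Either argument must be supplied; as written, the passage from local to global existence is unsupported.
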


Using the techniques in~\cite{ColomboHertyMercier,
  ColomboMercierRosini}, we now obtain the continuous dependence of
the solution to~\eqref{eq:General} from the initial datum and its
stability with respect to $v$, $\nu$ and $\mathcal{I}$ in the natural
norms.

\begin{theorem}
  \label{thm:stability}
  Let \textbf{(v)}, \textbf{($\boldsymbol{\nu}$)} and~\textbf{(I)} be
  satisfied by both systems
  \begin{displaymath}\label{eq:2eq}
    \left\{
      \begin{array}{@{}l@{}}
        \partial_t \rho
        +
        \div
        \left[
          \rho\, v_1 (\rho) \, \left(\nu_1 (x) + \mathcal{I}_1 (\rho)\right)
        \right]
        = 0
        \\
        \rho(0,x) = \rho_{0,1} (x)
      \end{array}
    \right.
    \quad \mbox{ and } \quad
    \left\{
      \begin{array}{@{}l@{}}
        \partial_t \rho
        +
        \div
        \left[
          \rho\, v_2 (\rho) \, \left(\nu_2 (x) + \mathcal{I}_2 (\rho)\right)
        \right]
        = 0
        \\
        \rho(0,x) = \rho_{0,2} (x)
      \end{array}
    \right.
  \end{displaymath}
  with $\rho_{0,1},\rho_{0,2} \in (\L1 \cap \BV)
  (\reali^N;[0,R])$. Then, the two solutions $\rho_1$ and $\rho_2$
  satisfy
  \begin{eqnarray*}
    \norma{\rho_1 (t) - \rho_2 (t)}_{\L1}
    & \leq &
    C (t) \big(
    \norma{\rho_{0,1}-\rho_{0,2}}_{\L1}
    % \\
    % & &
    +
    \norma{q_1 - q_2}_{\W1\infty}
    \\
    & &
    \qquad\quad
    +
    \norma{\nu_1 - \nu_2}_{\L\infty}  +
    \norma{\div(\nu_1 - \nu_2)}_{\L1}
    % \\
    % & &
    +
    d\left(\mathcal{I}_1,\mathcal{I}_2\right)
    \big)
  \end{eqnarray*}
  where $\displaystyle d (\mathcal{I}_1, \mathcal{I}_2) = \sup \left\{
    \norma{\mathcal{I}_1 (\rho) -\mathcal{I}_2(\rho)}_{\L\infty}+
    \norma{\div \mathcal{I}_1 (\rho) -\div\mathcal{I}_2(\rho)}_{\L1}
    \colon \rho \in \L1 (\reali^N;[0,R]) \right\}$, the map $C \in \C0
  (\reali^+;\reali^+)$ vanishes at $t=0$ and depends on
  $\tv(\rho_{0,1})$, $\norma{\rho_{0,1}}_{\L1}$,
  $\norma{\nu_1}_{\L\infty}$, $\norma{\div \nu_1}_{\W11}$,
  $\norma{q_1}_{\W1\infty}$, $\norma{q_2}_{\W1\infty}$.
\end{theorem}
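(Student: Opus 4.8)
The plan is to prove the stability estimate of Theorem~\ref{thm:stability} by combining Kru\v{z}kov's doubling-of-variables technique with the a priori bounds from Theorem~\ref{thm:existence}. The essential difficulty is that both $\rho_1$ and $\rho_2$ are nonlocal solutions: each solves a Kru\v{z}kov equation with a \emph{different} transport field $w_i(t,x) = \nu_i(x) + \left(\mathcal{I}_i(\rho_i(t))\right)(x)$, and these fields depend on the unknowns themselves. So the comparison cannot be done by a direct application of any off-the-shelf stability result; instead one must track how differences in the data propagate both through the flux $q_i$ and through the velocity fields, including the self-referential nonlocal dependence.

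First I would set up the standard Kru\v{z}kov estimate comparing a solution of a balance law to a solution of a perturbed balance law. The cleanest route is to use a known stability lemma for conservation laws of the form $\partial_t \rho + \div\left(q(\rho)\,w(t,x)\right)=0$ with respect to perturbations in both $q$ and $w$ (this is precisely the type of estimate developed in~\cite{ColomboHertyMercier, ColomboMercierRosini}, which the authors cite). Applying such a lemma with flux $q_1$, field $w_1$ for $\rho_1$ and flux $q_2$, field $w_2$ for $\rho_2$ yields, via Gr\"onwall, a bound of the shape
\begin{displaymath}
  \norma{\rho_1(t)-\rho_2(t)}_{\L1}
  \leq
  C(t)\left(
    \norma{\rho_{0,1}-\rho_{0,2}}_{\L1}
    + \norma{q_1-q_2}_{\W1\infty}
    + \int_0^t \norma{w_1(s)-w_2(s)}_{\ast}\,\d{s}
  \right),
\end{displaymath}
where the constant in front absorbs $\tv(\rho_{0,1})$, $\norma{\rho_{0,1}}_{\L1}$ and the various norms of $q_1,q_2,\nu_1$ listed in the statement (the $\BV$ bound on $\rho_1$ from Theorem~\ref{thm:existence} is what makes the flux-difference and field-difference terms integrable against the gradient of $\rho_1$), and $\norma{\cdot}_\ast$ denotes the combination of $\L\infty$ and $\div$-in-$\L1$ norms that appears in the hypotheses.

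The key step is then to estimate $\norma{w_1(s)-w_2(s)}_\ast$. Writing
\begin{displaymath}
  w_1-w_2
  =
  (\nu_1-\nu_2)
  + \left(\mathcal{I}_1(\rho_1)-\mathcal{I}_2(\rho_1)\right)
  + \left(\mathcal{I}_2(\rho_1)-\mathcal{I}_2(\rho_2)\right),
\end{displaymath}
the first term contributes $\norma{\nu_1-\nu_2}_{\L\infty}+\norma{\div(\nu_1-\nu_2)}_{\L1}$, the second is bounded by the distance $d(\mathcal{I}_1,\mathcal{I}_2)$ by its very definition, and the third is controlled by $K_I\,\norma{\rho_1(s)-\rho_2(s)}_{\L1}$ using the Lipschitz estimate~\textbf{(I.3)} for $\mathcal{I}_2$. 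This last term is the crucial one: it reintroduces the unknown difference under the time integral, so substituting back gives an integral inequality
\begin{displaymath}
  \norma{\rho_1(t)-\rho_2(t)}_{\L1}
  \leq
  C(t)\left(\text{data differences}\right)
  + C(t)\,K_I \int_0^t \norma{\rho_1(s)-\rho_2(s)}_{\L1}\,\d{s},
\end{displaymath}
and a final application of Gr\"onwall's lemma closes the loop and absorbs the $K_I$-term into the constant $C(t)$. I expect the main obstacle to be verifying carefully that the Kru\v{z}kov comparison lemma applies with the required dependence of its constant only on $\rho_1$-side quantities (so that the estimate is not symmetric in a way that would force unavailable bounds on $\rho_2$), and in checking that the regularity furnished by hypotheses~\textbf{(I.1)}--\textbf{(I.3)} and~\textbf{($\boldsymbol{\nu}$)} is exactly what is needed for the flux and field to be admissible perturbations; the continuity of $C$ with $C(0)=0$ follows from the explicit Gr\"onwall constants once the linear-in-$t$ structure of the source terms is made explicit.
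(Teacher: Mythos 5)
Your proposal is correct and follows essentially the same route as the paper: the authors likewise invoke an off-the-shelf stability result for Kru\v{z}kov solutions under flux perturbations (from the Mercier--Colombo line of work they cite), use exactly your three-term decomposition $w_1-w_2=(\nu_1-\nu_2)+\left(\mathcal{I}_1(\rho_1)-\mathcal{I}_2(\rho_1)\right)+\left(\mathcal{I}_2(\rho_1)-\mathcal{I}_2(\rho_2)\right)$ with \textbf{(I.3)} handling the last term, and close the resulting integral inequality by Gronwall, with constants depending only on the $\rho_1$-side quantities as you anticipated.
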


Thanks to these stability results, several control problems can be
proved to admit a solution through a direct application of
Weierstra\ss~Theorem. A possible standard application could be the
minimization of the evacuation time from a given room. Describing the
actions of a controller able to determine the initial pedestrians'
distribution and/or their preferred paths, one is lead to an optimal
control problem with the initial datum $\rho_0$ and the vector field
$\nu$ as control parameters, for instance. Without any loss of
generality, the compact sets on which the optimization is made can be
$\left\{ \rho \in (\L1\cap \BV) (\reali^N; [0,1]) \colon \tv (\rho) <
  M\right\}$ for $\rho_0$ and $\left\{\nu \in \C2 \left(\reali^N;
    \overline{B (0,1)} \right) \colon \norma{\nabla^3\nu} \leq M \right\}$
for $\nu$, where $M>0$ is arbitrary.

A different problem solved by the same analytical techniques is that
of the dynamic parameter estimation. Once real data are available, one
is left with the problem of determining the various parameters
entering $\nu$, $v$ or $\mathcal{I}$. Theorem~\ref{thm:stability}
ensures the existence of the parameters that allows a best agreement
between the solutions to~\eqref{eq:General} and the data.

\section{The Models}
\label{sec:Des}

This section is devoted to the study of specific cases
of~\eqref{eq:General}. Aiming at real applications, it is necessary to
take into consideration the various constraints that are present on
the movement of pedestrians. Therefore, we introduce the subset
$\Omega$ of $\reali^N$ which characterizes the region reachable to any
individual. The boundary $\partial\Omega$ consists of \emph{walls}
that can not be crossed by any individual. On the set $\Omega$ we
require that
\begin{description}
\item[($\boldsymbol{\Omega}$)] $\Omega \subseteq \reali^N$ is the
  closure of a non empty connected open set. If $\partial\Omega$ is
  not empty, there exists a positive $r_\Omega$ such that the function
  \begin{displaymath}
    \begin{array}{ccccc}
      d_{\partial\Omega} &\colon & B (\partial\Omega,r_\Omega) \cap \Omega
      & \to & \reali^+
      \\
      & & x
      & \mapsto & \inf \left\{d (x,w) \colon w \in \partial\Omega\right\}
    \end{array}
  \end{displaymath}
  is of class $\C2\left(B (\partial\Omega,r_\Omega) \cap \Omega;
    \reali^+\right)$.
\end{description}
Note that we do not require $\Omega$ to be bounded. This assumptions
allows us to introduce the inward normal
\begin{equation}
  \label{eq:n}
  n (x) = \nabla d_{\partial\Omega} (x)
\end{equation}
on all the strip $B (\partial\Omega,r_\Omega) \cap \Omega$. Moreover,
$\norma{n (x)} = 1$.

For the present models to be acceptable, it is mandatory that no
individual enters any wall, provided the initial datum is supported
inside the physically admissible space. Analytically, this is
described by the following \emph{invariance} property:
\begin{description}
\item{\textbf{(P)}} The model~\eqref{eq:General} is invariant with
  respect to $\Omega$ if
  \begin{displaymath}
    \spt \rho_0 \subset \Omega
    \qquad\qquad \Longrightarrow  \qquad\qquad
    \spt \rho(t) \subset \Omega
    \mbox{ for all } t\geq 0\,.
  \end{displaymath}
\end{description}

\noindent Below, we show that theorems~\ref{thm:existence}
and~\ref{thm:stability} can be applied to reasonable crowd dynamics
models, in the sense that the presence of a physically admissible set
$\Omega$ is also considered and its invariance in the sense
of~\textbf{(P)} is proved. More precisely, we show below the following
sufficient condition for invariance.

\begin{proposition}
  \label{prop:invariance}
  Let $\nu$, $\mathcal{I}$ and $\Omega$
  satisfy~\textbf{($\boldsymbol{\nu}$)}, \textbf{($\boldsymbol{I}$)}
  and~\textbf{(\/$\boldsymbol{\Omega}$)}. If for all $x \in
  \partial\Omega$ and $\rho \in \L1 (\reali^N;[0,R])$ with $\spt\rho
  \subseteq \Omega$
  \begin{equation}
    \label{eq:Nagumo}
    \left(
      \nu (x)
      +
      \left(\mathcal{I} (\rho)\right) (x)
    \right)
    \cdot n (x)
    \geq 0
  \end{equation}
  then, property~\textbf{(P)} holds.
\end{proposition}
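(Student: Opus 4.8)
The plan is to prove invariance by reformulating the support condition as an $\L1$ bound on the mass lying outside $\Omega$ and showing this mass cannot grow from zero. Concretely, I would work with the Kru\v zkov formulation of Definition~\ref{def:WS}: since $\rho$ is a weak entropy solution, for the fixed solution $\rho$ the velocity field $w(t,x) = \nu(x) + \bigl(\mathcal{I}(\rho(t))\bigr)(x)$ is determined, and $\rho$ is a Kru\v zkov solution of the linear-in-field conservation law~\eqref{eq:Kruzkov} with this $w$. The Nagumo-type condition~\eqref{eq:Nagumo} says precisely that on $\partial\Omega$ the characteristic speed points inward (or tangentially), so that the outward normal flux through $\partial\Omega$ is nonpositive; this is the geometric content one must convert into an estimate.

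First I would fix a time $t$ and consider the quantity $\int_{\reali^N \setminus \Omega} \rho(t,x)\,\d{x}$, the mass that has escaped the admissible region. The goal is to show this quantity stays zero given that it starts at zero. I would test the Kru\v zkov entropy inequality with $k=0$, which, since $v(0)=V$ gives $q(0)=0\cdot v(0)=0$, reduces the flux and source terms involving $k$, and use a test function $\phi$ that approximates the indicator of $(\reali^N\setminus\Omega)\times[0,\tau]$. More precisely I would build $\phi$ from a smooth cutoff in time and a spatial cutoff $\psi_\delta(x)$ that is $1$ outside $B(\partial\Omega,\delta)\cup\Omega$ and transitions to $0$ inside $\Omega$, with $\nabla\psi_\delta$ supported in the strip $B(\partial\Omega,r_\Omega)\cap\Omega$ and pointing in the inward normal direction $n(x)$. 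The convolution structure of $\mathcal{I}$ and the hypotheses~\textbf{(I.1)} guarantee $w$ is bounded and $\C2$, so the boundary terms are controlled.

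The key computation is the sign of the flux term $\int \rho\,v(\rho)\,w\cdot\nabla\phi$. By construction $\nabla\psi_\delta$ is a nonpositive multiple of $n(x)$ (it decreases as one moves inward), so $w\cdot\nabla\psi_\delta$ carries the sign of $-\,w\cdot n(x)$, which by~\eqref{eq:Nagumo} is nonpositive on $\partial\Omega$. Since $\rho\,v(\rho)\geq 0$, the flux contribution across the boundary has the correct sign to prevent mass from leaving $\Omega$. Passing to the limit $\delta\to 0$, the strip shrinks to $\partial\Omega$ and one obtains a Gronwall-type differential inequality $\frac{\d{}}{\d{t}}\int_{\reali^N\setminus\Omega}\rho(t)\,\d{x}\leq 0$ in the distributional sense, whence the escaped mass remains zero. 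A complementary point: one must also verify that $\spt\rho_0\subseteq\Omega$ forces $\rho(t)$ to vanish a.e.\ outside $\Omega$, i.e.\ that vanishing $\L1$ mass outside $\Omega$ together with $\rho\geq 0$ yields the support statement~\textbf{(P)}; this is immediate from nonnegativity.

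The main obstacle I anticipate is making the boundary-flux sign argument rigorous at the level of the weak entropy (Kru\v zkov) formulation rather than for smooth solutions, since $\rho$ need only be $\BV$ and the field $w$ depends on $\rho$ through the nonlocal operator $\mathcal{I}$. In particular, controlling the error terms as the cutoff $\psi_\delta$ concentrates on $\partial\Omega$ requires that the normal-speed inequality~\eqref{eq:Nagumo} hold uniformly and that $w$ be continuous up to the boundary, which is where assumption~\textbf{(\/$\boldsymbol{\Omega}$)} (the $\C2$ regularity of $d_{\partial\Omega}$, hence of $n$) and the regularity in~\textbf{(I)} are essential. A delicate subsidiary point is that~\eqref{eq:Nagumo} is assumed only for densities $\rho$ with $\spt\rho\subseteq\Omega$, so the argument must be run so that, for as long as the support stays in $\Omega$, the hypothesis applies to the current $\rho(t)$; a short continuity-in-time/bootstrap remark closes this loop, using $\rho\in\C0([0,T];\L1)$ from Theorem~\ref{thm:existence}.
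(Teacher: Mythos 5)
Your proposal is correct and follows essentially the same route as the paper's proof: test the weak (Kru\v zkov) formulation of~\eqref{eq:Kruzkov} with a time cutoff multiplied by a smoothed indicator of the complement of $\Omega$, use~\eqref{eq:Nagumo} together with the regularity of $w$ to control the sign of the concentrating boundary-flux term, and conclude from the nonnegativity of $\rho$ that no mass appears outside $\Omega$. The differences are technical variants only: the paper builds its cutoff by mollifying $\caratt{B(0,M)\setminus\Omega}$ and handles the truncation at infinity explicitly via the ball $B(0,M)$ (your $\psi_\delta$ would need the same truncation to be an admissible compactly supported test function, a routine fix), while the bootstrap subtlety you rightly flag --- that~\eqref{eq:Nagumo} is assumed only for densities supported in $\Omega$, hence must be applicable to $\rho(t)$ along the evolution --- is in fact passed over silently in the paper's own argument.
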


\subsection{The Model~\eqref{eq:General}--\eqref{eq:IGood}}
\label{sub:Good}

The starting point for~\eqref{eq:General}--\eqref{eq:IGood} is
provided by the degenerate parabolic model
\begin{displaymath}
  \partial_t \rho
  +
  \div\left(
    \rho\, v (\rho) \bigg(
    \nu (x)
    -
    \epsilon \frac{\nabla \psi(\rho)}{\sqrt{1+\norma{\nabla\psi(\rho)}^2}}
    \bigg)
  \right)
  =
  0
\end{displaymath}
introduced in~\cite{BressanColomboPedoni}, which fits
in~\eqref{eq:General} with $\mathcal{I} (\rho) = -\epsilon \,
\nabla\psi(\rho) \, \left/ \sqrt{1+\norma{\nabla\psi(\rho)}^2}
\right.$, motivated by the desire of each individual to avoid entering
regions occupied by a high crowd density. Here, $\psi$ is a suitable
weight function. Assuming that each pedestrian reacts to evaluations
of \emph{averages} of the density, we obtain
\begin{displaymath}
  \partial_t \rho
  +
  \div\left(
    \rho\, v (\rho) \bigg(
    \nu (x)
    -
    \epsilon \frac{\nabla (\psi(\rho)*\eta)}{\sqrt{1+\norma{\nabla(\psi(\rho)*\eta)}^2}}
    \bigg)
  \right)
  =
  0\,.
\end{displaymath}
Here, we avoid the introduction of $\psi$ to limit the analytical
technicalities, obtaining~\eqref{eq:General}--\eqref{eq:IGood}. We
assume throughout that $\epsilon>0$ is fixed and that the mollifier
$\eta$ satisfies
\begin{description}
\item[($\boldsymbol{\eta}$)] $\eta \in \Cc3 (\reali^N;\reali^+)$ with
  $\int_{\reali^N}\eta (x)\d{x}=1$.
\end{description}
\noindent This convolution kernel has a key role: the individual at
$x$ deviates from the optimal path considering the crowd present in
the region $x-\spt\eta$, when no walls are present. The value $\eta
(\xi)$ is the relevance that the individual at $x$ gives to the
density $\rho(x-\xi)$ located at $x-\xi$.

The next result shows that the present model fits in the framework
described in Section~\ref{sec:AR} when applied on all of $\reali^N$.

\begin{lemma}
  \label{lem:Good}
  Fix $\epsilon>0$ and let $\eta$
  satisfy~\textbf{($\boldsymbol{\eta}$)}. Then, the operator
  $\mathcal{I}$ in~\eqref{eq:IGood} satisfies~\textbf{(I)}.
\end{lemma}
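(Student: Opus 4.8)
The plan is to write $\mathcal{I}(\rho) = -\epsilon\, F(\nabla(\rho*\eta))$, where $F\colon\reali^N\to\reali^N$ is the smooth map $F(p)=p/\sqrt{1+\norma{p}^2}$. The whole statement then reduces to two ingredients. First, $F\in\C\infty$ with $F$, $DF$, $D^2F$ and $D^3F$ all bounded on $\reali^N$ (in particular $\norma{F}\le 1$ and $F$ is globally Lipschitz with constant $\norma{DF}_{\L\infty}$). Second, repeated use of Young's inequality for the convolution $A:=\rho*\eta$. Since $\eta\in\Cc3$ one has $A\in\C3$ with $\nabla^\alpha A=\rho*\nabla^\alpha\eta$ for $\modulo{\alpha}\le 3$; hence $\nabla A\in\C2$ and, composing with $F$, $\mathcal{I}(\rho)\in\C2(\reali^N;\reali^N)$. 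I would use Young's inequality in three forms side by side: $\norma{\rho*\nabla^\alpha\eta}_{\L1}\le\norma{\rho}_{\L1}\norma{\nabla^\alpha\eta}_{\L1}$, the $\L\infty$ form $\norma{\rho*\nabla^\alpha\eta}_{\L\infty}\le\norma{\rho}_{\L1}\norma{\nabla^\alpha\eta}_{\L\infty}$, and the alternative $\L\infty$ form $\norma{\rho*\nabla^\alpha\eta}_{\L\infty}\le R\,\norma{\nabla^\alpha\eta}_{\L1}$, the last exploiting $0\le\rho\le R$.

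For \textbf{(I.1)} and \textbf{(I.2)} I would simply differentiate $\mathcal{I}(\rho)=-\epsilon F(\nabla A)$ by the chain rule and bound each factor. The estimate $\norma{\mathcal{I}(\rho)}_{\L\infty}\le\epsilon$ is immediate; from $\nabla\mathcal{I}(\rho)=-\epsilon\, DF(\nabla A)\,\nabla^2 A$ one controls $\norma{\nabla\mathcal{I}(\rho)}_{\L\infty}$ by $\norma{DF}_{\L\infty}\norma{\rho}_{\L1}\norma{\nabla^2\eta}_{\L\infty}$, while the same expression bounded in $\L1$ through $\norma{\nabla^2 A}_{\L1}\le\norma{\rho}_{\L1}\norma{\nabla^2\eta}_{\L1}$ gives the divergence bound, proving \textbf{(I.1)}. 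Differentiating once more produces, besides a term in $\nabla^3 A$ (which lies in $\L1$ by Young — exactly where $\eta\in\C3$ is needed), products of the form $\nabla^2 A\cdot\nabla^2 A$; bounding one factor in $\L\infty$ and the other in $\L1$ yields a quantity quadratic in $\norma{\rho}_{\L1}$, giving \textbf{(I.2)}. All resulting constants are increasing in $\norma{\rho}_{\L1}$, so they are absorbed into a single increasing $C_I\in\Lloc\infty$.

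For \textbf{(I.3)} I would exploit that $F$ and $DF$ are globally Lipschitz. The $\L\infty$ estimate is direct: $\norma{\mathcal{I}(r_1)-\mathcal{I}(r_2)}_{\L\infty}\le\epsilon\norma{DF}_{\L\infty}\norma{(r_1-r_2)*\nabla\eta}_{\L\infty}\le\epsilon\norma{DF}_{\L\infty}\norma{\nabla\eta}_{\L\infty}\norma{r_1-r_2}_{\L1}$, and the $\L1$ estimate of $\mathcal{I}(r_1)-\mathcal{I}(r_2)$ follows the same way via $\norma{(r_1-r_2)*\nabla\eta}_{\L1}$. For the divergence, writing $\div F(\nabla A)=\sum_{i,j}(\pt_{p_j}F_i)(\nabla A)\,\pt_i\pt_j A$ and telescoping, the difference splits into $(\pt_{p_j}F_i)(\nabla A_1)\,\pt_i\pt_j(A_1-A_2)$, bounded in $\L1$ by $\norma{DF}_{\L\infty}\norma{\nabla^2\eta}_{\L1}\norma{r_1-r_2}_{\L1}$, plus the term $\big((\pt_{p_j}F_i)(\nabla A_1)-(\pt_{p_j}F_i)(\nabla A_2)\big)\,\pt_i\pt_j A_2$.

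The main obstacle is precisely this last term: a naive estimate forces a factor $\norma{r_2}_{\L1}$, which is not uniformly bounded on $\L1(\reali^N;[0,R])$ and would destroy the universality of $K_I$. The resolution — and the reason the bound $0\le\rho\le R$ is used here rather than mere integrability — is to estimate $\pt_i\pt_j A_2$ in $\L\infty$ through the alternative Young form $\norma{\nabla^2 A_2}_{\L\infty}\le R\,\norma{\nabla^2\eta}_{\L1}$, which is \emph{independent} of $r_2$, and to place the difference $(\pt_{p_j}F_i)(\nabla A_1)-(\pt_{p_j}F_i)(\nabla A_2)$ in $\L1$ using the Lipschitz character of $DF$ together with $\norma{\nabla A_1-\nabla A_2}_{\L1}\le\norma{\nabla\eta}_{\L1}\norma{r_1-r_2}_{\L1}$. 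This delivers a bound of the form $C\,R\,\norma{r_1-r_2}_{\L1}$ with $C$ depending only on $\eta$ and $F$, hence a constant $K_I$ depending only on $\epsilon$, $R$ and $\eta$. Finally, the same $R$-uniform device gives the continuity $\mathcal{I}\in\C0$ into $\C2$: carrying out the identical computation for $\nabla\mathcal{I}$ and $\nabla^2\mathcal{I}$ in place of $\div\mathcal{I}$ shows that $\mathcal{I}$ is in fact Lipschitz from $\L1(\reali^N;[0,R])$ to $\C2(\reali^N;\reali^N)$, which is more than enough.
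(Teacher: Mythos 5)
Your proposal is correct and follows essentially the same route as the paper: both rewrite $\mathcal{I}(\rho)$ as the bounded smooth normalization map applied to $\rho*\nabla\eta$, then combine the chain rule with Young's inequality in its $\L1$--$\L1$, $\L1$--$\L\infty$ and $R$-uniform ($\norma{\rho*\nabla^\alpha\eta}_{\L\infty}\leq R\,\norma{\nabla^\alpha\eta}_{\L1}$) forms, the last being exactly what produces the $R^2$ and $R^4$ factors in the paper's explicit constants and makes $K_I$ in \textbf{(I.3)} independent of $\norma{r_i}_{\L1}$. The paper merely lists the resulting estimates without exhibiting the telescoping; your write-up makes that structure, and the point where the bound $0\leq\rho\leq R$ is indispensable, explicit.
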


When the region $\Omega$ reachable by the crowd is restricted by the
walls $\partial\Omega$, we intend the convolution restricted to
$\Omega$
\begin{equation}
  \label{eq:convolution}
  (\rho * \eta) (x)
  =
  \int_\Omega \rho (y) \, \eta (y-x) \, \d{y}
\end{equation}
which coincide with the previous definition in the case $\spt
\rho\subset \Omega$.  A better choice is described
in~\eqref{eq:OmegaX}.  The vector $\nu$ is here chosen as a sum
\begin{equation}
  \label{eq:nu}
  \nu = g + \delta \,.
\end{equation}
The former vector $g$ is tangent to the \emph{``optimal''} path,
depending only on the geometry of the environment and coherent with
it. Hence we assume that
\begin{description}
\item[(g)] $g \in \C2(\reali^N;S^{N-1})$ satisfies $\nabla g \in
  \L\infty (\reali^N; \reali^{N\times N})$, $\div g \in (\W11 \cap
  \W1\infty) (\reali^N; \reali)$ and the invariance condition $g (x)
  \cdot n (x) \geq 0$ holds for all $x \in \partial\Omega$.
\end{description}
\noindent The latter vector $\delta$ describes the discomfort felt
when passing too near to walls or obstacles. Below, we choose
\begin{equation}
  \label{eq:delta}
  \delta (x)
  =
  \lambda \, \alpha (x) \, n (x) \,.
\end{equation}
Here, $\lambda \in \reali^+$ is a suitable constant and $n (x)$ is the
inward normal~\eqref{eq:n}.  The role of the function $\alpha$ is to
confine this discomfort to the region near the walls, i.e.~we require
\begin{description}
\item[($\boldsymbol{\alpha}$)] $\alpha \in \C2 (\reali^N;[0,1])$ is
  such that $\alpha (x) = 0$ whenever $B (x,r_{\Omega}) \subseteq
  \Omega$ and $\alpha (x) = 1$ for $x \in \partial\Omega$.
\end{description}
A better choice for the discomfort is discussed in
Appendix~\ref{app:G}.

The present setting~\eqref{eq:General}--\eqref{eq:IGood} can be
effectively applied also in presence of walls, as shown by the next
result.

\begin{proposition}
  \label{prop:GoodP}
  Let $\epsilon>0$, \textbf{(v)}, \textbf{(\/$\boldsymbol{\Omega}$)},
  \textbf{($\boldsymbol{\eta}$)}, \textbf{(g)}
  and~\textbf{($\boldsymbol{\alpha}$)} hold. Define $\nu$
  by~\eqref{eq:nu}, $\delta$ by~\eqref{eq:delta} and $\mathcal{I}$
  by~\eqref{eq:IGood}.  Assume moreover that either $\partial\Omega$
  is compact, or $\alpha \in (\W21 \cap \W2\infty)$ and
  $d_{\partial\Omega} \in (\W31 \cap \W3\infty)$.  Then,
  \eqref{eq:General}--\eqref{eq:IGood} satisfies the assumptions of
  Theorem~\ref{thm:existence} and
  Theorem~\ref{thm:stability}. Furthermore, if $\lambda \geq
  \epsilon\, R \, \norma{\nabla\eta}_{\L1}$, then
  property~\textbf{(P)} holds.
\end{proposition}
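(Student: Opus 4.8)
The plan is to verify the three separate claims of Proposition~\ref{prop:GoodP} in turn: first that $\nu$ defined by~\eqref{eq:nu}--\eqref{eq:delta} satisfies~\textbf{($\boldsymbol{\nu}$)}, second that the operator $\mathcal{I}$ of~\eqref{eq:IGood} with the restricted convolution~\eqref{eq:convolution} satisfies~\textbf{(I)}, so that Theorems~\ref{thm:existence} and~\ref{thm:stability} apply, and third that the sign condition~\eqref{eq:Nagumo} of Proposition~\ref{prop:invariance} holds under the threshold $\lambda \geq \epsilon\, R\, \norma{\nabla\eta}_{\L1}$, yielding property~\textbf{(P)}.

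\smallskip\noindent\emph{Hypothesis~\textbf{($\boldsymbol{\nu}$)}.} Since $\nu = g + \lambda\,\alpha\,n$, I would check each summand. By~\textbf{(g)}, $g$ already contributes $\C2 \cap \W1\infty$ regularity with $\div g \in \W11 \cap \W1\infty$. For $\delta = \lambda\,\alpha\,n$, the point is that $n = \nabla d_{\partial\Omega}$ is only defined on the strip $B(\partial\Omega, r_\Omega)\cap\Omega$, but~\textbf{($\boldsymbol{\alpha}$)} forces $\alpha$ to vanish outside this strip, so $\delta$ extends by zero to a globally $\C2$ field. The regularity of $\div\delta$ reduces to controlling $\nabla\alpha\cdot n + \alpha\,\div n = \nabla\alpha\cdot n + \alpha\,\Delta d_{\partial\Omega}$ and its gradient in $\L1\cap\L\infty$; this is exactly where the dichotomy ``$\partial\Omega$ compact, or $\alpha \in \W21\cap\W2\infty$ and $d_{\partial\Omega}\in\W31\cap\W3\infty$'' enters. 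When $\partial\Omega$ is compact the relevant quantities are supported in a bounded strip and the $\L1$ bounds follow from the $\L\infty$ ones; otherwise one invokes the stated Sobolev hypotheses directly to bound $\nabla\div\delta$. This step is essentially bookkeeping once the support structure of $\alpha$ is exploited.

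\smallskip\noindent\emph{Hypothesis~\textbf{(I)}.} On all of $\reali^N$ this is precisely Lemma~\ref{lem:Good}. The only genuinely new feature here is the restricted convolution~\eqref{eq:convolution}. For initial data with $\spt\rho\subset\Omega$ the two definitions coincide, so the estimates~\textbf{(I.1)}--\textbf{(I.3)} transfer from Lemma~\ref{lem:Good}; more carefully, one observes that $(\rho*\eta)(x)=\int_\Omega\rho(y)\eta(y-x)\,\d y$ is still a smooth, uniformly bounded function whose derivatives fall on $\eta$, so the bounds on $\mathcal{I}(\rho)=-\epsilon\,\nabla(\rho*\eta)/\sqrt{1+\norma{\nabla(\rho*\eta)}^2}$ in $\W1\infty$, on $\div\mathcal{I}$ in $\L1$, and the Lipschitz estimates in~\textbf{(I.3)} all go through with constants depending only on $\eta$, $\epsilon$, and $R$. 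I would simply point to Lemma~\ref{lem:Good} and remark that restricting the domain of integration to $\Omega$ does not affect the derivation.

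\smallskip\noindent\emph{Invariance via~\eqref{eq:Nagumo}.} This is the main obstacle and the reason for the explicit threshold on $\lambda$. For $x\in\partial\Omega$ one has $\alpha(x)=1$ and $n(x)$ the inward normal, so
\begin{displaymath}
  \left(\nu(x)+\mathcal{I}(\rho)(x)\right)\cdot n(x)
  =
  g(x)\cdot n(x)
  +
  \lambda
  -
  \epsilon\,
  \frac{\nabla(\rho*\eta)(x)\cdot n(x)}{\sqrt{1+\norma{\nabla(\rho*\eta)(x)}^2}}\,.
\end{displaymath}
By~\textbf{(g)} the first term is $\geq 0$, so it suffices to show $\lambda \geq \epsilon\,\modulo{\nabla(\rho*\eta)(x)\cdot n(x)}$, and since the denominator is $\geq 1$ and $\norma{n(x)}=1$, it is enough that $\lambda \geq \epsilon\,\norma{\nabla(\rho*\eta)}_{\L\infty}$. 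The key estimate is then $\norma{\nabla(\rho*\eta)}_{\L\infty} = \norma{\rho*\nabla\eta}_{\L\infty}\leq \norma{\rho}_{\L\infty}\norma{\nabla\eta}_{\L1}\leq R\,\norma{\nabla\eta}_{\L1}$, using $0\leq\rho\leq R$ and Young's inequality. Hence the condition $\lambda\geq\epsilon\,R\,\norma{\nabla\eta}_{\L1}$ makes~\eqref{eq:Nagumo} hold for every admissible $\rho$, and Proposition~\ref{prop:invariance} delivers~\textbf{(P)}. The delicate point to state cleanly is that the differentiation of the restricted convolution commutes with the gradient and lands on $\eta$, so that the bound depends only on $\norma{\nabla\eta}_{\L1}$ and the uniform bound $R$ on the density, independently of the geometry of $\Omega$.
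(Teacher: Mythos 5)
Your proposal is correct and follows essentially the same route as the paper: hypothesis \textbf{($\boldsymbol{\nu}$)} by direct verification on the decomposition $\nu = g + \lambda\,\alpha\,n$ (the paper dismisses this as ``simple computations''; your bookkeeping with the support of $\alpha$ and the dichotomy on $\partial\Omega$ fills in what it leaves implicit), hypothesis \textbf{(I)} by citing Lemma~\ref{lem:Good}, and property \textbf{(P)} by reducing~\eqref{eq:Nagumo} to $\lambda \geq \epsilon\,\norma{\rho*\nabla\eta}_{\L\infty}$ after dropping the denominator and then applying the same Young/H\"older bound $\norma{\rho*\nabla\eta}_{\L\infty}\leq \norma{\rho}_{\L\infty}\norma{\nabla\eta}_{\L1}\leq R\,\norma{\nabla\eta}_{\L1}$, concluding via Proposition~\ref{prop:invariance}.
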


\subsection{The Model~\eqref{eq:General}--\eqref{eq:IPT}}
\label{sub:GoodPT}

In the framework of~\eqref{eq:General}, we now extend the
model~\cite[(2.1)--(2.4)]{CristianiPiccoliTosin} to take into account
the effects of crowd density on the speed modulus, i.e.~of $v
(\rho)$. Moreover, we interpret the nonlocal term as a weighted
average of the density, where the amount of crowd in the direction of
the optimal path $g$ is given more importance.

Using the same notation as above, we thus consider the
model~\eqref{eq:General} with
\begin{equation}
  \label{eq:IGoodPT}
  \mathcal{I} (\rho)
  =
  -
  \epsilon \,
  \frac{
    \nabla \!\!
    \int_{\reali^N} \rho(y) \, \eta(x-y)  \,
    \phi \!\left((y-x) \cdot g(x)\right)\d{y}
  }{\sqrt{1+
      \norma{
        \nabla \!\!
        \int_{\reali^N} \rho(y) \, \eta(x-y)  \,
        \phi \!\left((y-x) \cdot g(x)\right)\d{y}}^2}
  } \,.
\end{equation}
Here, $\epsilon$, $\eta$ and $v$ are as in \S~\ref{sub:Good} and, in
particular, $\nu$ is as in~\eqref{eq:nu}.

The (almost) isotropic convolution~\eqref{eq:convolution} is here
weighted by $\phi$, whose argument is essentially the angle between
the preferred path $g$ and $y-x$, the point $x$ being the position of
the individual an the point $y$ being the location where the density
is evaluated. This term takes into account the preference of each
individual to deviate little from the preferred path defined by
$g$. For example, let $\phi\in \C\infty(\reali, [0,1])$ be such that
$\phi \equiv 0$ on $\left]-\infty, 0\right]$ and $\phi \equiv 1$ on
$\left[\theta, +\infty\right[$, where $\theta>0$ is a given
parameter. Then, adding the function $\phi\left((y-x)\cdot
  g(x)\right)$ into the nonlocal term means that the individual at $x$
reacts to the average density evaluated in the prefer ed direction
$g(x)$.

The denominator is a regularized normalization. Its presence is
necessary from the modeling point of view, for coherence with the
presence of $v (\rho)$. From the analytical point of view, this
normalization makes various expressions slightly more complicate, but
all estimates remain doable.

We are thus lead to the equation
\begin{equation}
  \label{eq:GoodPT}
  \partial_t \rho
  +
  \div \left(
    \rho\, v (\rho)
    \left(
      \nu (x)
      -
      \epsilon \,
      \frac{
        \nabla \!\!
        \int_{\reali^N} \rho(y) \, \eta(x-y)  \,
        \phi \!\left((y-x) \cdot g(x)\right)\d{y}
      }{\sqrt{1+
          \norma{
            \nabla \!\!
            \int_{\reali^N} \rho(y) \, \eta(x-y)  \,
            \phi \!\left((y-x) \cdot g(x)\right)\d{y}}^2}
      }
    \right)
  \right)
  =
  0
\end{equation}
and we verify that it fits in the analytical framework provided in
Section~\ref{sec:AR}.

\begin{lemma}
  \label{lem:PT}
  Fix $\epsilon>0$, $\eta \in \Cc3 (\reali^N;\reali^+)$ with
  $\int_{\reali^N} \eta (x) \d{z} =1$, $g \in \W3\infty(\reali;
  [0,1])$ and $\nu \in \W3\infty(\reali^N,\reali)$. Then, the operator
  $\mathcal{I}$ defined by~\eqref{eq:IPT} satisfies~\textbf{(I)}.
\end{lemma}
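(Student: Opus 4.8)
The plan is to verify the three clusters of estimates in hypothesis \textbf{(I)}, namely \textbf{(I.1)}, \textbf{(I.2)} and \textbf{(I.3)}, for the operator $\mathcal{I}$ given by~\eqref{eq:IPT}. Since this operator differs from the one in Lemma~\ref{lem:Good} only by the presence of the weight $\phi\!\left((y-x)\cdot g(x)\right)$ inside the convolution, my first move is to isolate the scalar quantity
\begin{displaymath}
  P(\rho)(x)
  =
  \int_{\reali^N} \rho(y)\,\eta(x-y)\,\phi\!\left((y-x)\cdot g(x)\right)\d{y}
\end{displaymath}
and prove bounds on $P(\rho)$ and its derivatives up to order needed ($P$, $\nabla P$, $\nabla^2 P$, $\nabla^3 P$, plus the relevant divergence combinations). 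Once these are in hand, the passage to $\mathcal{I}(\rho) = -\epsilon\,\nabla P/\sqrt{1+\norma{\nabla P}^2}$ is handled by noting that the map $z \mapsto z/\sqrt{1+\norma z^2}$ is smooth with bounded derivatives of every order; composing with it preserves $\W1\infty$, $\W11$ and Lipschitz bounds and converts control of $\nabla P, \nabla^2 P,\ldots$ into control of $\mathcal{I}, \div\mathcal{I}, \nabla\div\mathcal{I}$. This reduction mirrors exactly the structure of Lemma~\ref{lem:Good}, so the genuinely new work is entirely at the level of $P$.

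Next I would estimate the derivatives of $P$. Differentiation in $x$ falls on three places: on $\eta(x-y)$, on $g(x)$ through the argument of $\phi$, and on the explicit $x$ in $(y-x)\cdot g(x)$. Writing $\beta(x,y) = (y-x)\cdot g(x)$, one has $\nabla_x\beta = -g(x) + (\nabla g(x))^{\mathsf T}(y-x)$, so each spatial derivative of the integrand produces either a derivative of $\eta$, a factor $\phi'$ or $\phi''$ times polynomial-in-$(y-x)$ coefficients built from $g$ and its derivatives. The key point is that all these extra factors $(y-x)$ are harmless because $\eta$ has compact support: on $\spt\eta(\cdot - x)$ one has $\norma{y-x}$ bounded by the radius of $\spt\eta$. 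Thus every derivative of $P$ is a finite sum of convolutions of $\rho$ against compactly supported $\C{}$ kernels, whose $\L\infty$ norm is controlled by $\norma\rho_{\L1}$ (giving the $\W1\infty$ and higher pointwise bounds, hence the increasing $C_I$) and whose contribution to $\norma{\cdot}_{\L1}$ after the outer $x$-derivative is likewise controlled, using that $\rho$ is bounded in $\L1$ and that integrating the compactly supported kernels in $x$ gives finite constants depending only on $\eta$, $\phi$, $g$. The hypotheses $g\in\W3\infty$ and $\eta\in\Cc3$ are exactly what is needed to carry this through up to the third-order quantity appearing in \textbf{(I.2)}.

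For the Lipschitz estimates \textbf{(I.3)}, I would exploit linearity of $P$ in $\rho$: $P(r_1)-P(r_2) = P(r_1-r_2)$, and likewise for each derivative, so the bounds on $\norma{\mathcal{I}(r_1)-\mathcal{I}(r_2)}$ follow from the $\L\infty$-Lipschitzness of $z\mapsto z/\sqrt{1+\norma z^2}$ together with the same convolution estimates applied to $r_1-r_2$, producing the constant $K_I$. The main obstacle, such as it is, is purely bookkeeping: organizing the product rule for the third derivatives of the integrand so that one can see cleanly that no uncontrolled factor of $(y-x)$ or of $\nabla^k g$ with $k>3$ ever appears, and tracking which norm of $\rho$ (always $\L1$, by the compact support of the kernels) bounds each term. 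No single step is conceptually hard; the care required is in confirming that the weight $\phi\!\left((y-x)\cdot g(x)\right)$, unlike the purely isotropic kernel of~\eqref{eq:IGood}, does not raise the differentiability demand beyond what $g\in\W3\infty$ and $\eta\in\Cc3$ supply, which is why those precise regularity assumptions are imposed in the statement.
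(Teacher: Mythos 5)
Your plan is sound, but it is genuinely different from the paper's own proof of this lemma, and the difference is worth spelling out. The paper's proof begins by rewriting the operator as
\begin{displaymath}
  \mathcal{I}(\rho)(x)
  =
  \epsilon \int_{\reali^N} \rho(y)\,
  \nabla\!\left( \eta(x-y)\, g\!\left((y-x)\cdot \nu(x)\right) \right) \d{y}
\end{displaymath}
(the lemma's statement uses $g$ for the scalar weight and $\nu$ for the direction, i.e.\ the roles played by $\phi$ and $g$ in~\eqref{eq:IPT}); in other words, the paper silently discards the normalizing denominator $\sqrt{1+\norma{\nabla P}^2}$ and estimates only what you call $\nabla P$, by exactly the kernel bookkeeping you describe ($\norma{y-x}\leq r_\eta$ on $\spt\eta$, product rule on $\eta$, on the weight, and on the argument of the weight). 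The payoff of that shortcut is that \textbf{(I.3)} comes in one line, since the unnormalized operator is linear in $\rho$. The cost is that, as written, the paper's argument proves \textbf{(I)} for the numerator only, not for the operator of~\eqref{eq:IPT} as stated. Your proof, which keeps the denominator and composes with the map $N(z)=z/\sqrt{1+\norma{z}^2}$, is the one that actually matches the statement; it is also precisely the strategy the paper deploys where the normalization is taken seriously, namely in Lemma~\ref{lem:Good} and in Proposition~\ref{prop:GoodPT}, whose constants carry the tell-tale factors of the form $1+R^2\norma{\eta}_{\W11}^2\norma{\phi}_{\W1\infty}^2\left(1+(1+\ell)\norma{g}_{\W1\infty}\right)^2$ produced by differentiating the denominator.

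One concrete caveat on your \textbf{(I.3)}: linearity of $P$ plus the $1$-Lipschitzness of $N$ does give the $\L\infty$ and $\L1$ bounds on $\mathcal{I}(r_1)-\mathcal{I}(r_2)$, but it is not enough for $\norma{\div\left(\mathcal{I}(r_1)-\mathcal{I}(r_2)\right)}_{\L1}$. Differentiating the composition and subtracting, you get
\begin{displaymath}
  DN\!\left(\nabla P(r_1)\right)\left[\nabla^2 P(r_1)-\nabla^2 P(r_2)\right]
  +
  \left[DN\!\left(\nabla P(r_1)\right)-DN\!\left(\nabla P(r_2)\right)\right]\nabla^2 P(r_2) \,.
\end{displaymath}
The first piece is handled by your argument ($DN$ bounded, $\nabla^2 P(r_1-r_2)$ controlled in $\L1$ by $\norma{r_1-r_2}_{\L1}$). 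The cross term is not: it is bilinear, and to bound it by $K_I\norma{r_1-r_2}_{\L1}$ with $K_I$ uniform over all of $\L1(\reali^N;[0,R])$ you must use the Lipschitz continuity of $DN$ (not just of $N$) to extract $\norma{\nabla P(r_1-r_2)}_{\L1}$, and then bound $\norma{\nabla^2 P(r_2)}_{\L\infty}$ by $R$ times a kernel constant, exploiting $0\leq r_2\leq R$ rather than $\norma{r_2}_{\L1}$, which is unbounded on this set. This is exactly why the \textbf{(I.3)} constants in the paper's proof of Lemma~\ref{lem:Good} depend on $R$. Your framework (all derivatives of $N$ bounded, compactly supported kernels) contains everything needed, but the phrase ``the same convolution estimates applied to $r_1-r_2$'' glosses over this step, which is the one place where the normalization makes \textbf{(I.3)} genuinely nonlinear --- and, presumably, the step the paper avoided by dropping the denominator.
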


Passing now to the case in which the crowd's movement is constrained
by the walls $\partial\Omega$,as in the preceding section, we intend
all integrals in~\eqref{eq:IGoodPT}--\eqref{eq:GoodPT} restricted to
$\Omega$. In particular, we consider now
\begin{equation}
  \label{eq:IGoodPTOmega}
  \mathcal{I} (\rho)
  =
  -
  \epsilon \,
  \frac{
    \nabla \!\!
    \int_{\Omega} \rho(y) \, \eta(x-y)  \,
    \phi \!\left((y-x) \cdot g(x)\right)\d{y}
  }{\sqrt{1+
      \norma{
        \nabla \!\!
        \int_{\Omega} \rho(y) \, \eta(x-y)  \,
        \phi \!\left((y-x) \cdot g(x)\right)\d{y}}^2}
  } \,,
\end{equation}
with $v$ as in \textbf{(v)} and $\nu$ as in (\ref{eq:nu}).  The
applicability of theorems~\ref{thm:existence} and~\ref{thm:stability}
to~\eqref{eq:General}--\eqref{eq:IGoodPTOmega} and the validity of
property~\textbf{(P)} is ensured by the following proposition.

\begin{proposition}
  \label{prop:GoodPT}
  Let $\epsilon>0$, \textbf{(v)}, \textbf{(\/$\boldsymbol{\Omega}$)},
  \textbf{($\boldsymbol{\eta}$)} and~\textbf{(g)} hold, with moreover
  $g \in (\C3 \cap \W3\infty) (\reali^N;S^{N-1})$. Let $\phi \in (\C3
  \cap \W3\infty) (\reali;\reali)$. Define $\nu$ by~\eqref{eq:nu},
  $\delta$ by~\eqref{eq:delta} and $\mathcal{I}$
  by~\eqref{eq:IGoodPT}.  Then, \eqref{eq:General}--\eqref{eq:IGoodPT}
  satisfies the assumptions of Theorem~\ref{thm:existence} and
  Theorem~\ref{thm:stability}.

  Moreover, assume that $\phi' \geq 0$ and call $\ell = \mbox{{\rm
      diam}} (\spt \eta)$. Then,
  \begin{displaymath}
    \phi'
    \geq
    0
    \qquad \mbox{and} \qquad
    \lambda
    \geq
    R \, \norma{\eta}_{\W11} \,
    \norma{\phi}_{\W1\infty} \,
    \left( 1 + \ell \, \norma{\nabla g}_{\L\infty}\right)
  \end{displaymath}
  imply that property~\textbf{(P)} holds.
\end{proposition}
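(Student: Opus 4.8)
The statement splits into two independent parts: the applicability of Theorems~\ref{thm:existence} and~\ref{thm:stability}, and the invariance property~\textbf{(P)}. For the first part the plan is to verify the three structural hypotheses in turn. Hypothesis~\textbf{(v)} is assumed outright. Hypothesis~\textbf{(I)} for the operator~\eqref{eq:IGoodPT} is exactly the content of Lemma~\ref{lem:PT}, whose regularity requirements are met here: \textbf{($\boldsymbol{\eta}$)} gives $\eta\in\Cc3$, and we have assumed $g\in(\C3\cap\W3\infty)$ and $\phi\in(\C3\cap\W3\infty)$, which is precisely what the third-order estimate~\textbf{(I.2)} feeds on. Finally, \textbf{($\boldsymbol{\nu}$)} for $\nu=g+\delta$, with $\delta=\lambda\,\alpha\,n$ as in~\eqref{eq:nu}--\eqref{eq:delta}, is checked exactly as in Proposition~\ref{prop:GoodP}: the term $g$ is controlled by~\textbf{(g)}, while $\delta$ is supported in the strip $B(\partial\Omega,r_\Omega)\cap\Omega$ and its $\C2$ and $\W11\cap\W1\infty$ bounds follow from~\textbf{($\boldsymbol{\alpha}$)} together with the regularity of $n=\nabla d_{\partial\Omega}$ granted by~\textbf{($\boldsymbol{\Omega}$)}. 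With \textbf{(v)}, \textbf{($\boldsymbol{\nu}$)} and~\textbf{(I)} in hand, both theorems apply directly.

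For property~\textbf{(P)} I would appeal to Proposition~\ref{prop:invariance}, which reduces the claim to the pointwise condition~\eqref{eq:Nagumo}, namely $\left(\nu(x)+(\mathcal{I}(\rho))(x)\right)\cdot n(x)\geq0$ for every $x\in\partial\Omega$ and every $\rho\in\L1(\reali^N;[0,R])$ with $\spt\rho\subseteq\Omega$. On such $\rho$ the integrals in~\eqref{eq:IGoodPT} and~\eqref{eq:IGoodPTOmega} agree, so the distinction between the two operators is immaterial here. At a point $x\in\partial\Omega$, \textbf{($\boldsymbol{\alpha}$)} yields $\alpha(x)=1$ and $\norma{n(x)}=1$, whence $\delta(x)\cdot n(x)=\lambda$; combined with $g(x)\cdot n(x)\geq0$ from~\textbf{(g)}, this gives $\nu(x)\cdot n(x)\geq\lambda$. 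It thus suffices to show $(\mathcal{I}(\rho))(x)\cdot n(x)\geq-\lambda$.

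The core step is to estimate $(\mathcal{I}(\rho))(x)\cdot n(x)=-\epsilon\,(\nabla I_\rho(x)\cdot n(x))/\sqrt{1+\norma{\nabla I_\rho(x)}^2}$, where $I_\rho(x)=\int_\Omega\rho(y)\,\eta(x-y)\,\phi((y-x)\cdot g(x))\,\d{y}$. Since the regularized fraction is dominated by its argument, the matter reduces to an upper bound on $\nabla I_\rho(x)\cdot n(x)$. Differentiating under the integral sign produces two contributions: the gradient falling on $\eta(x-y)$, controlled by $R\,\norma{\nabla\eta}_{\L1}\norma{\phi}_{\L\infty}$; and the gradient falling on $\phi((y-x)\cdot g(x))$, whose inner derivative is $\phi'((y-x)\cdot g(x))\,(-g(x)+(\nabla g(x))^\top(y-x))$. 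Here the hypothesis $\phi'\geq0$ enters: together with $g(x)\cdot n(x)\geq0$ it makes the $-g(x)$ part contribute a nonpositive amount to $\nabla I_\rho(x)\cdot n(x)$, so it may be discarded, while the remaining part is bounded using $\norma{y-x}\leq\ell=\mathrm{diam}(\spt\eta)$ on the support of $\eta$. Collecting terms and using $\norma{\eta}_{\W11}=\norma{\eta}_{\L1}+\norma{\nabla\eta}_{\L1}$ and $\norma{\phi}_{\W1\infty}=\norma{\phi}_{\L\infty}+\norma{\phi'}_{\L\infty}$ gives $\nabla I_\rho(x)\cdot n(x)\leq R\,\norma{\eta}_{\W11}\norma{\phi}_{\W1\infty}(1+\ell\,\norma{\nabla g}_{\L\infty})\leq\lambda$, which combined with the previous paragraph closes~\eqref{eq:Nagumo}.

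I expect the main difficulty to be precisely the non-convolution character of~\eqref{eq:IGoodPT}: because $g$ depends on $x$ inside $\phi((y-x)\cdot g(x))$, differentiating in $x$ also differentiates $g$, producing the term $(\nabla g(x))^\top(y-x)$ that is responsible for the factor $1+\ell\,\norma{\nabla g}_{\L\infty}$ and that must be controlled uniformly in $x$. The same feature is what makes Lemma~\ref{lem:PT} the technically heaviest input to the first part, since verifying~\textbf{(I.1)}--\textbf{(I.2)} requires differentiating this weighted, normalized integral up to third order and tracking the resulting $g$- and $\phi$-derivatives; the invariance estimate above is essentially the first-order shadow of that computation.
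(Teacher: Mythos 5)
Your proposal is correct, and on the invariance half it is essentially the paper's own argument: reduction to \eqref{eq:Nagumo} via Proposition~\ref{prop:invariance}, splitting $\nabla I_\rho(x)\cdot n(x)$ into the derivative falling on $\eta(x-y)$ and the one falling on $\phi\left((y-x)\cdot g(x)\right)$, discarding the $-g(x)$ contribution thanks to $\phi'\geq 0$ and $g(x)\cdot n(x)\geq 0$, and bounding the remainder via $\norma{y-x}\leq\ell$ --- these are exactly the paper's steps, down to the same cosmetic flaw (both chains really yield the threshold $\epsilon\,R\,(\cdots)$, so the stated condition on $\lambda$, which lacks the factor $\epsilon$, literally suffices only when $\epsilon\leq 1$; that glitch is the paper's, not yours). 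The genuine divergence is in the verification of \textbf{(I)}: you delegate it to Lemma~\ref{lem:PT}, which is formally licensed since \eqref{eq:IPT} and \eqref{eq:IGoodPT} are the same formula, whereas the paper re-derives \textbf{(I.1)}--\textbf{(I.2)} from scratch and proves \textbf{(I.3)} ``as in Lemma~\ref{lem:Good}''. The reason is substantive: the normalization $\sqrt{1+\norma{\cdot}^2}$ makes $\mathcal{I}$ nonlinear in $\rho$, and the paper's own proof of Lemma~\ref{lem:PT} quietly drops that denominator (it rewrites $\mathcal{I}$ as a linear integral operator and invokes linearity in $\rho$ for \textbf{(I.3)}), so it really treats the unnormalized operator; the explicit re-derivation in the proof of Proposition~\ref{prop:GoodPT} is where the denominator gets handled, each estimate carrying the extra factor $\left(1+R^2\norma{\eta}_{\W11}^2\norma{\phi}_{\W1\infty}^2\left(1+(1+\ell)\norma{g}_{\W1\infty}\right)^2\right)$ produced by differentiating the normalization. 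So your route buys brevity and is sound if one takes Lemma~\ref{lem:PT} at face value, but it silently inherits that lemma's untreated normalization; the paper's route is self-contained and is what actually closes this point. Finally, you check \textbf{($\boldsymbol{\nu}$)} for $\nu=g+\lambda\,\alpha\, n$ explicitly, which the paper's proof of this proposition omits altogether --- a point in your favor, with the caveat that the $\C2$ and $\W11\cap\W1\infty$ regularity of $\delta$ strictly needs the supplementary hypotheses of Proposition~\ref{prop:GoodP} on $\partial\Omega$, $\alpha$ and $d_{\partial\Omega}$, which the present statement does not list.
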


\subsection{The Model~\cite[(2.1)--(2.4)]{CristianiPiccoliTosin}}
\label{sub:PT}

The model in~\cite{CristianiPiccoliTosin, PiccoliTosin}, although set
therein in the space $\mathcal{M} (\reali^N;\reali^+)$ of positive
Radon measures, can be seen as a particular case of~\eqref{eq:General}
setting
\begin{equation}
  \label{eq:PT}
  \begin{array}{rcl}
    v (\rho) & = & 1
    \\
    \nu (x) & = & v_{\mbox{\tiny des}} (x)
  \end{array}
  \quad \mbox{ and } \quad
  \mathcal{I} (\rho)
  =
  \epsilon
  \int_{\reali^N}\rho(y) \,
  \nabla\eta(x-y) \; \phi\!\left((y-x) \cdot \nu(x)\right)\d{y} \,,
\end{equation}
$v_{\mbox{\tiny des}}$ being the \emph{desired} speed,
see~\cite[formula~(2.4)]{CristianiPiccoliTosin}. When $\eta$ is
radially symmetric, we
recover~\cite[formula~(2.6)]{CristianiPiccoliTosin} with $\eta (x) =
\tilde\eta (\norma{x})$ and $\tilde\eta'=f$. We leave
to~\cite{CristianiPiccoliTosin} the motivations of this model.

\begin{proposition}
  \label{prop:PT}
  Let $\epsilon>0$, \textbf{(v)}, \textbf{($\boldsymbol{\nu}$)} and
  \textbf{($\boldsymbol{\eta}$)} hold. Assume that $\phi \in (\C2\cap
  \W2\infty) (\reali;\reali)$.  Then,
  \eqref{eq:General}--\eqref{eq:PT} satisfies the assumptions of
  Theorem~\ref{thm:existence} and Theorem~\ref{thm:stability}.
\end{proposition}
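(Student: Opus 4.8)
The plan is to verify that the operator $\mathcal{I}$ defined in~\eqref{eq:PT} satisfies hypothesis~\textbf{(I)}; once this is done, Proposition~\ref{prop:PT} follows immediately, since \textbf{(v)} and~\textbf{($\boldsymbol{\nu}$)} are among its assumptions, so that Theorem~\ref{thm:existence} and Theorem~\ref{thm:stability} apply verbatim. The argument runs parallel to Lemma~\ref{lem:Good} and Lemma~\ref{lem:PT}, and the decisive simplification is that, contrary to~\eqref{eq:IGood} and~\eqref{eq:IPT}, the operator in~\eqref{eq:PT} carries no normalizing denominator: it is \emph{linear} in $\rho$. Writing $\mathcal{I}(\rho)(x) = \epsilon\int_{\reali^N}\rho(y)\,K(x,y)\,\d{y}$ with vector kernel $K(x,y) = \nabla\eta(x-y)\,\phi\bigl((y-x)\cdot\nu(x)\bigr)$, I first record that $K(\cdot,y)$ is of class $\C2$ (since $\eta\in\Cc3$, $\nu\in\C2$ and $\phi\in\C2$) and that, because $\spt\nabla\eta\subseteq\spt\eta$ is compact, $K(x,y)$ vanishes unless $x-y\in\spt\eta$. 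This compact support justifies differentiation under the integral sign and confines every factor $(y-x)$ produced by differentiating the argument $(y-x)\cdot\nu(x)$ to the ball of radius $\ell = \mathrm{diam}(\spt\eta)$; in particular $\mathcal{I}(\rho)\in\C2(\reali^N;\reali^N)$, and, being linear, $\mathcal{I}$ is continuous from $\L1$ into $\C2$ once the estimates below are in force.

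Next I would establish~\textbf{(I.1)}. Pointwise one has $\modulo{\mathcal{I}(r)(x)}\leq\epsilon\,\norma{\nabla\eta}_{\L\infty}\norma{\phi}_{\L\infty}\norma{r}_{\L1}$, and differentiating $K$ once in $x$ — which brings in $\nabla^2\eta$, $\phi'$ and the first derivatives of $(y-x)\cdot\nu(x)$, all bounded on $\spt\eta$ thanks to $\nu\in\W1\infty$ and $\modulo{y-x}\leq\ell$ — gives $\norma{\nabla\mathcal{I}(r)}_{\L\infty}\leq C\norma{r}_{\L1}$, hence the $\W1\infty$ bound. For the $\L1$ bound on $\div\mathcal{I}(r)$ I would use Fubini together with compact support: for fixed $y$ the integrand of $\div\mathcal{I}(r)$ is supported in $y+\spt\eta$ and bounded uniformly in $y$, so that $\int_{\reali^N}\modulo{\div\mathcal{I}(r)(x)}\,\d{x}\leq\epsilon\int_{\reali^N}\modulo{r(y)}\bigl(\int_{y+\spt\eta}\modulo{\cdots}\,\d{x}\bigr)\d{y}\leq C\norma{r}_{\L1}$. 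Since all constants are independent of $r$, the increasing map $C_I(s)=Cs$ serves in~\textbf{(I.1)}. The Lipschitz estimates~\textbf{(I.3)} are then a free consequence of linearity: as $\mathcal{I}(r_1)-\mathcal{I}(r_2)=\mathcal{I}(r_1-r_2)$, the very same computations applied to $r_1-r_2$ yield all the bounds of~\textbf{(I.3)} with $K_I=C$, the $\L\infty$ one from the pointwise estimate and the $\L1$ ones from the Fubini argument.

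The real work, and the step I expect to be the main obstacle, is~\textbf{(I.2)}, the bound on $\norma{\nabla\div\mathcal{I}(r)}_{\L1}$, where the kernel must be differentiated three times in $x$. The contributions in which all derivatives fall on $\eta$ require $\nabla^3\eta$ and are controlled because $\eta\in\Cc3$; those reaching $\phi$ need $\phi''$ and are controlled by $\phi\in\W2\infty$. The delicate term is the one in which two derivatives strike the angular weight $\phi\bigl((y-x)\cdot\nu(x)\bigr)$, producing the \emph{second} derivatives of $\nu$: schematically $\sum_{m,i}(\partial_m\eta)(x-y)\,\phi'\,(y_i-x_i)\,\partial_{x_\ell}\partial_{x_m}\nu_i(x)$ in the $\ell$-th component. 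The factors $(y_i-x_i)$ are again bounded by $\ell$ on $\spt\eta$ and every $\eta$- and $\phi$-factor is bounded and integrable, so after Fubini in $x$ the whole estimate hinges on controlling these second-order derivatives of $\nu$ against the $\L1$ mass of $r$.

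This last point is the genuine crux. It is exactly the type of term that, for the more nested operator~\eqref{eq:IPT}, forced the assumption $\nu\in\W3\infty$ in Lemma~\ref{lem:PT}, which signals that the second-order behaviour of $\nu$ inside the weight is what must be handled with the greatest care here. Accordingly, I expect the bulk of the effort to go into absorbing this $\nabla^2\nu$-contribution into $C_I(\norma{r}_{\L1})$, combining the compact support of $\eta$ with the full regularity carried by~\textbf{($\boldsymbol{\nu}$)} — in particular the integrability and boundedness information on $\div\nu$ — while all remaining pieces of $\nabla\div\mathcal{I}(r)$ are routine. Once~\textbf{(I.2)} is secured, hypothesis~\textbf{(I)} holds in full and Proposition~\ref{prop:PT} is proved.
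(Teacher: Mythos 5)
Your overall strategy --- verify hypothesis \textbf{(I)} for the operator in \eqref{eq:PT} and then invoke Theorems~\ref{thm:existence} and~\ref{thm:stability} --- is exactly the paper's, and your treatment of \textbf{(I.1)} and \textbf{(I.3)} (direct kernel estimates, compact support of $\eta$, Fubini, and linearity of $\mathcal{I}$ in $\rho$) coincides with the paper's proof. However, you do not actually prove \textbf{(I.2)}. You correctly isolate the dangerous contribution to $\nabla\div\mathcal{I}(r)$, namely
\[
\epsilon\sum_{m,i}\int_{\reali^N} r(y)\,\partial_m\eta(x-y)\,
\phi'\!\left((y-x)\cdot\nu(x)\right)(y_i-x_i)\,
\partial_\ell\partial_m\nu_i(x)\,\d{y}\,,
\]
but then you defer it, saying you ``expect'' to absorb it by combining the compact support of $\eta$ with the regularity of $\div\nu$ granted by \textbf{($\boldsymbol{\nu}$)}. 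That route cannot work: the $\eta$-derivative index $m$ is contracted against one index of the Hessian, while the other Hessian index $\ell$ and the component index $i$ remain free, so the term involves \emph{all} mixed second derivatives $\partial_\ell\partial_m\nu_i$ and does not reduce to anything expressible through $\nabla\div\nu$. Since at second order \textbf{($\boldsymbol{\nu}$)} only controls $\div\nu$ (in $\W11\cap\W1\infty$), no amount of Fubini or support considerations turns that information into a bound on this term.

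The paper closes this step by brute force, not by cancellation: after Fubini it bounds the term by
$\epsilon\,\norma{\rho}_{\L1}\,\norma{\nabla\eta}_{\L1}\,\norma{\phi'}_{\L\infty}\,\ell\,\norma{\nabla^2\nu}_{\L\infty}$,
i.e.\ it simply places the sup norm of the \emph{full Hessian} of $\nu$ into the constant $C_I$ (in the paper's proof this appears as $\ell\,\norma{\nabla^2 g}_{\L\infty}$, the symbol $g$ being a notational leftover from Lemma~\ref{lem:PT}; in \eqref{eq:PT} the weight's direction is $\nu$). So the missing ingredient in your argument is precisely a bound on $\nabla^2\nu$ --- effectively $\nu\in\W2\infty$, which is strictly more than the literal content of \textbf{($\boldsymbol{\nu}$)}, where $\nu\in\C2\cap\W1\infty$ gives continuity but not boundedness of the Hessian. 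Your instinct that this term is the crux, and that the stated hypotheses are thin exactly there, is sound; but as written your proof has a gap at the one step the proposition genuinely requires, and the repair you sketch (via $\div\nu$) would fail.
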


In particular, above we prove that if the initial datum $\rho_0$ is in
$\L1 (\reali^N;[0,R])$, then the corresponding solution satisfies the
same bounds.  This ensures that neither focusing to any Dirac delta
takes place, nor values of the density above $R$ can be expected. (For
the sake of completeness, we note that the conditions $\eta \geq 0$
and $\int_{\reali^N} \eta\d{x}=1$ is in the case
of~\eqref{eq:General}--\eqref{eq:PT} neither necessary, nor meaningful
and can be replaced by $\eta \in (\Cc3 \cap \W2\infty \cap \W31)
(\reali^N;\reali)$, see the proof in Section~\ref{sec:TD} for more
details.).

\section{Qualitative Properties}
\label{sec:NR}

This section is devoted to sample numerical integrations
of~\eqref{eq:General}. In all the examples below, we choose as vector
field $\nu = \nu (x)$ the geodesic one, computed solving numerically
the eikonal equation. This leads to the formation of congested queues
near to the door jambs. From the modeling point of view, a more
refined choice would consist in choosing $\nu$ so that most
pedestrians are directed towards the central part of the exit. This
choice increases the difficulties neither of the analytical treatment
nor of the numerical integration but imposes the introduction of
several further parameters.

The algorithm used is the classical Lax-Friedrichs method with
dimensional splitting. As usual, a fixed grid $(x_i,y_j)$ for $i_0,
\ldots, n_x$ and $j=1, \ldots, n_y$ is introduced and the density
$\rho$ is approximated through the values $\rho_{ij}$ on this grid. At
every time step, the convolution in vector $\mathcal{I} (\rho)$ is
then computed through products of the type $A_{ih} \rho_{hk} B_{kj}$,
for two fixed matrices $A$ and $B$ depending only on the kernel
$\eta$.

All the examples below are set in $\reali^2$, due to obvious
visualization problems in higher dimensions. As is well known, the
analytical techniques are essentially independent from the dimension
as also the numerical algorithm. The time of integration obviously
increases with the dimension.

\subsection{Lane Formation}
\label{subs:LaneFormation}

A widely detected pattern formed in the context of crowd dynamics is
that of \emph{lane formation}, see for instance~\cite{HelbingEtAl2002,
  HelbingJohansson2007, HoogendoornBovy2003, PiccoliTosin2009}. This
feature has been often related to the specific qualities of each
individual, i.e.~it has usually been explained from a microscopic
point of view. Here, in a purely macroscopic setting, we show that the
solutions to~\eqref{eq:General}--\eqref{eq:IGood} also display this
pattern formation phenomenon, with pedestrian self organizing along
lanes.

\begin{figure}[htpb]
  \centering
  \includegraphics[width=0.32\textwidth%, trim=75 120 20  120
]{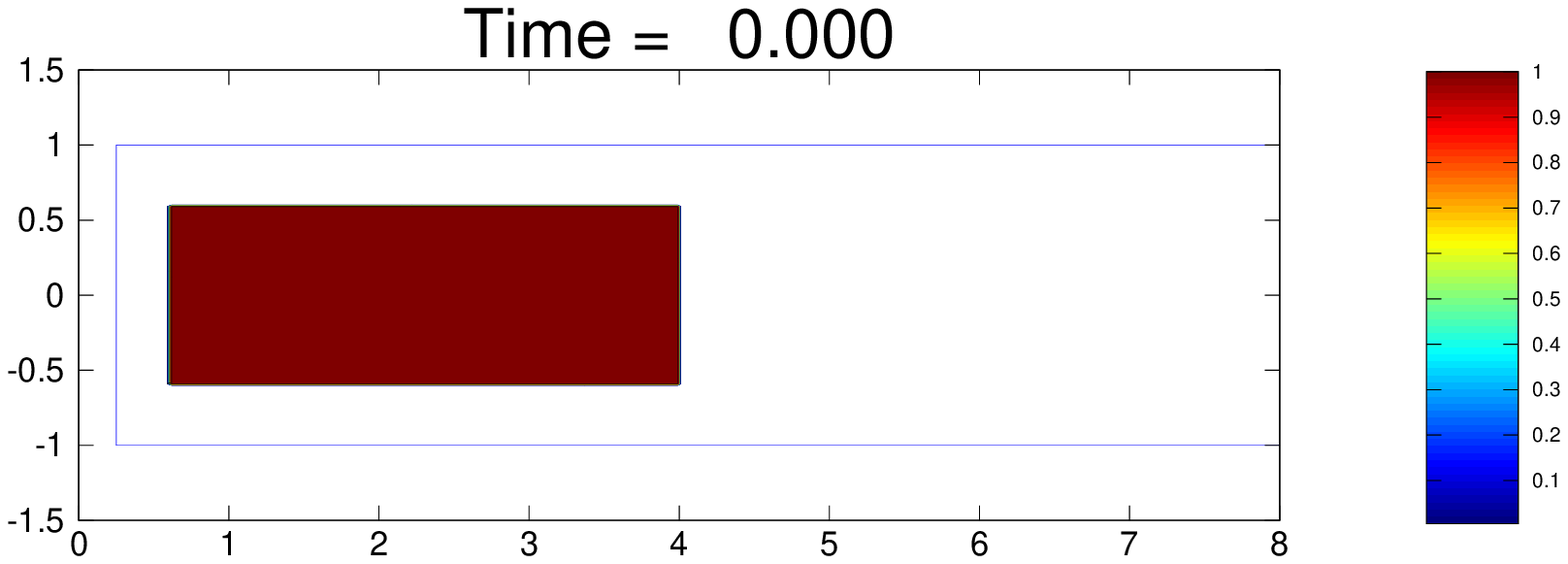}\hfil%
  \includegraphics[width=0.32\textwidth%, trim=75 120 20  120
]{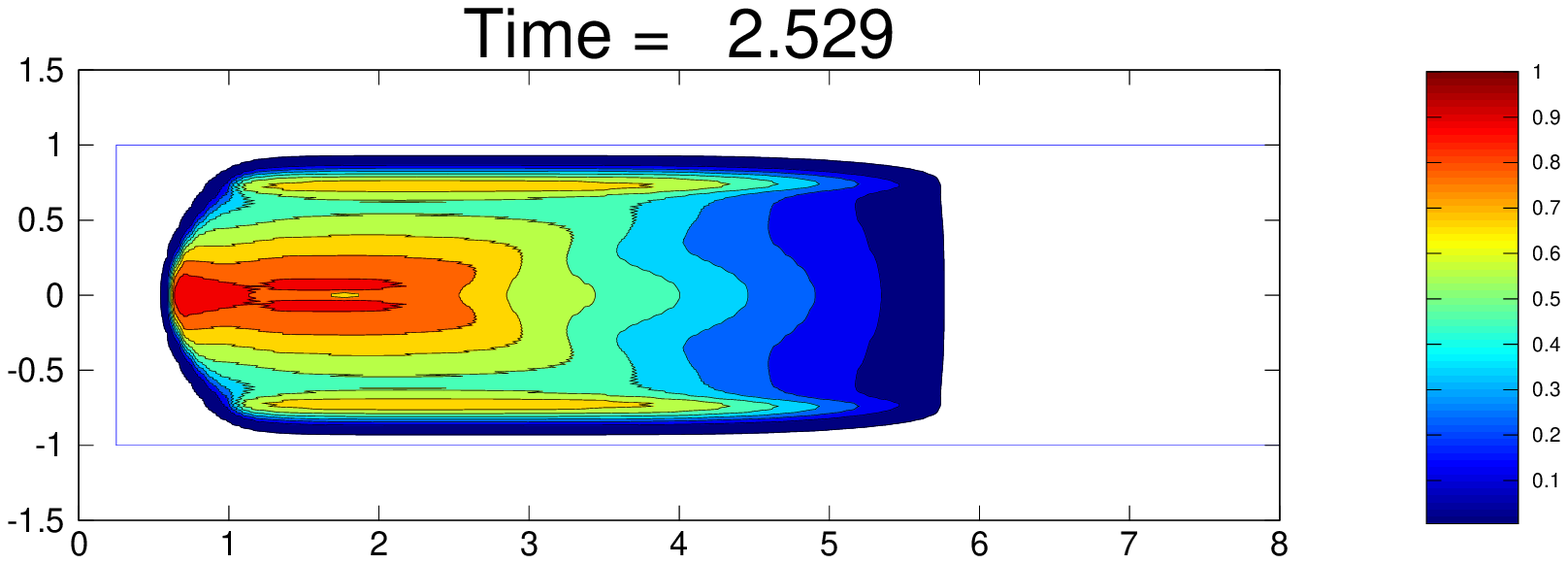}\hfil%
  \includegraphics[width=0.32\textwidth%,  trim=75 120 20 120
]{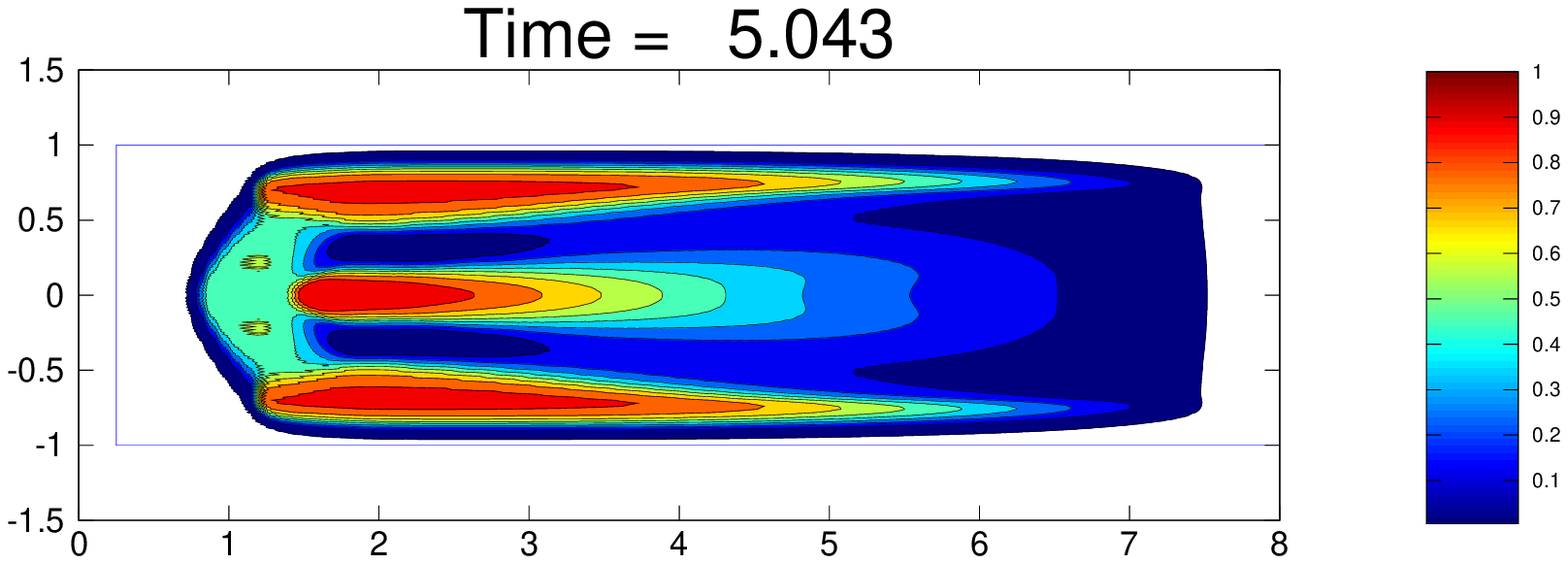}\\
  \includegraphics[width=0.32\textwidth%, trim=75 120 20  120
]{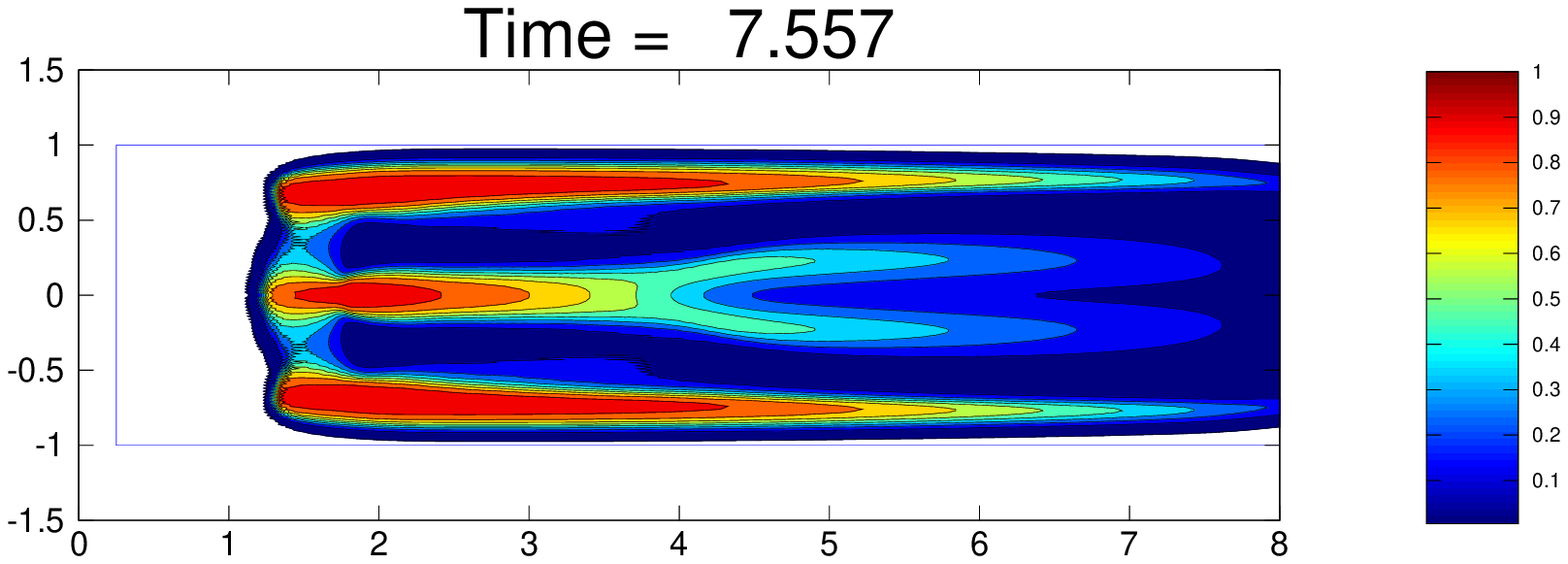}\hfil%
  \includegraphics[width=0.32\textwidth%, trim=75 120 20  120
]{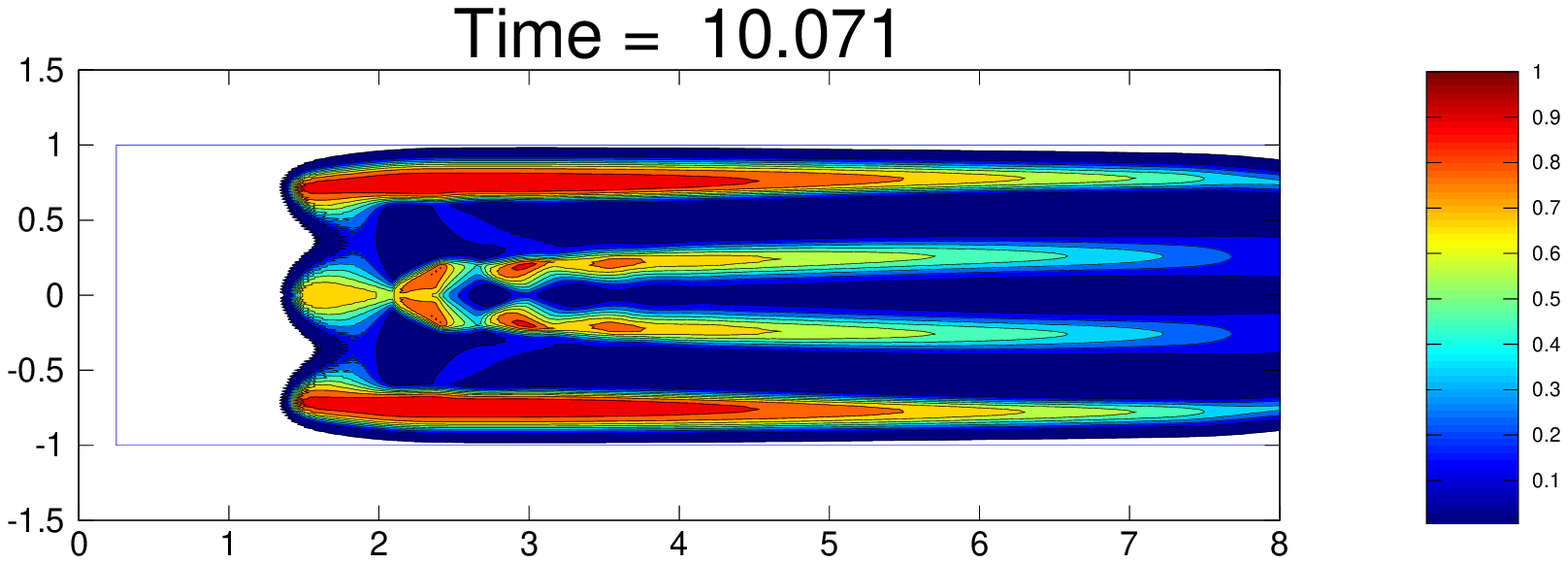}\hfil%
  \includegraphics[width=0.32\textwidth%, trim=75 120 20  120
]{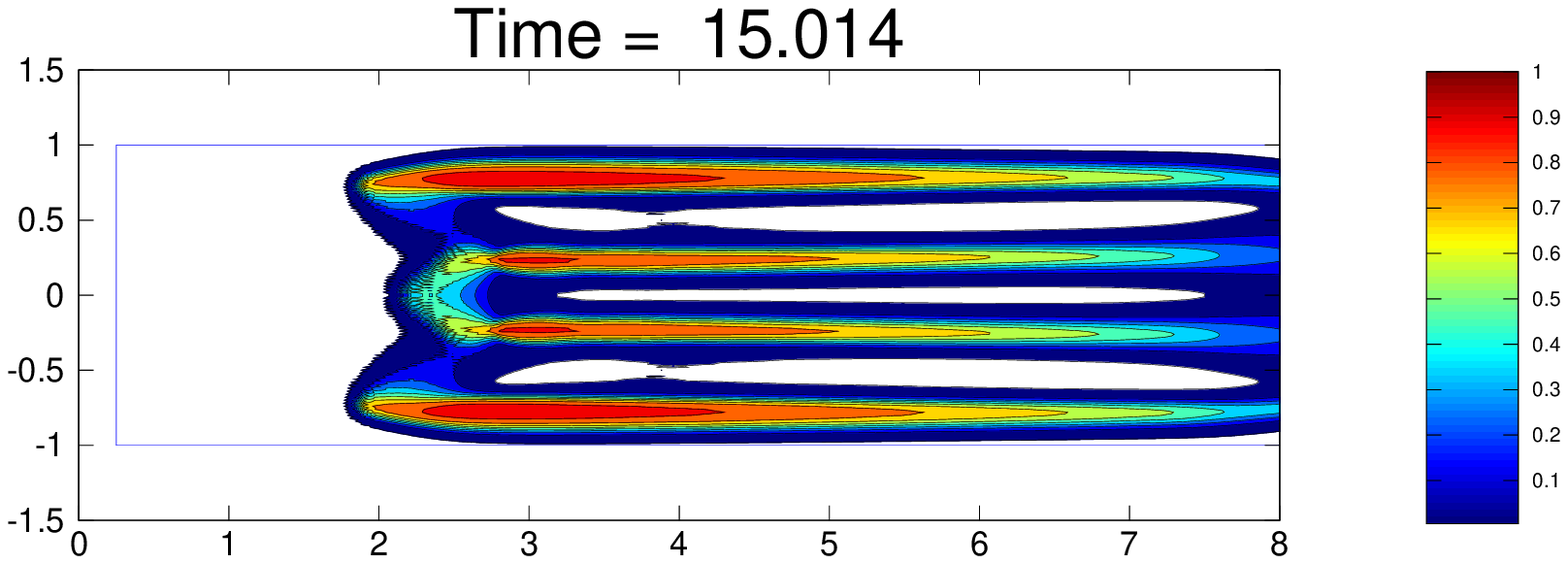}%
  \caption{Solution
    to~\eqref{eq:General}--\eqref{eq:IGood}--\eqref{eq:nu}--\eqref{eq:lanes}
    at times $t=0$, $2.529$, $5.043$, $7.557$, $10.071,\,
    15.014$. First 3 lanes are formed, then the middle lane bifurcates
    forming the fourth lane.}
  \label{fig:lanes}
\end{figure}

Consider~\eqref{eq:General}--\eqref{eq:IGood} with
\begin{equation}
  \label{eq:lanes}
  \begin{array}{@{}rcl@{\quad}rcl@{}@{\quad}rcl@{}}
    \nu (x)
    & = &
    \left[
      \begin{array}{@{\,}c@{\,}}
        1\\0
      \end{array}
    \right]
    + \delta (x)\,,
    &
    \eta (x)
    & = &
    \left[1-\left(\frac{x_1}{r}\right)^2\right]^3
    \left[1-\left(\frac{x_2}{r}\right)^2\right]^3 \,
    \caratt{[-r,r]^2} (x)\,,
    &
    r & = & \frac45\,,
    \\[15pt]
    v (\rho)
    & = &
    \frac12 \, (1-\rho)\,,
    &
    \rho_0 (x)
    & = &
    \caratt{[3/5, 4] \times [-3/5, 3/5]} (x)\,,
    &
    \epsilon
    & = &
    \frac25\,,
  \end{array}
\end{equation}
where $\delta = \delta(x)$ describes the discomfort due to walls: it
is a vector normal to the walls, pointing inward, with intensity $3/2$
along the walls, decreasing linearly to $0$ at a distance $3/10$ from
the walls.  As Figure~\ref{fig:lanes} shows, the initially uniform
crowd distribution evolves into a patterned configuration, first with
4 lanes and then with 5 lanes. The number of lanes depends on the size
of the support of the convolution kernel $\eta$. Indeed, keeping all
functions and parameters fixed, but not the parameter $r$, which
determines the size of $\spt \eta$, we obtain patterns differing in
the number of lanes, see Figure~\ref{fig:DifferentLanes}.
\begin{figure}[htpb]
  \centering
  \includegraphics[width=0.32\textwidth%, trim=75 120 10  120
]{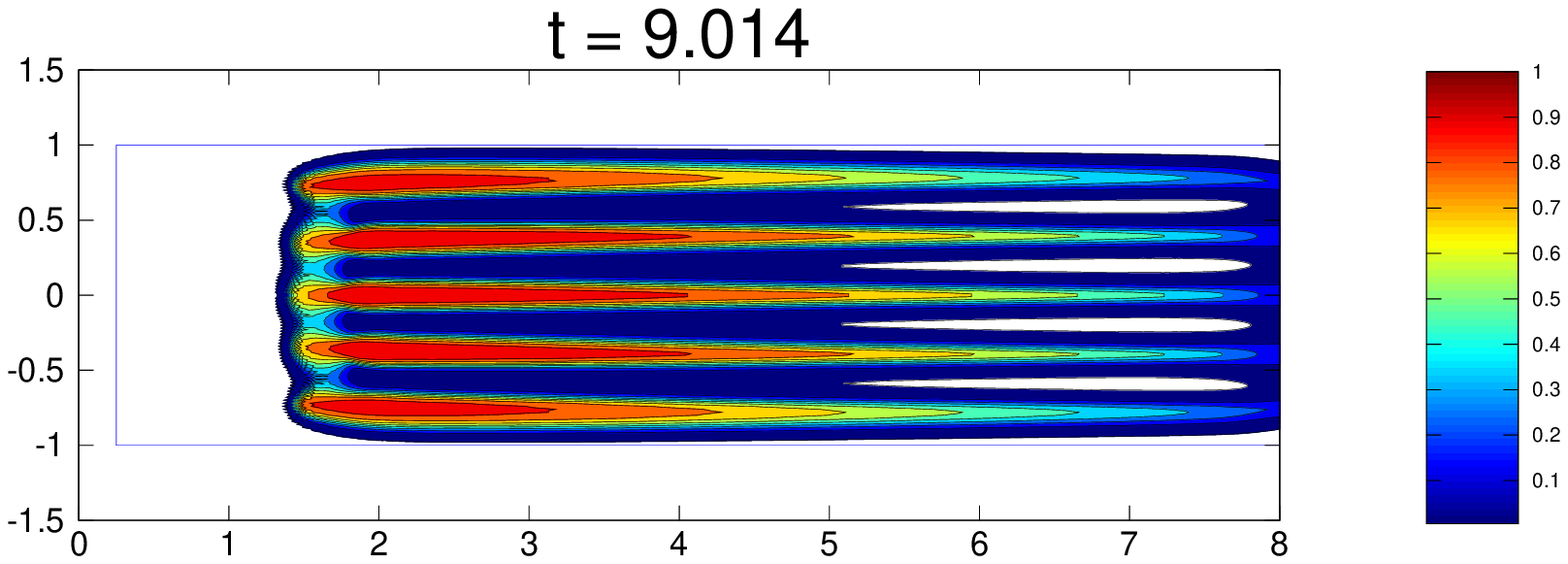}%
  \includegraphics[width=0.32\textwidth%, trim=75 120 10  120
]{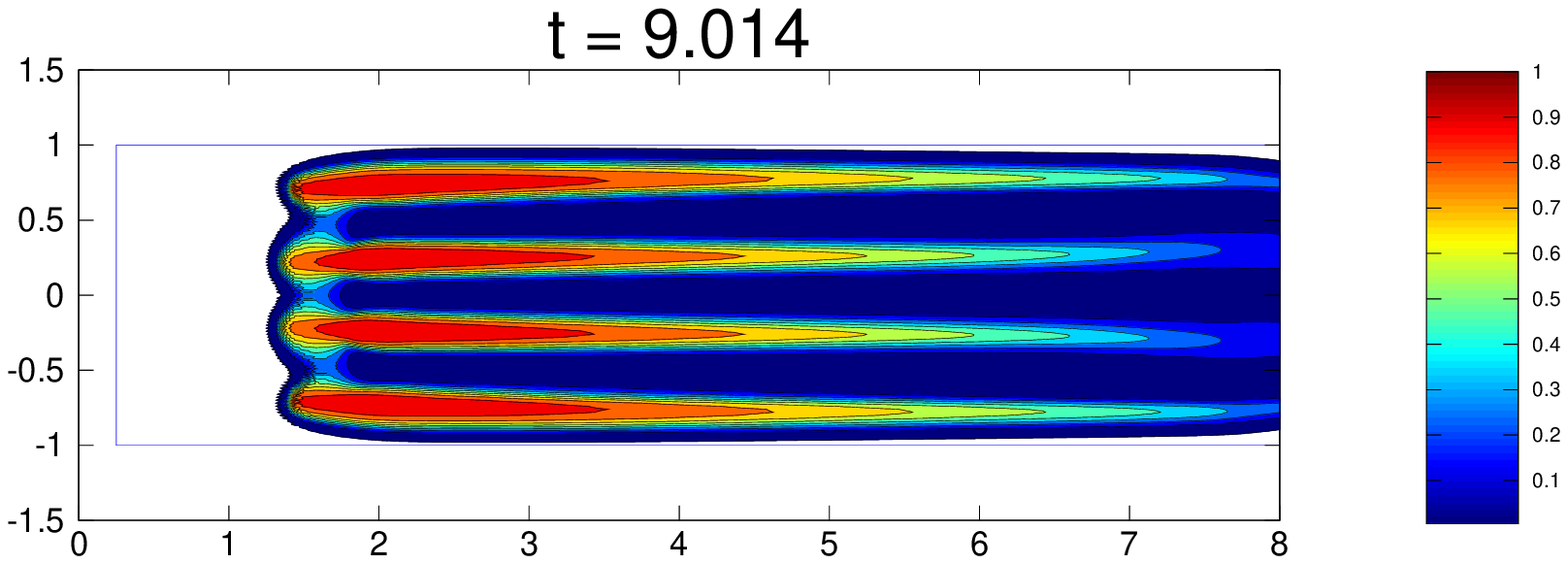}%
  \includegraphics[width=0.32\textwidth%, trim=75 120 10 120
]{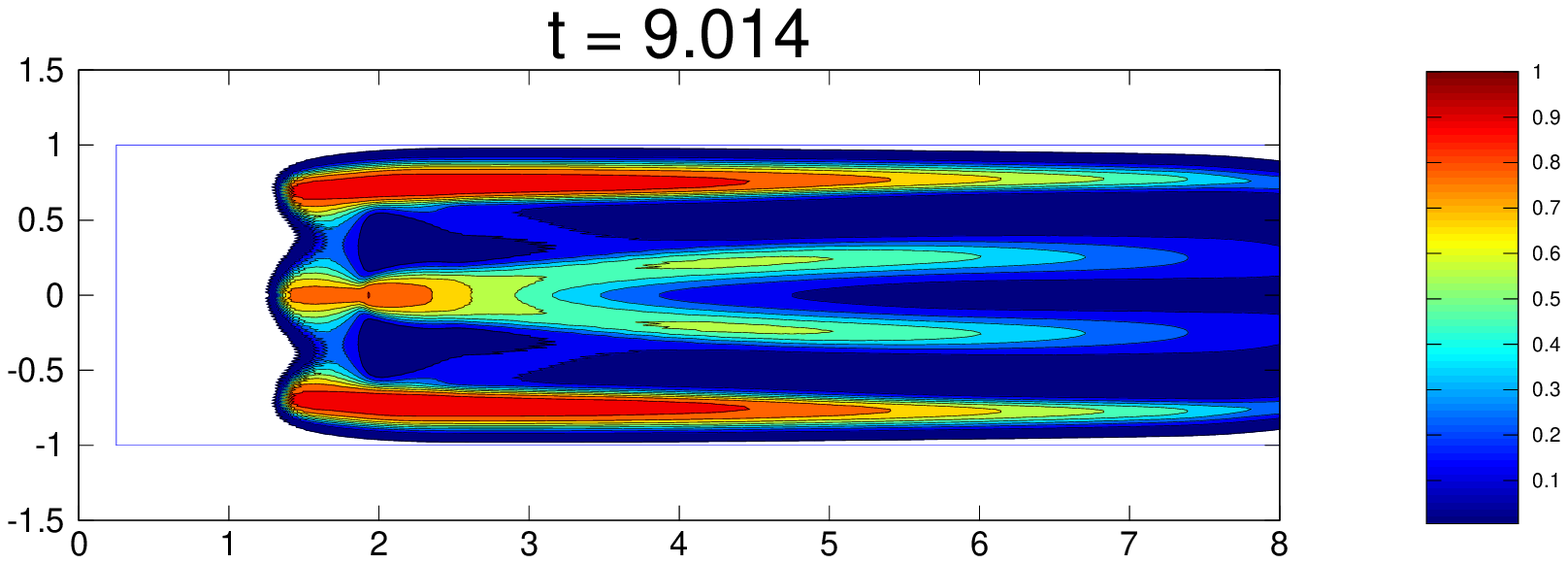}\\
  \includegraphics[width=0.32\textwidth%, trim=75 120 10  120
]{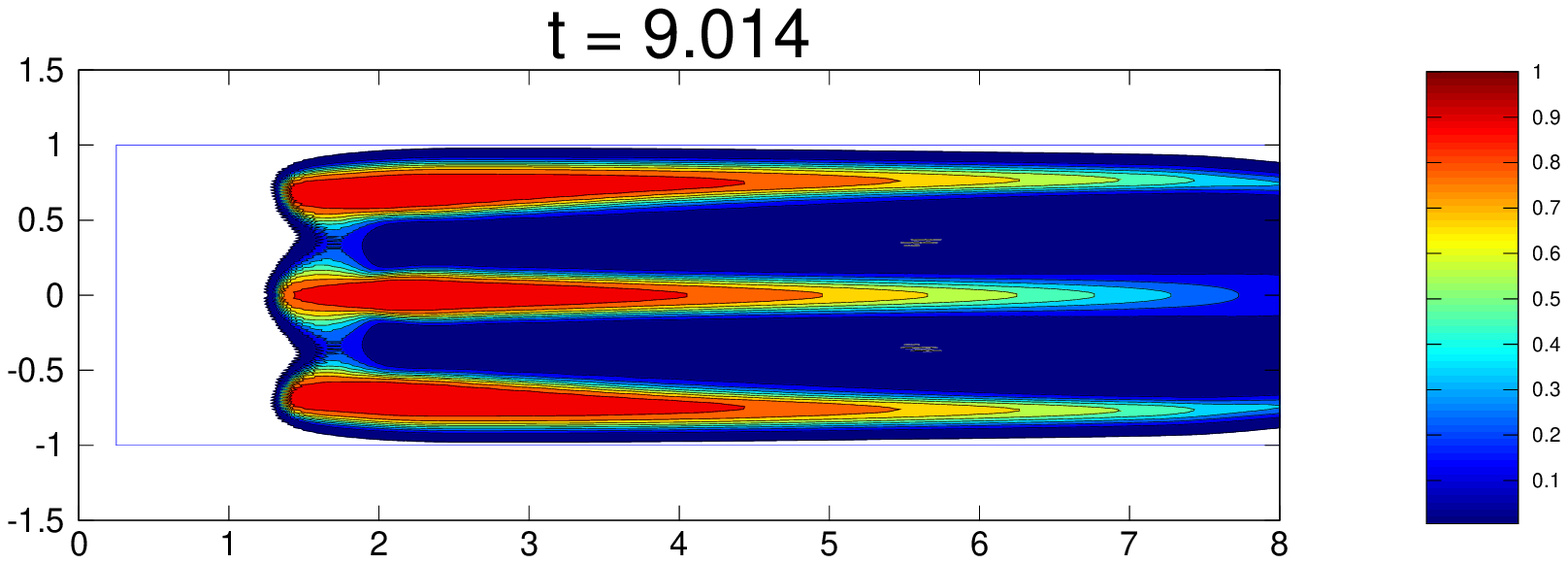}%
  \includegraphics[width=0.32\textwidth%, trim=75 120 10  120
]{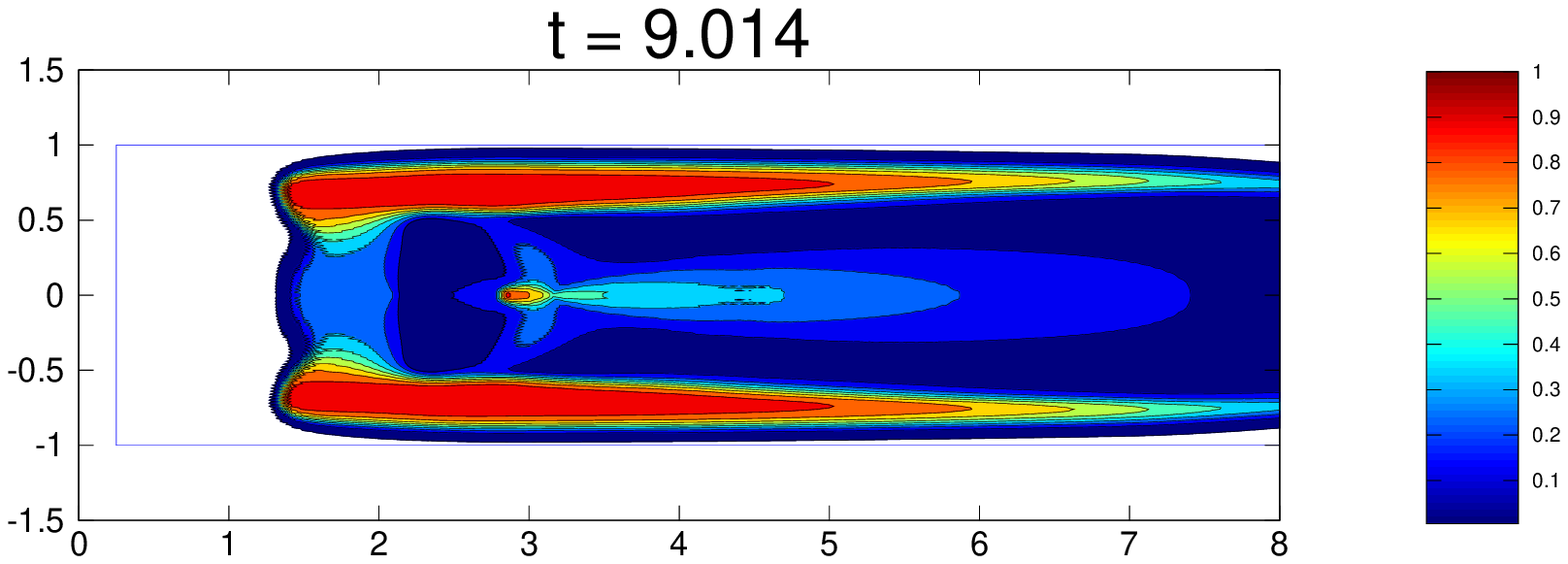}%
  \includegraphics[width=0.32\textwidth%, trim=75 120 10  120
]{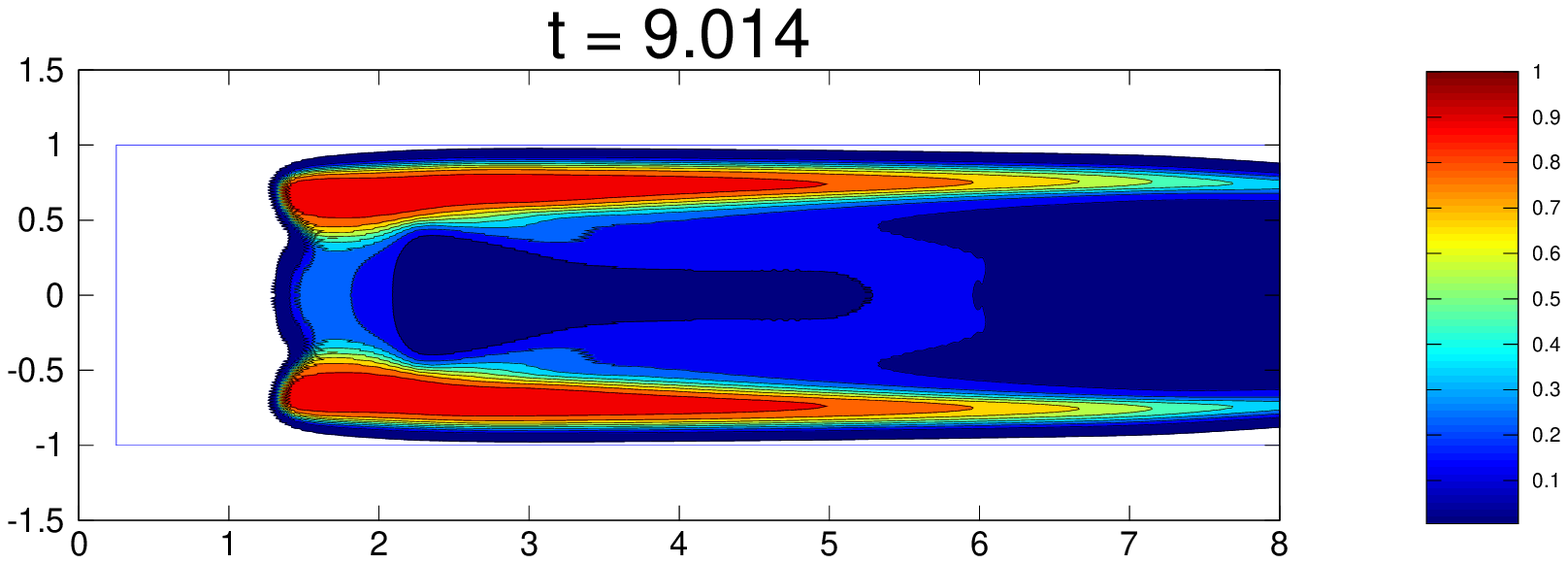}
  \caption{Solution
    to~\eqref{eq:General}--\eqref{eq:IGood}--\eqref{eq:nu}--\eqref{eq:lanes}
    computed at time $t=9.014$ and with $\spt\eta$ with radius
    $r=0.5,\, 0.6,\, 0.8,\, 0.9,\, 1.0,\, %1.1,\, 1.2,\, 1.3,\,
    1.4$. Note that as $r$ increases, the number of lanes diminishes.}
  \label{fig:DifferentLanes}
\end{figure}

The formation of lanes is a rather stable phenomenon. Indeed,
Figure~\ref{fig:stab} shows the result of the integration
of~\eqref{eq:General}--\eqref{eq:IGood}--\eqref{eq:nu}--\eqref{eq:lanes}
computed at time $t=0,\,5,\,10$ with $r=3/5$ (above) and $r=9/10$
(below) with initial data different from that in~\eqref{eq:lanes}.
\begin{figure}[htpb]
  \centering
  \includegraphics[width=0.32\textwidth%, trim=75 120 10  120
]{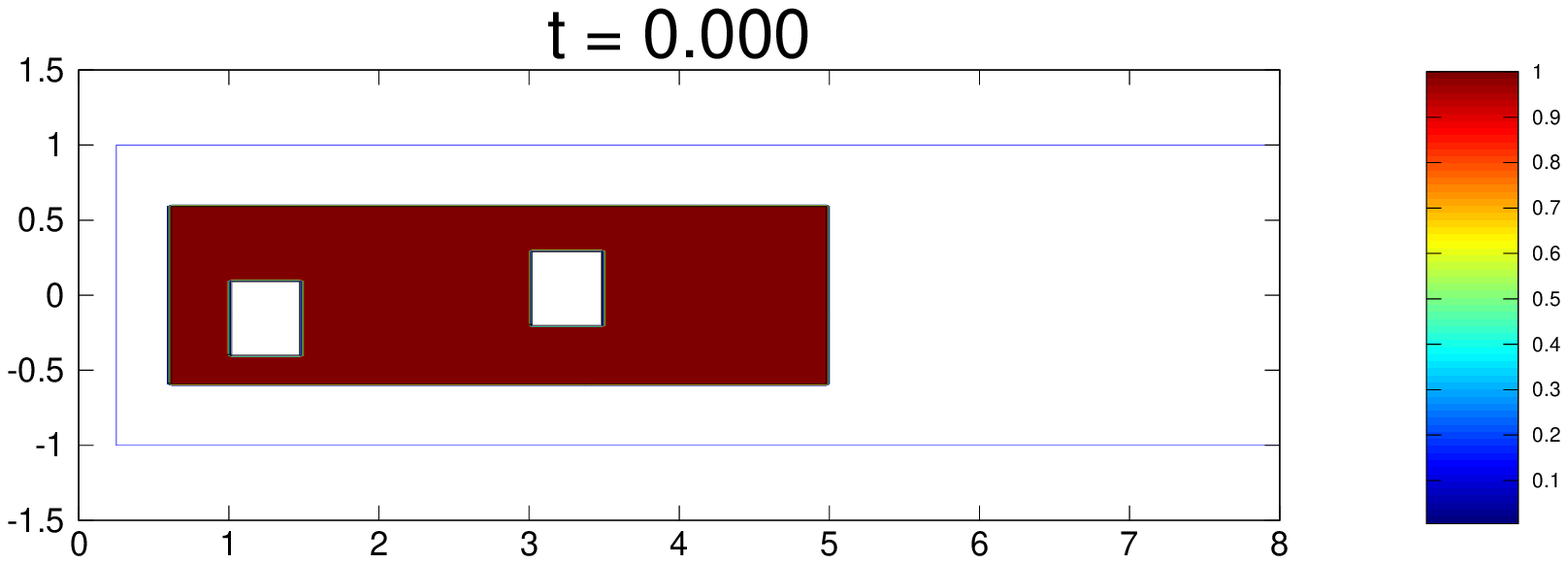}%
  \includegraphics[width=0.32\textwidth%, trim=75 120 10  120
]{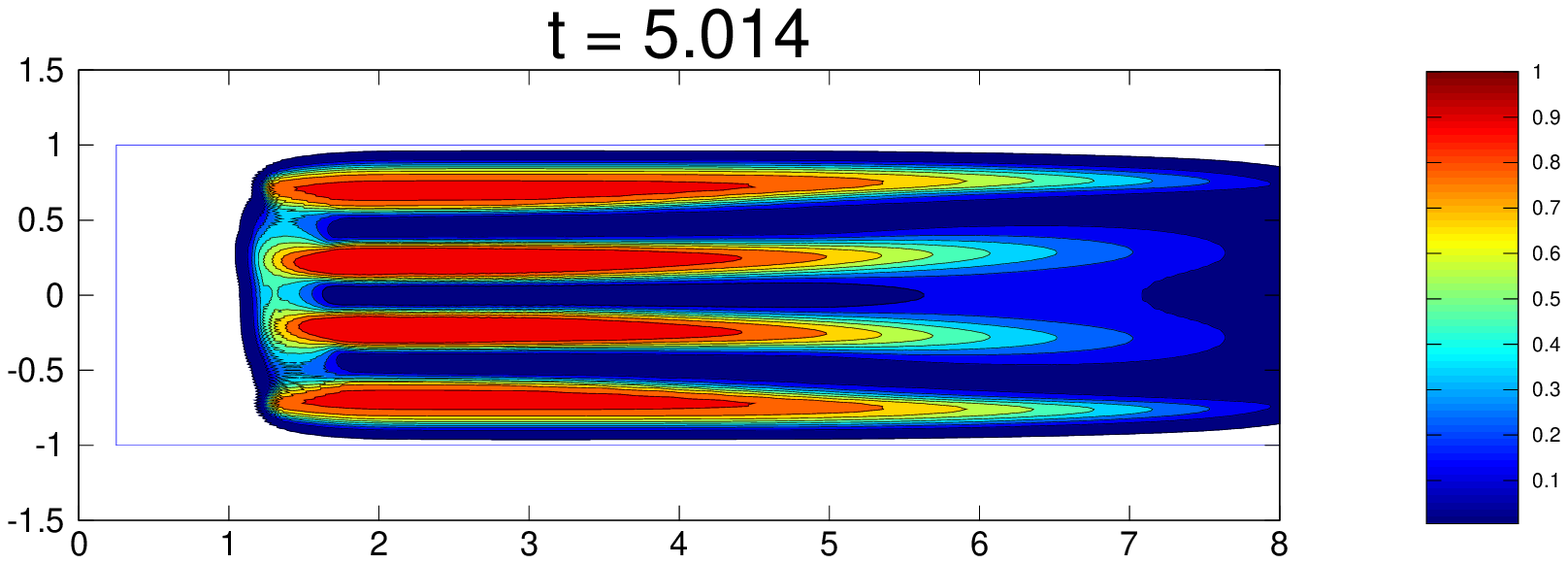}%
  \includegraphics[width=0.32\textwidth%, trim=75 120 10 120
]{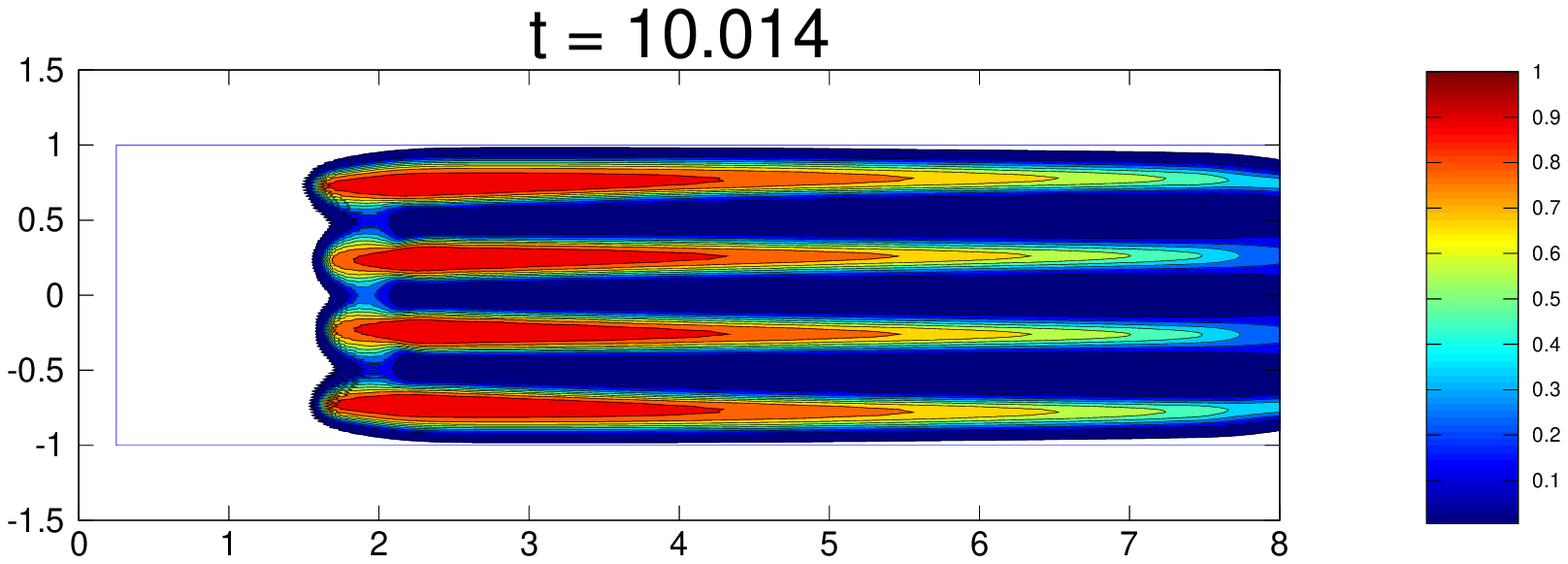}\\
  \includegraphics[width=0.32\textwidth%, trim=75 120 10  120
]{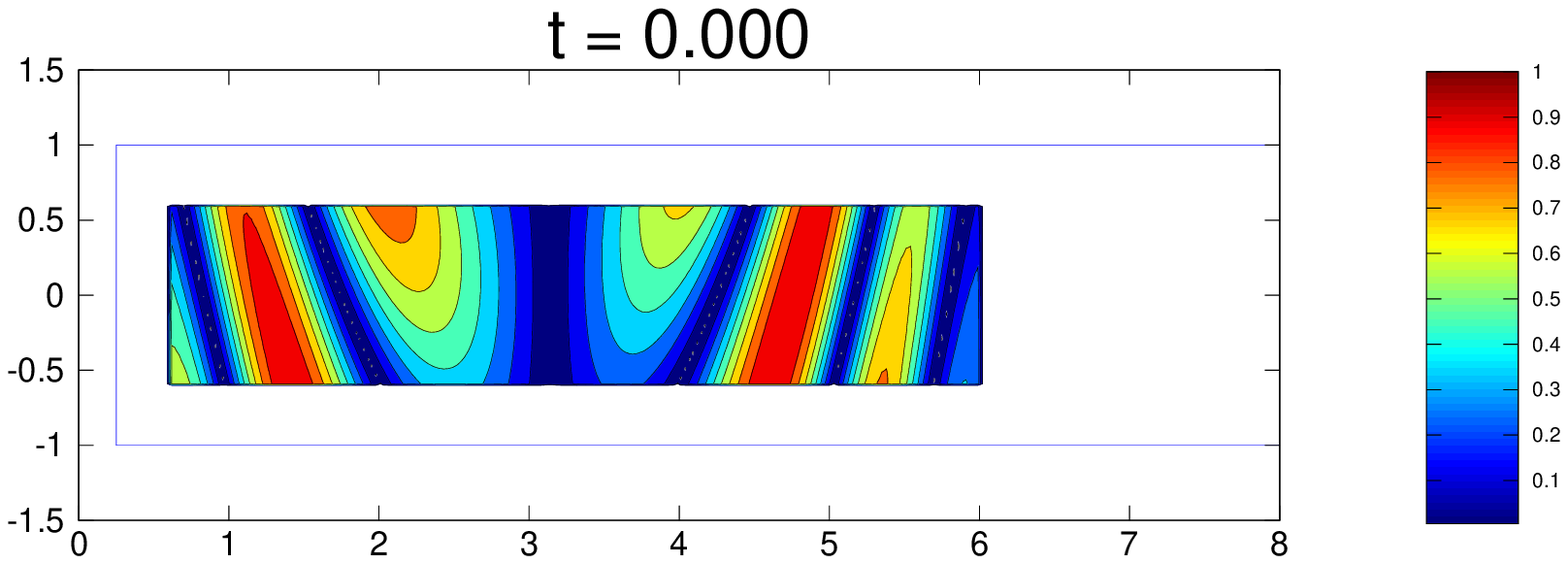}%
  \includegraphics[width=0.32\textwidth%, trim=75 120 10  120
]{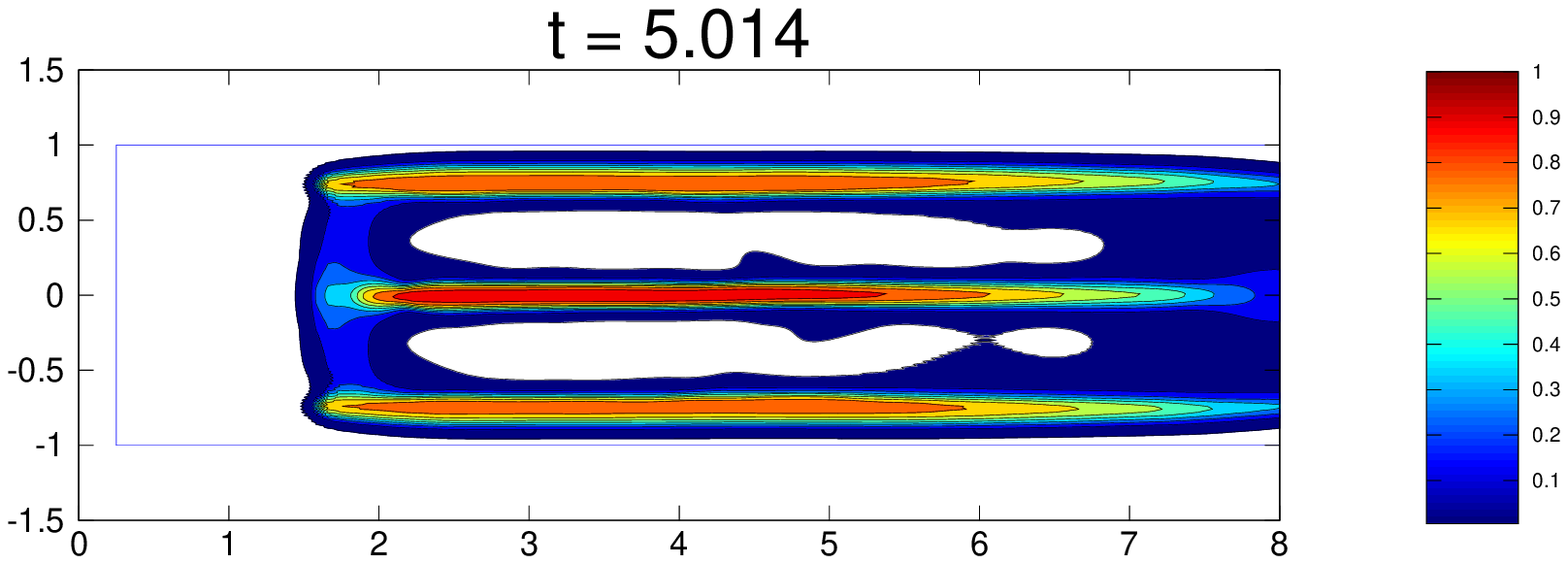}%
  \includegraphics[width=0.32\textwidth%, trim=75 120 10  120
]{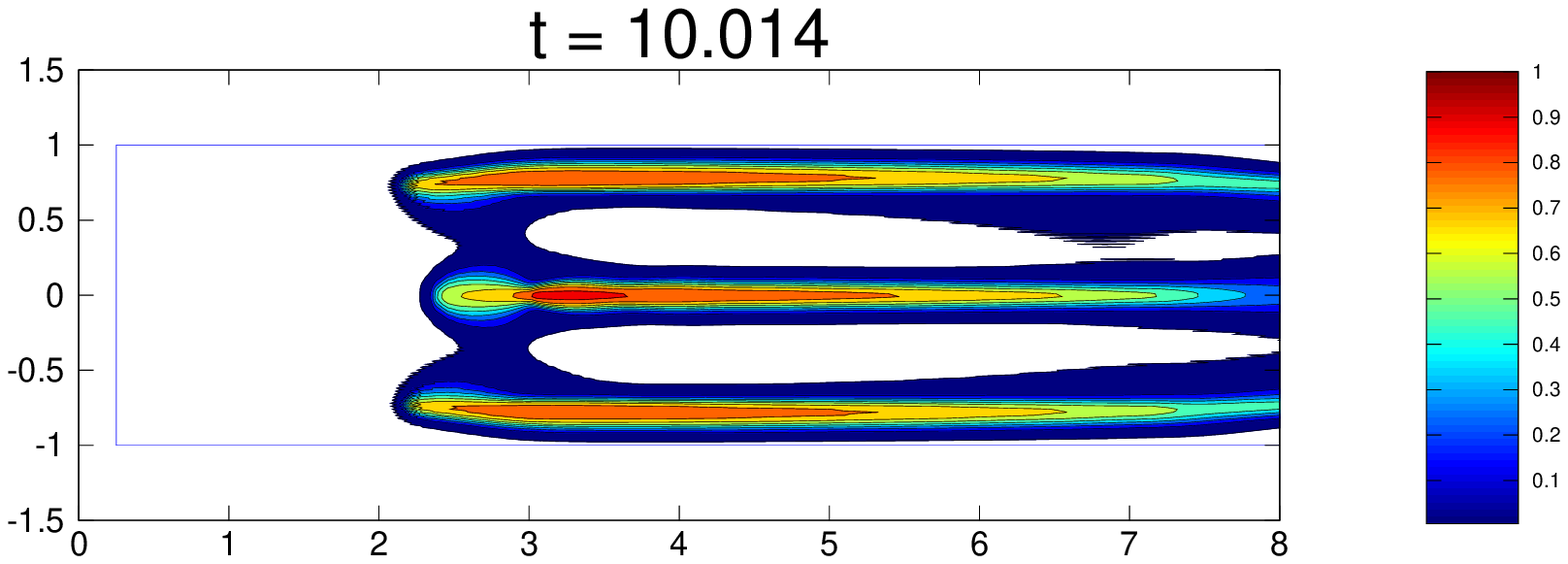}
  \caption{Solution
    to~\eqref{eq:General}--\eqref{eq:IGood}--\eqref{eq:nu}--\eqref{eq:lanes}
    with different initial data at time $t = 0,\, 5.014,\, 10.014$ and
    above with $r = 0.6$, below with $r = 0.9$. Note that above $4$
    lanes form and below $5$, similarly to what obtained in
    Figure~\ref{fig:lanes}.}
  \label{fig:stab}
\end{figure}
In both cases, lanes are formed similar to the corresponding
situations in Figure~\ref{fig:DifferentLanes}.

We also note that in the present framework, using the terms
in~\cite[Section~5.4]{PiccoliTosin2009}, lanes form also in an
\emph{isotropic} setting. Indeed, the integrations in
figures~\ref{fig:lanes}--\ref{fig:DifferentLanes} were obtained with
individuals able to see both forward \emph{and} behind.

\subsection{Evacuation of a Room}
\label{subs:Evacuation}

A standard application of macroscopic models for crowd dynamics is the
minimization of evacuation times. The present setting applies to
general geometries, see the
assumption~\textbf{($\boldsymbol{\Omega}$)}. Here we show
that~\eqref{eq:General} captures reasonable features of the escape
dynamics.

We consider a room with an exit, as in Figure~\ref{fig:Initial}. The
vector $\nu = \nu (x)$ is chosen as the unit vector tangent at $x$ to
the geodesic connecting $x$ to the exit. The discomfort $d = d (x)$ is
a vector normal to the walls, pointing inward, with intensity $1$
along the walls, decreasing linearly to $0$ at a distance $1/2$ from
the walls. The other quantities are in~\eqref{eq:Evacuation}.
\begin{figure}[htpb]
  \begin{minipage}[c]{0.37\linewidth}
    \includegraphics[width=\textwidth%, trim=60 120 50    120
]{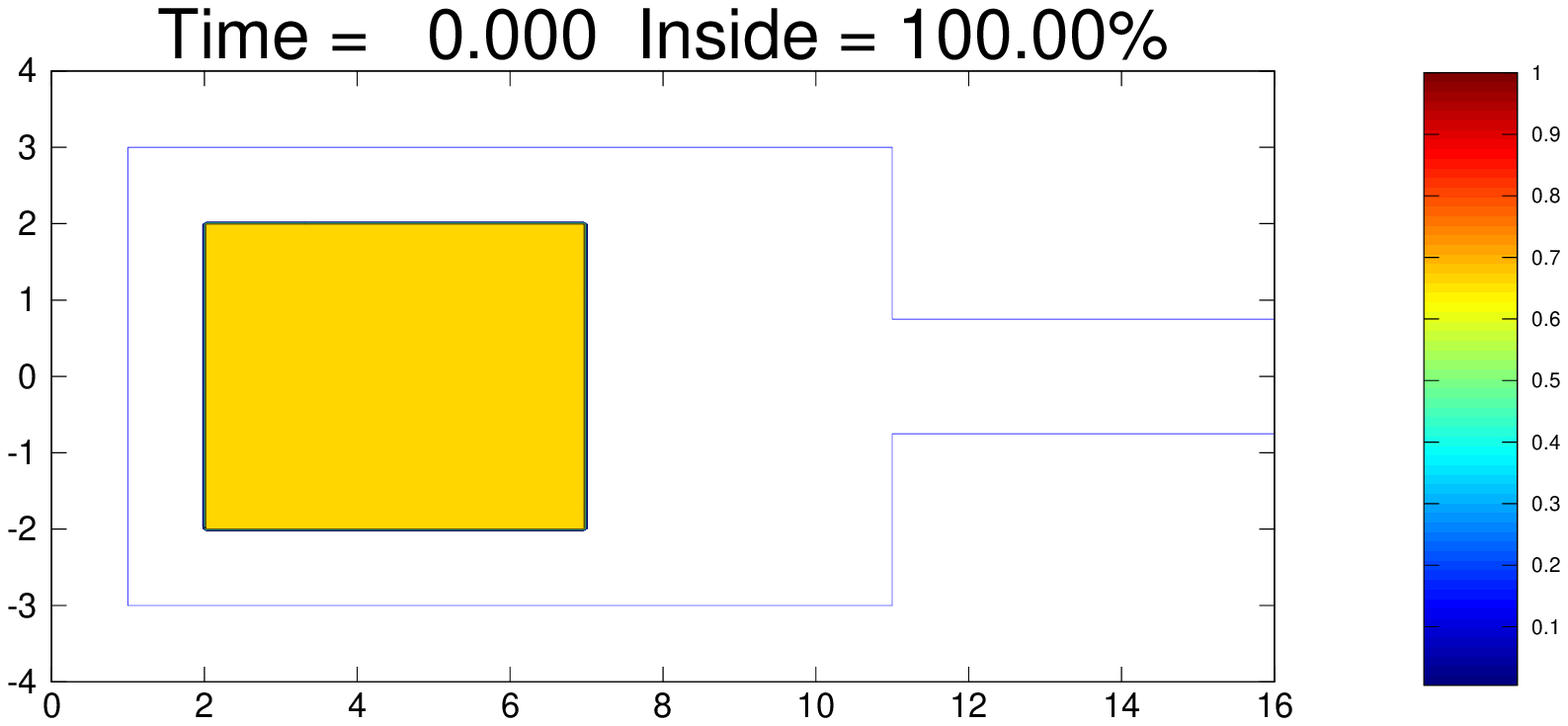}
    \caption{Initial datum and room geometry considered in
      \S~\ref{subs:Evacuation}.}
    \label{fig:Initial}
  \end{minipage}%
  \begin{minipage}[c]{0.63\linewidth}
    \begin{equation}
      \label{eq:Evacuation}
      \begin{array}{@{}rcl@{}}
        v (\rho) & = & 6 (1-\rho)
        \\
        \eta (x)
        & = &
        \left[1-\left(\frac{x_1}{r}\right)^2\right]^3
        \left[1-\left(\frac{x_2}{r}\right)^2\right]^3 \,
        \caratt{[-r,r]^2} (x)\,,
        \\
        r & = & 0.6
        \\
        \rho_0 (x) & = & 0.75 \, \caratt{[2,7] \times [-2,2]} (x)
        \\
        \epsilon & = & 0.4
        \\
        \\
      \end{array}
    \end{equation}
  \end{minipage}
\end{figure}
Keeping the above parameters fixed, as well as the outer walls of the room, we insert various obstacles (columns) to direct the movement of the crowd.

First, Figure~\ref{fig:Evacuation1}, first line, shows an integration
of the case with two columns that direct people towards the exit.
\begin{figure}[htpb]
  \centering
  \includegraphics[width=0.32\textwidth%, trim=75 110 10 120
]{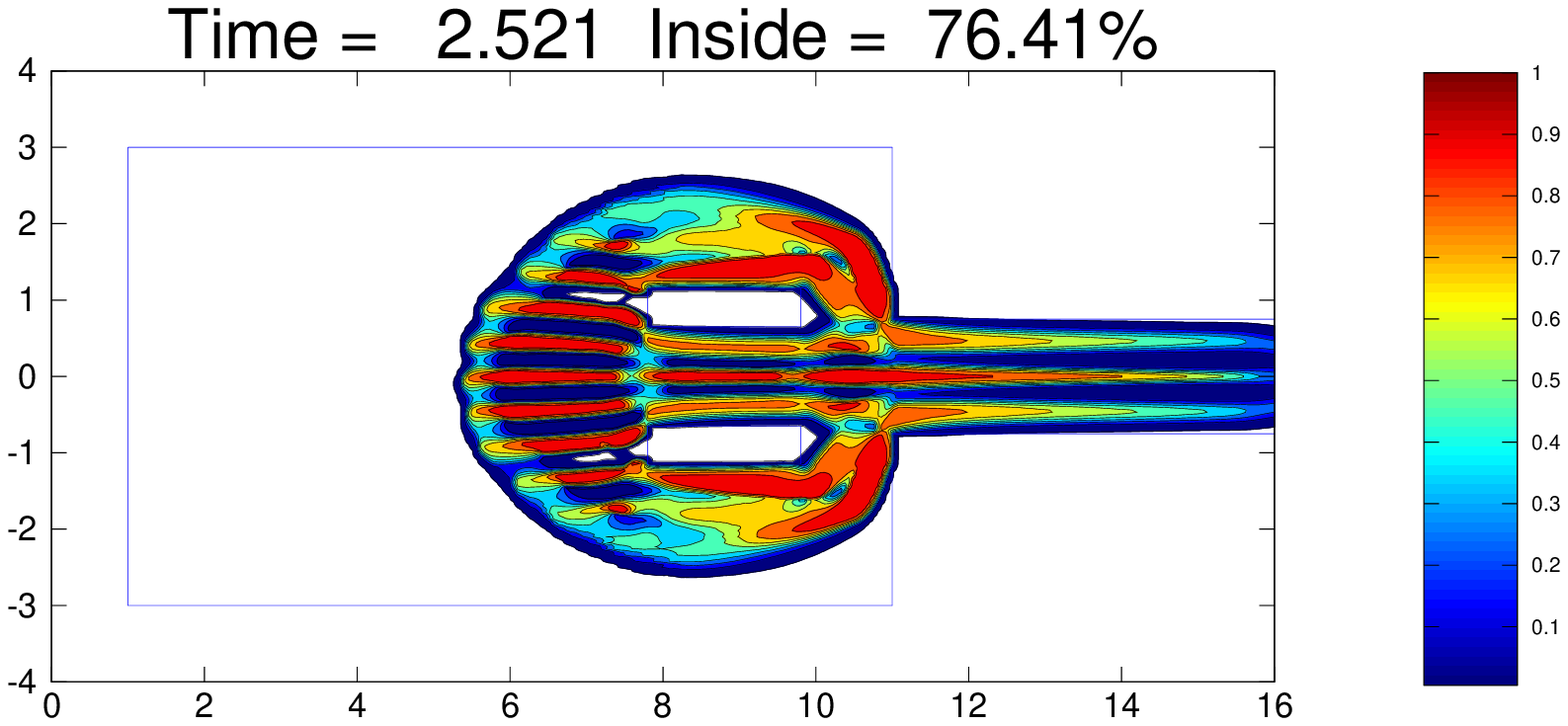}%
  \includegraphics[width=0.32\textwidth%, trim=75 110 10 120
]{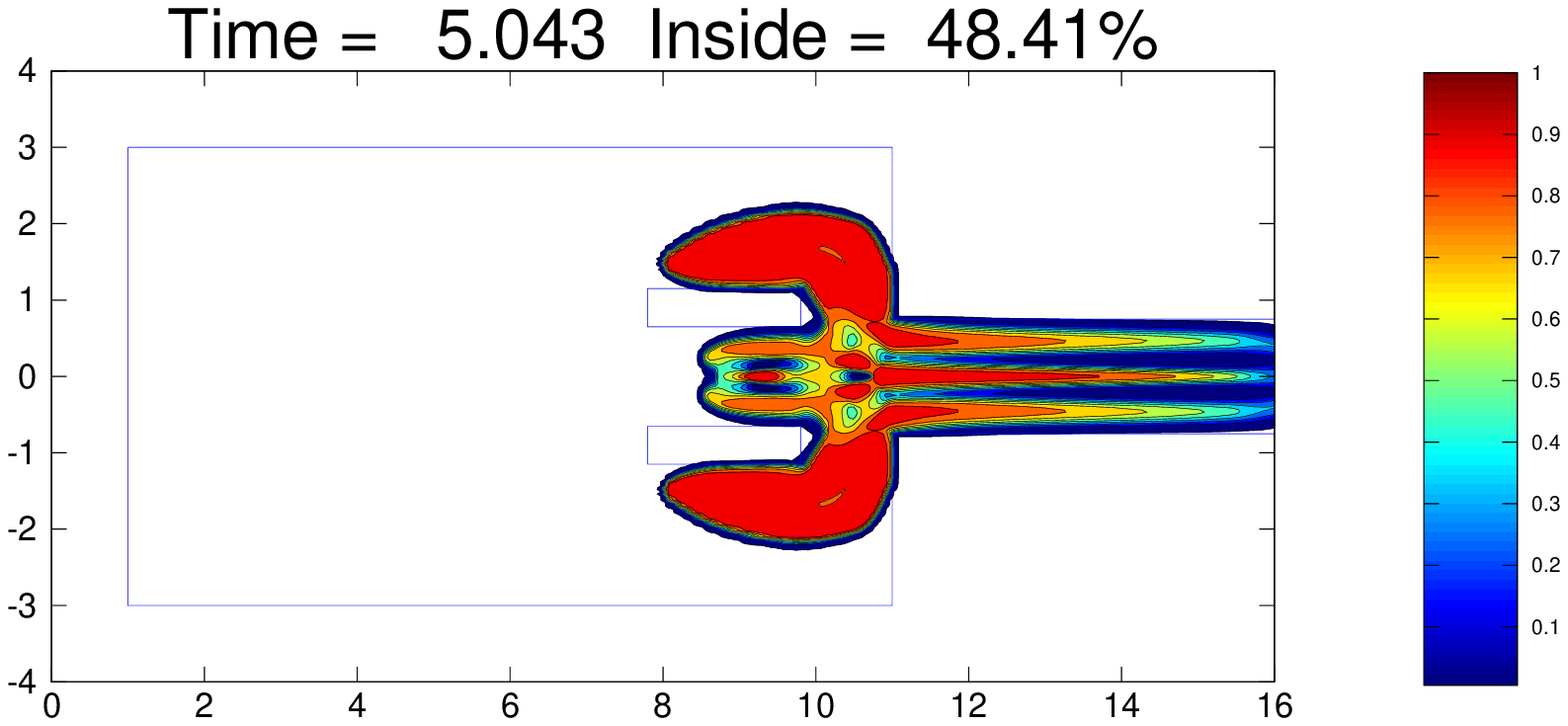}%
  \includegraphics[width=0.32\textwidth%, trim=75 110 10 120
]{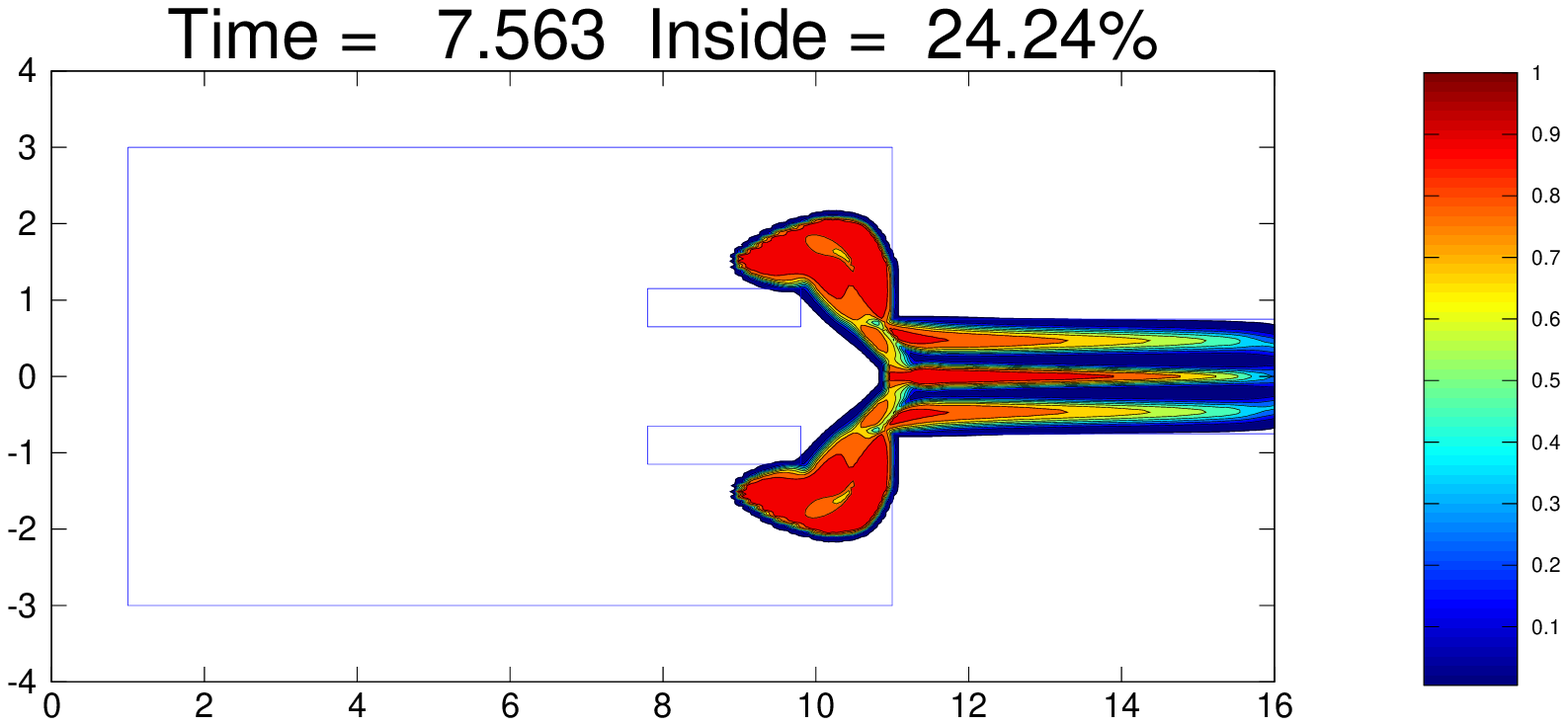}\\
  \includegraphics[width=0.32\textwidth%, trim=75 120 10 110
]{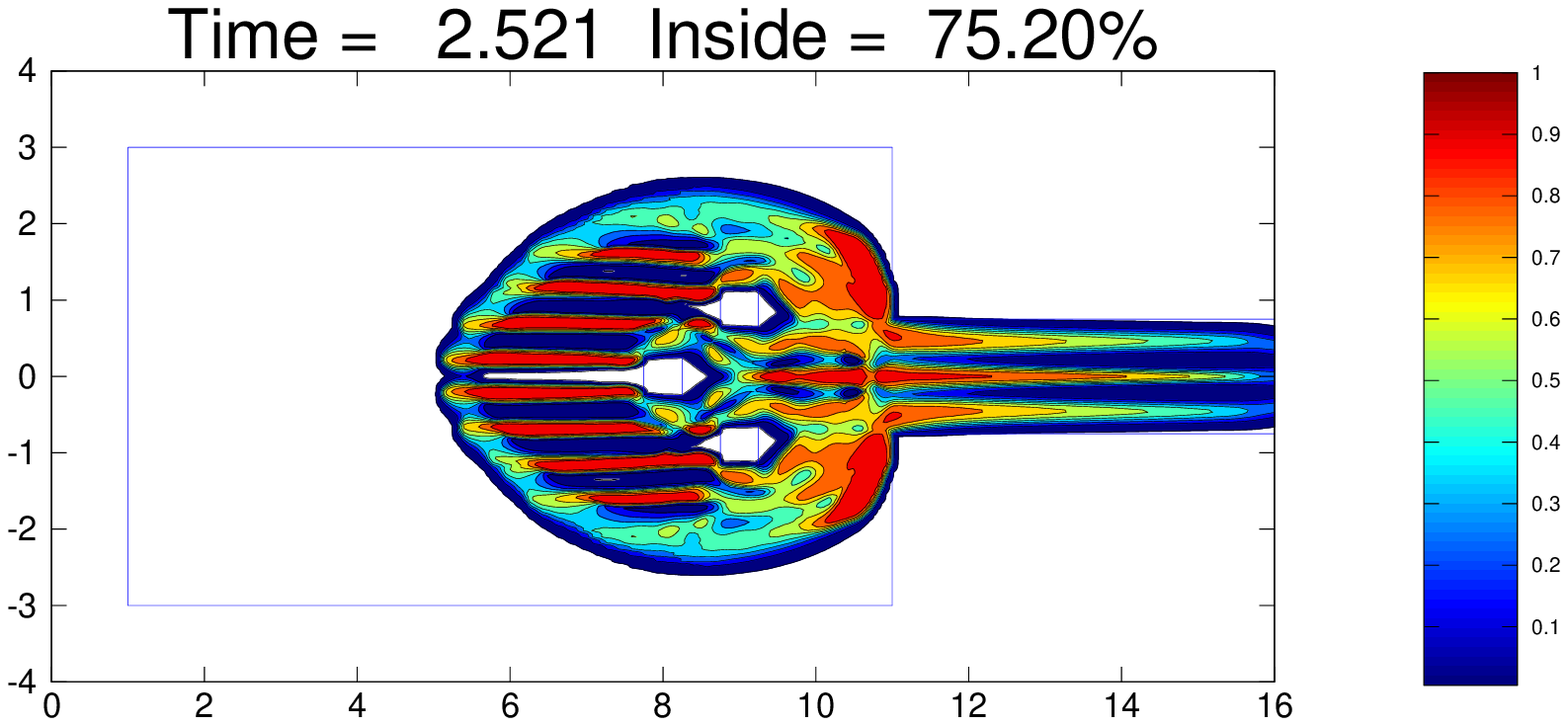}%
  \includegraphics[width=0.32\textwidth%, trim=75 120 10 110
]{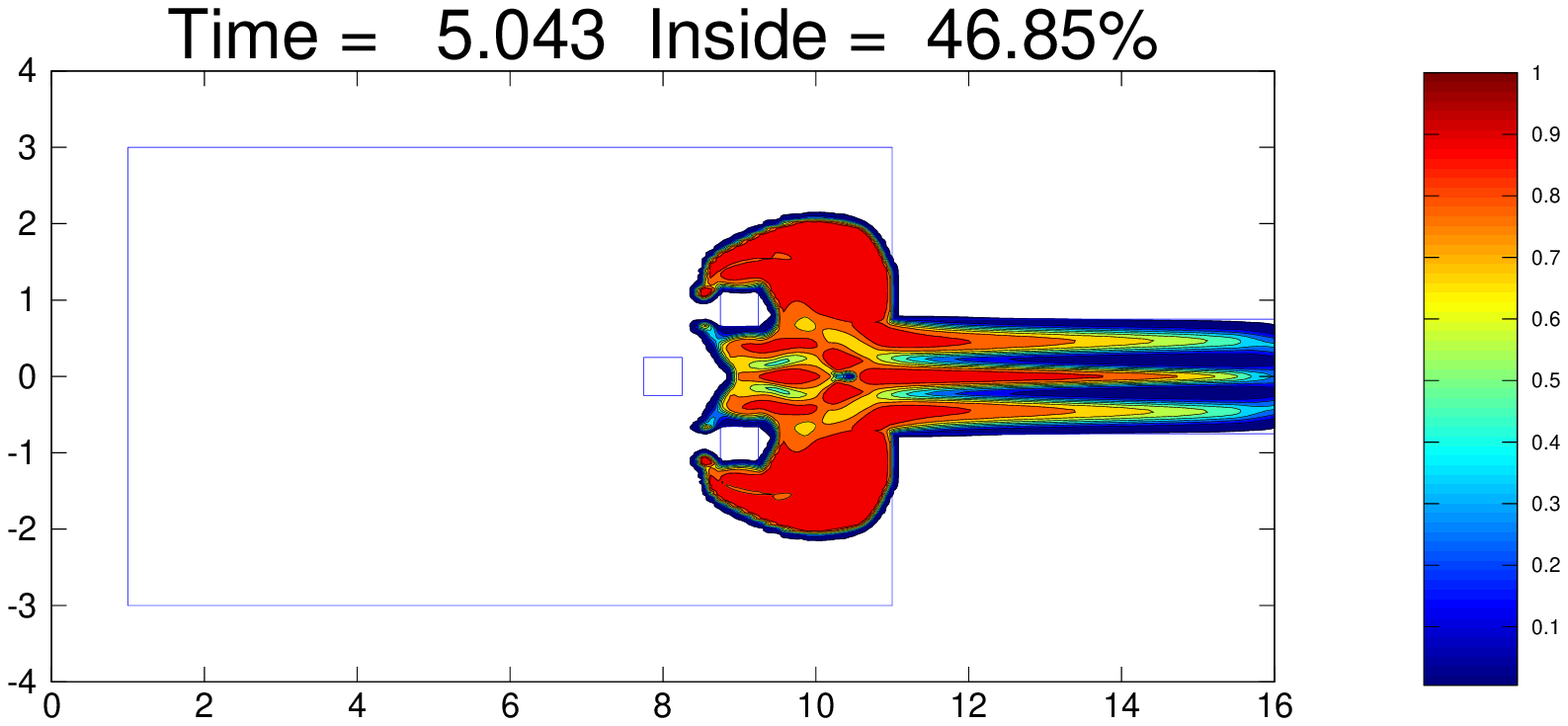}%
  \includegraphics[width=0.32\textwidth%, trim=75 120 10 110
]{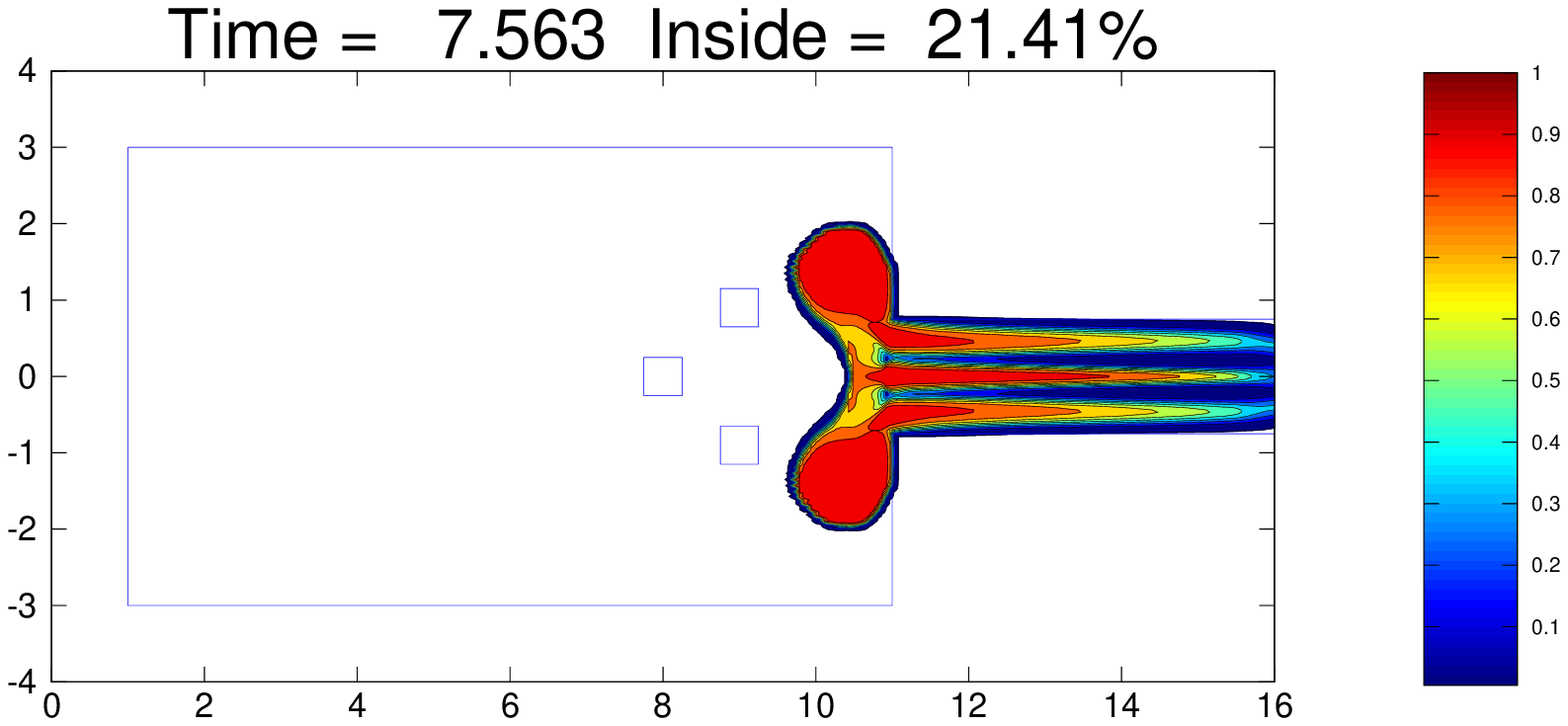}%
  \caption{Solution
 to~\eqref{eq:General}--\eqref{eq:IGood}--\eqref{eq:nu}--\eqref{eq:Evacuation}
    with different geometries, computed at time $t = 2.521$, $5.043$
    and $7.563$.}
  \label{fig:Evacuation1}
\end{figure}
The number of lanes self adapts to the available space, with three
lanes merging into one before the bottleneck. On the second line of
Figure~\ref{fig:Evacuation1}, the insertion of three columns in these
positions delays but does not avoid the congestion at the exit. Those
individuals that pass through the bottleneck are favored in exiting
the room.

In Figure~\ref{fig:Evacuation2}, first line, the insertion of four columns is more successful. Note, in the first two diagrams, that the number of lanes changes from $7$ before the bottleneck, to $4$ in between it. The individuals that do not pass through the bottleneck are penalized by the high density they meet near the door jambs.
\begin{figure}[htpb]
  \centering
  \includegraphics[width=0.32\textwidth%, trim=75 110 10 120
]{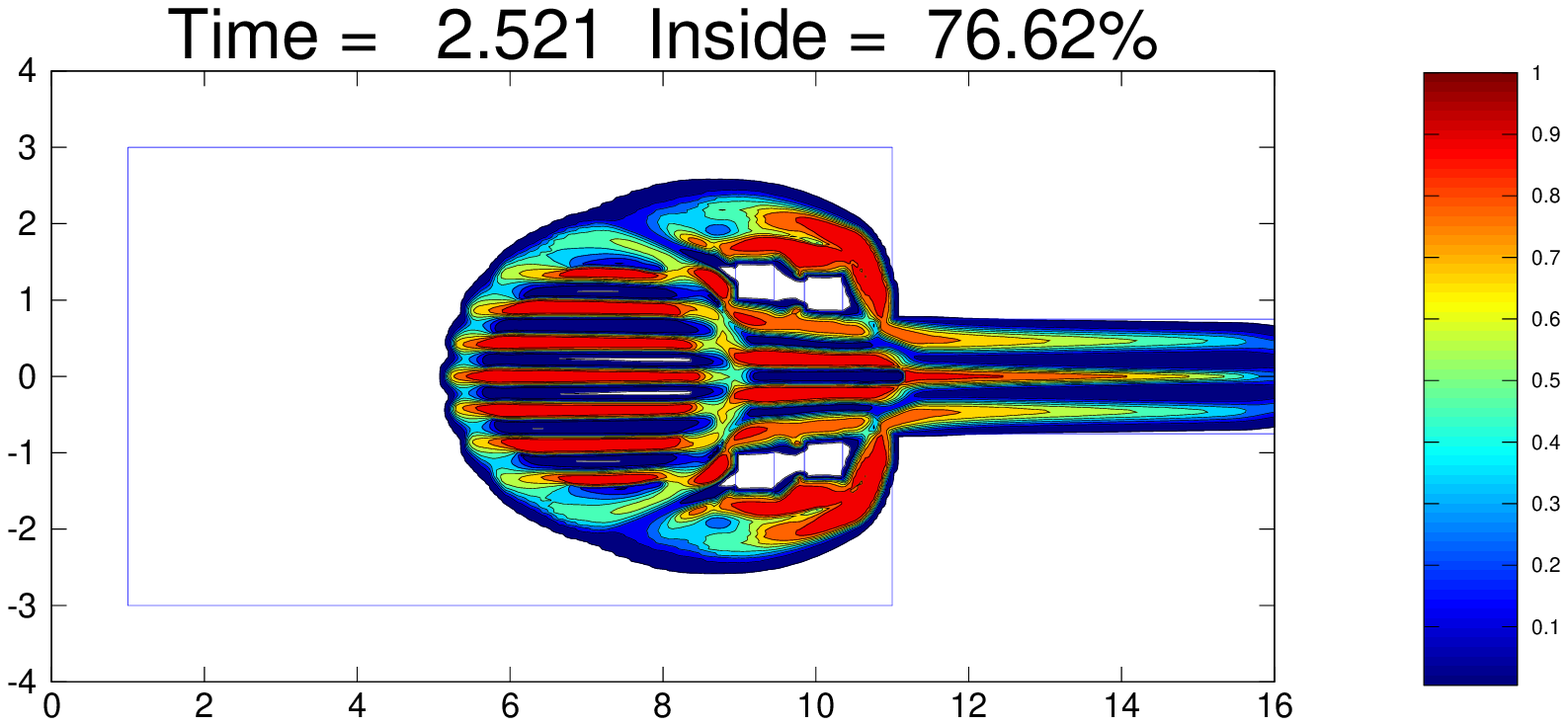}%
  \includegraphics[width=0.32\textwidth%, trim=75 110 10 120
]{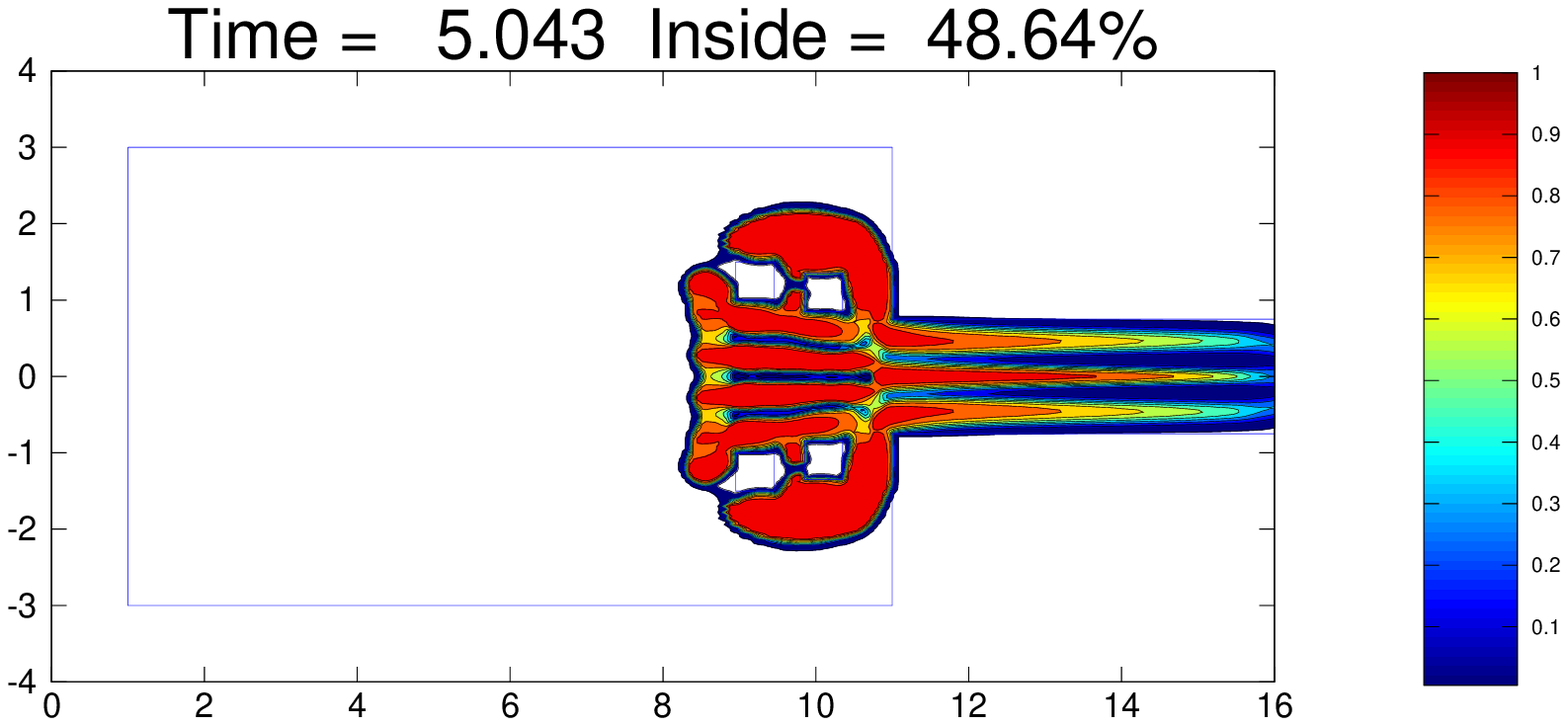}%
  \includegraphics[width=0.32\textwidth%, trim=75 110 10 120
]{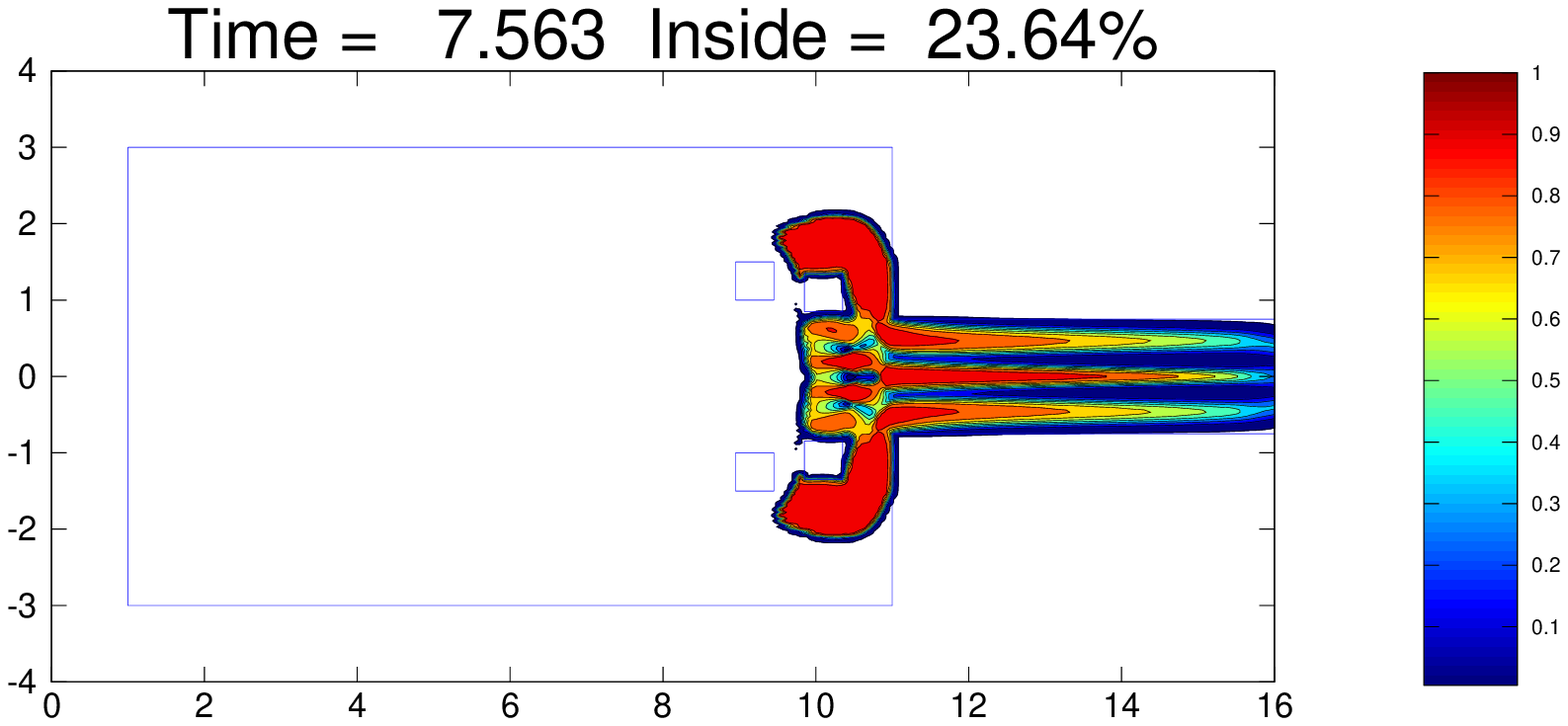}\\
  \includegraphics[width=0.32\textwidth%, trim=75 120 10 110
]{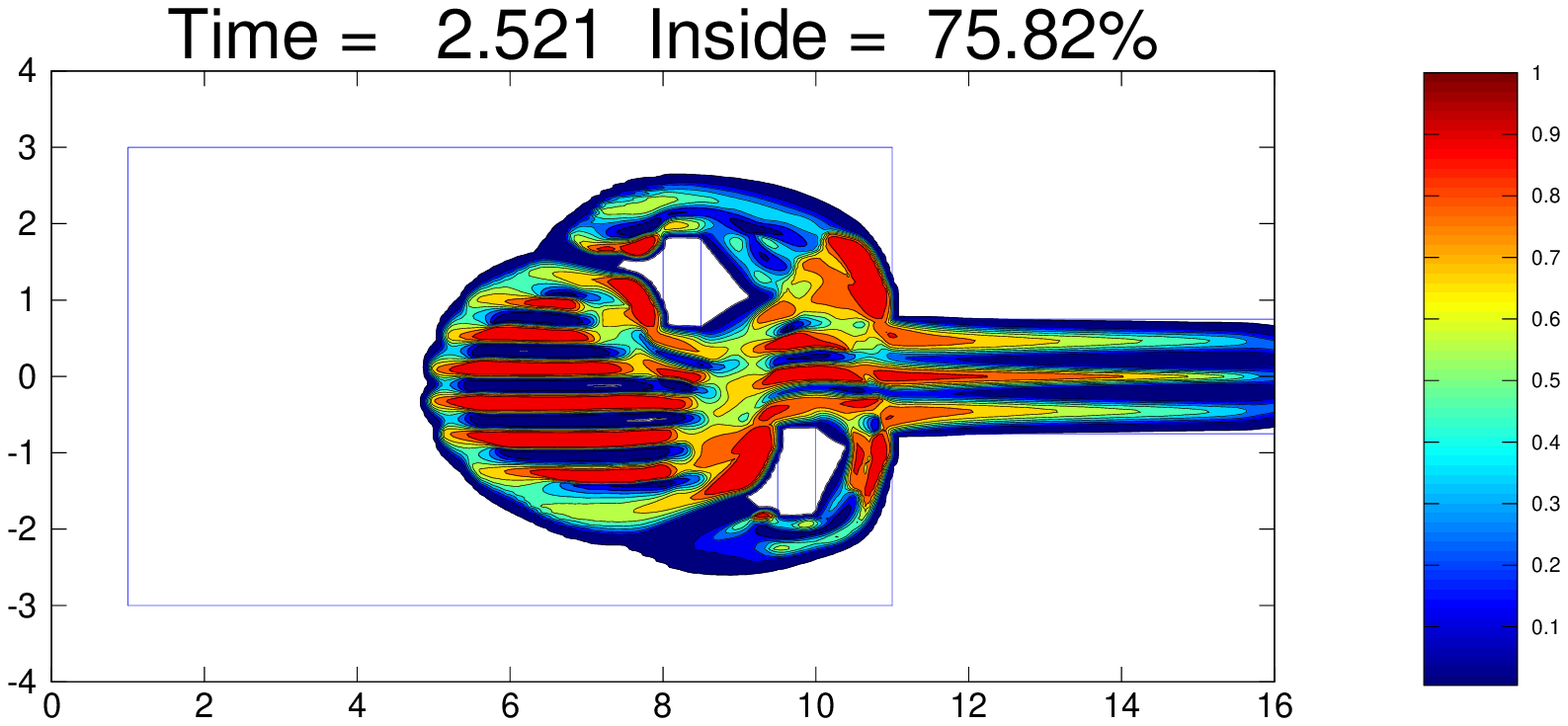}%
  \includegraphics[width=0.32\textwidth%, trim=75 120 10 110
]{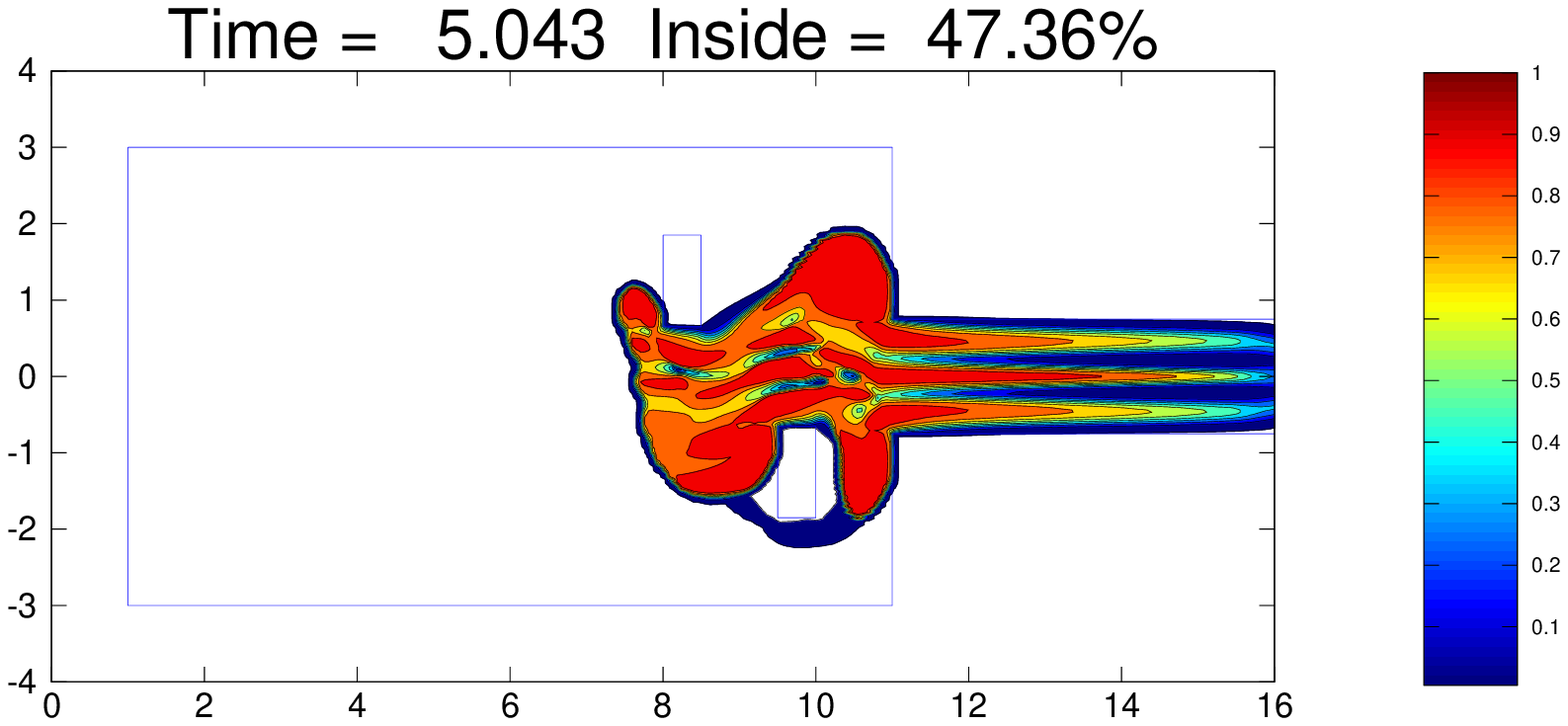}%
  \includegraphics[width=0.32\textwidth%, trim=75 120 10 110
]{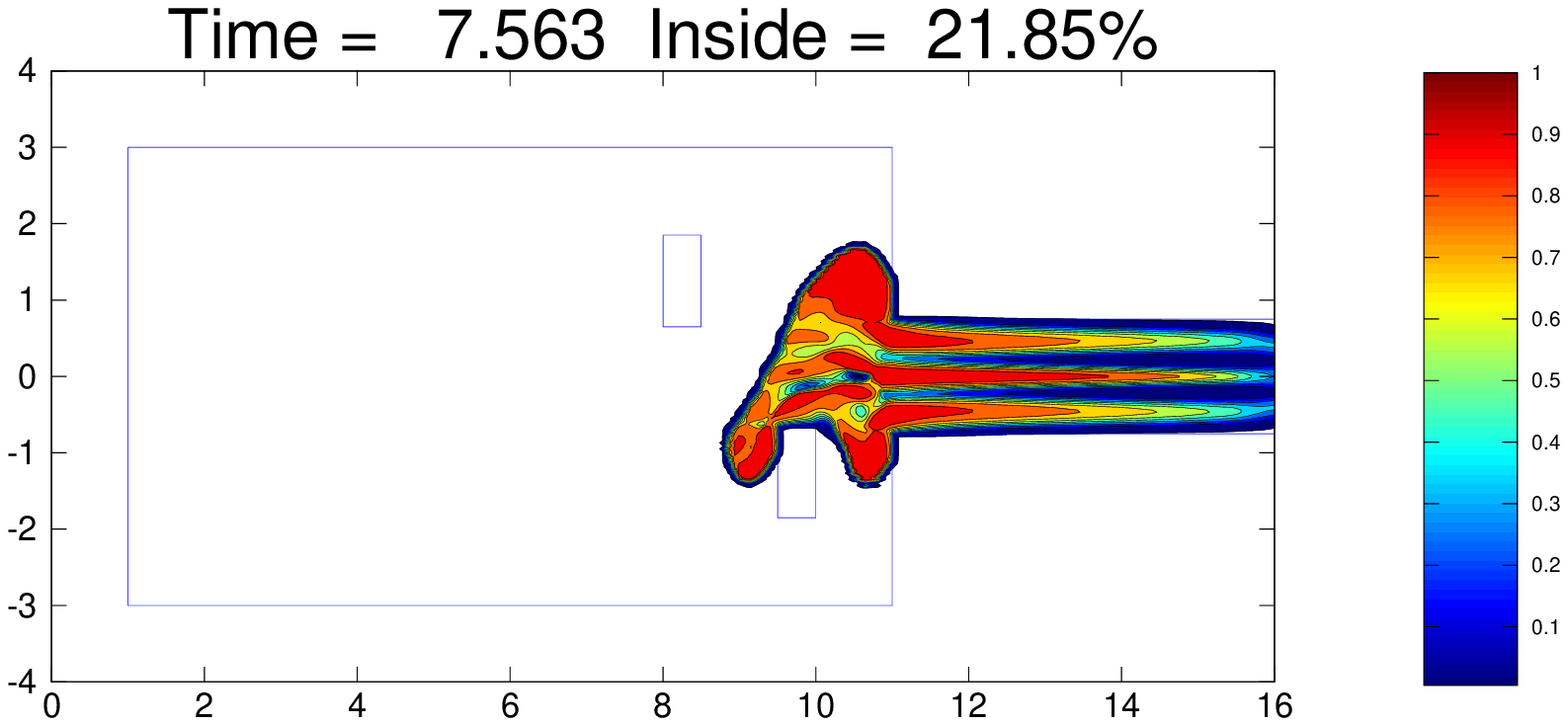}%
  \caption{Solution
 to~\eqref{eq:General}--\eqref{eq:IGood}--\eqref{eq:nu}--\eqref{eq:Evacuation}
    with different geometries, computed at time $t = 2.521$, $5.043$
    and $7.563$.}
  \label{fig:Evacuation2}
\end{figure}
Finally, on the second line in Figure~\ref{fig:Evacuation2}, an asymmetric layout hinders the lanes' pattern.

\subsection{On the Rise of Singularities}
\label{subs:Rise}

A typical feature of conservation laws is the possible rise of
singularities, see for
instance~\cite[Example~1.4]{BressanLectureNotes}. The nonlocal
equation~\eqref{eq:General} shares this characteristic. Indeed, assume
$\rho = \rho (t,x)$ is a given solution to~\eqref{eq:General}, smooth
up to time $T>0$. Then, setting $w (t,x) = \left(\nu (x) +
  \left(\mathcal{I} (t) \right) (x) \right)$, simple computations lead
to the following equation for the space derivative $\rho_j$ of $\rho$
in the direction $x_j$, for $j=1, \ldots, N$:
\begin{displaymath}
  \partial_t \rho_j
  +
  (\div \rho_j) \, q' \, w
  =
  (\rho_j)^2 \, q''\, w
  +
  \rho_j \, q'' \, \sum_{i\neq j} (\partial_i \rho) \, w
  +
  \rho_j \, q' \, \div w
  +
  q' \, \sum (\partial_i \rho) \, \partial_j w
  +
  q \, \partial_j \div w \,.
\end{displaymath}
The first term in the right hand side is quadratic in $\rho_j$,
showing that a blow up of $\rho_j$ may take place in finite time.

\begin{figure}[htpb]
  \centering
  \includegraphics[width=75mm]{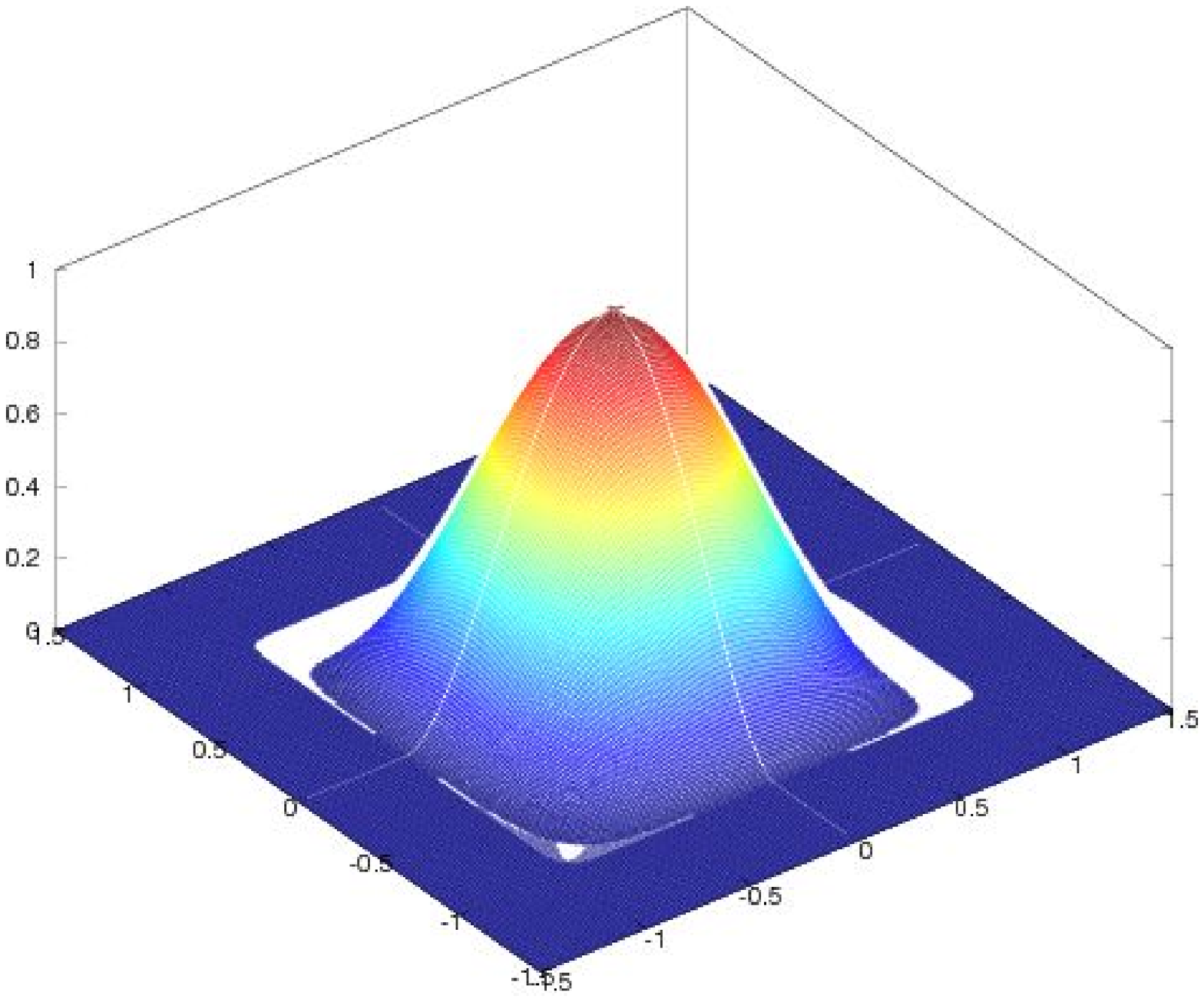}\quad
  \includegraphics[width=75mm]{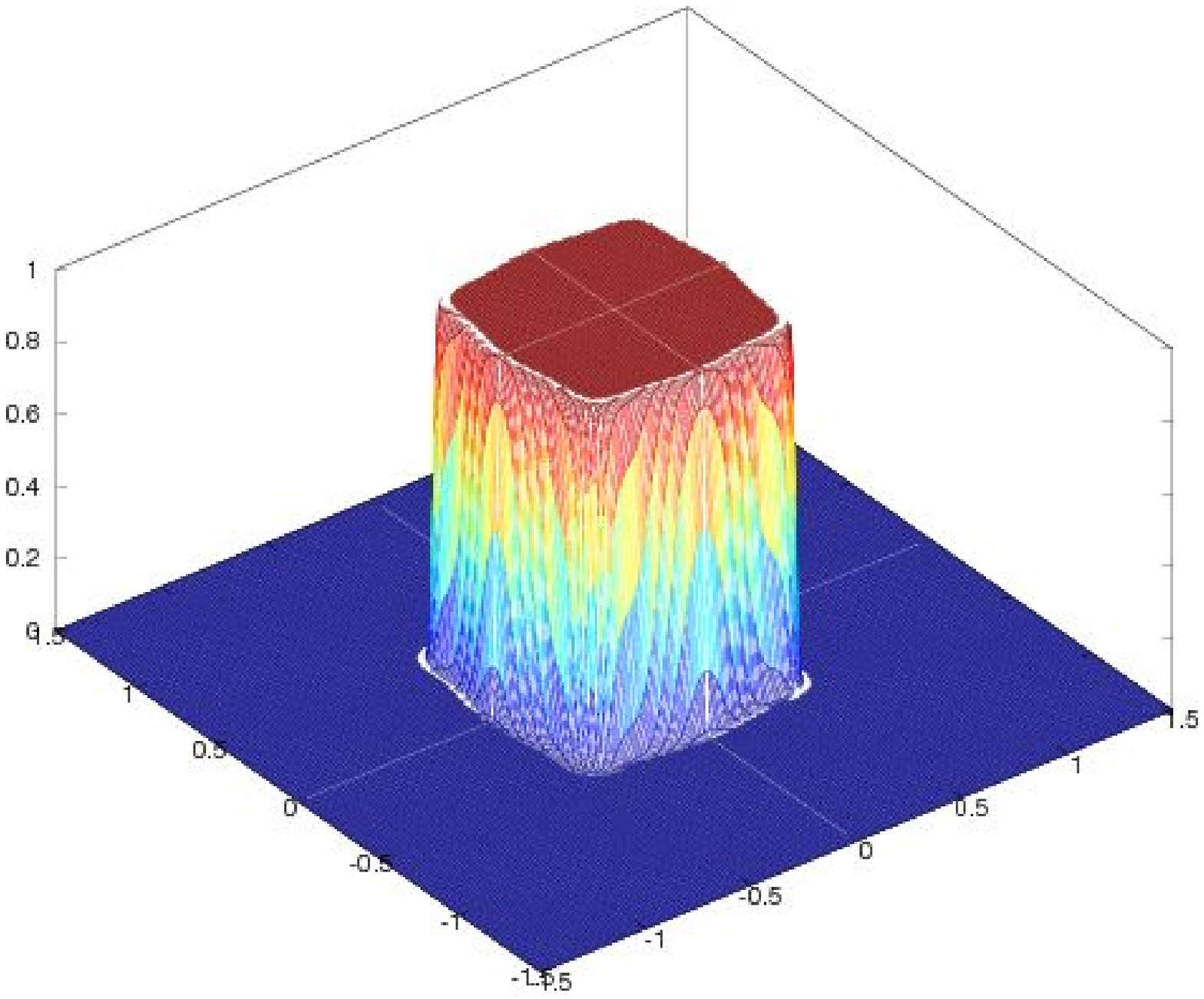}\vspace{-\baselineskip}
  \caption{Left, the initial datum and, right, the solution
    to~\eqref{eq:General}--\eqref{eq:IGood}--\eqref{eq:singo} at time
    $t=1$. Note the formation of vertical faces in the initially
    smooth distribution.}
  \label{fig:singo}
\end{figure}

Moreover, we consider~\eqref{eq:General} with $\mathcal{I}$ as
in~\eqref{eq:IGood} and
\begin{equation}
  \label{eq:singo}
  \begin{array}{rcl@{\quad}rcl}
    v (\rho)
    & = &
    1-\rho\,,
    &
    \eta (x,y)
    & = &
    (1-16x^2)^3 \, (1-16y^2)^3 \, \caratt{[-1/4,1/4]^2} (x,y)\,,
    \\
    \nu (x)
    & = &
    0\,,
    &
    \rho_0 (x,y)
    & = &
    (1-4x^2/9)^2 \, (1-4y^2/9)^2 \, \caratt{[-3/2,3/2]^2} (x,y)\,,
    % \\
    % \epsilon
    % & = &
    % -1
  \end{array}
  \quad
  \epsilon=-1
\end{equation}
and we obtain the solution in Figure~\ref{fig:singo}.

\section{Technical Details}
\label{sec:TD}

\subsection{Existence and uniqueness}
\begin{proofof}{Lemma~\ref{lem:Kruzkov}}
  Let $q (\rho) = \rho \, v (\rho)$.  Thanks
  to~\textbf{($\boldsymbol{\nu}$)}, the assumptions on $r$
  and~\textbf{(I.1)}, we have for all $M>0$
  \begin{displaymath}
    \begin{array}{r@{\;}c@{\;}lr@{\;}c@{\;}l}
      \partial_\rho f (t,x,\rho)
      & = &
      q'(\rho) \,
      \left(\nu (x) + \mathcal{I}\left(r (t)\right) (x)\right);
      &
      \partial_\rho f
      & \in &
      \L\infty([0,T]\times\reali^N\times [-M,M]; \reali^N)\,;
      \\
      \div f(t,x,\rho)
      & = &
      q(\rho) \,
      \div \left(\nu (x) + \mathcal{I}\left(r (t)\right) (x)\right);
      &
      \div f
      & \in &
      \L\infty([0,T]\times\reali^N\times [-M,M]; \reali)\,.
    \end{array}
  \end{displaymath}
  Thus, we can apply Kru\v zkov Theorem~\cite[Theorem~5 \&
  §5.4]{Kruzkov} and ensure that~\eqref{eq:K} admits a unique Kru\v
  zkov solution $\rho \in \L\infty \left( \rpic; \L1(\reali^N; \reali)
  \right)$, which is continuous from the right in time.
\end{proofof}

\begin{lemma}
  \label{lem:loc}
  Let $\rho_0 \in \L1(\reali^N; [0,R])$, $r \in
  \C0\left(\rpic; \L1(\reali^N; [0,R])\right)$. Under hypotheses
  \textbf{(v)}--\textbf{($\boldsymbol{\nu}$)}--\textbf{(I.1)}, the
  Cauchy problem~\eqref{eq:K}
  % \begin{equation}
  %   \label{eq:loc}
  %   \left\{
  %     \begin{array}{l}
  %       \partial_t\rho
  %       +
  %       \div
  %       \left(
  %         \rho \, v(\rho) \, w\left(x,\left( I(r) \right)(x)\right)
  %       \right)
  %       =
  %       0
  %       \,,
  %       \\
  %       \rho(0,x)=\rho_0(x)\,,
  %     \end{array}
  %   \right.
  % \end{equation}
  admits a unique weak entropy solution $\rho$, with $\rho \in \C0
  \left( \rpic; \L1(\reali^N; [0,R]) \right)$ satisfying, for all $t\in
  \rpic$,
  \begin{equation}
    \label{est:L1}
    \norma{\rho(t)}_{\L1} = \norma{\rho_0}_{\L1}\,.
  \end{equation}
  Assume, in addition, that~\textbf{(I.2)} is satisfied. Then, $\rho_0
  \in \BV(\reali^N; [0,R])$ implies $\rho(t) \in \BV(\reali^N;[0,R])$
  for all time $t\geq 0$. Moreover, the following bound is satisfied
  \begin{equation}
    \label{est:bv}
    \tv\left(\rho(t)\right)
    \leq
    \left[
      \tv(\rho_0)
      +
      t \, N W_N\norma{q}_{\L\infty([0,R])}
      \left(
        \norma{\nabla\div \nu}_{\L1} + C_I(\norma{r}_{\L\infty([0,T];\L1)})
      \right)
    \right] e^{\kappa_0^* t},
  \end{equation}
  where $W_N=\int_0^{\pi/2}(\cos \theta)^N\d{\theta}$ and the constant
  $\kappa_0^*$ is bounded above as follows:
  \begin{displaymath}
    \kappa_0^*
    \leq
    (2N+1)
    \norma{q'}_{\L\infty([0,R])}
    \left(
      \norma{\nabla \nu}_{\L\infty}
      +
      C_I(\norma{r}_{\L\infty([0,T];\L1)})
    \right)\,.
  \end{displaymath}
\end{lemma}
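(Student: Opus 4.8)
The plan is to construct the solution through Kru\v zkov's theorem and then establish in turn the three conclusions: the two-sided bound $0 \le \rho \le R$, the conservation of mass~\eqref{est:L1}, and the total variation estimate~\eqref{est:bv}. By Lemma~\ref{lem:Kruzkov}, the flux $f(t,x,\rho) = q(\rho)\,w(t,x)$ with $w = \nu + \mathcal{I}(r(t))$ satisfies the hypotheses of Kru\v zkov's theorem, which yields a unique Kru\v zkov solution $\rho \in \L\infty(\rpic; \L1(\reali^N;\reali))$, continuous from the right in time. To obtain the bound $\rho(t) \in [0,R]$ I would exploit that $q(0)=0$ and $q(R) = R\,v(R) = 0$ by~\textbf{(v)}, so that the constants $\rho \equiv 0$ and $\rho \equiv R$ are themselves entropy solutions of~\eqref{eq:K}; Kru\v zkov's comparison principle then propagates $0 \le \rho_0 \le R$ to $0 \le \rho(t) \le R$ for a.e.\ $t$.

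With this $\L\infty$ control and $w \in \W1\infty$, the flux $q(\rho)\,w$ lies in $\L1$ and decays at infinity, so integrating the conservation law over $\reali^N$ gives $\int \rho(t) = \int \rho_0$; since $\rho \ge 0$, this is exactly~\eqref{est:L1}. Upgrading the right-continuity in time to $\rho \in \C0(\rpic;\L1)$ is then standard, using the $\L1$ Lipschitz-in-time estimate for Kru\v zkov solutions together with the uniform flux bounds.

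The core of the lemma is the $\BV$ bound~\eqref{est:bv}. A purely formal differentiation of~\eqref{eq:K} in the direction $x_j$ produces the transport equation for $\rho_j = \pt_j\rho$ displayed in Section~\ref{subs:Rise}, and inspecting it already reveals the mechanism behind~\eqref{est:bv}: the only inhomogeneous term is $q\,\pt_j\div w$, whose spatial $\L1$ norm is bounded, after summation over $j$, by $\norma{q}_{\L\infty}\,\norma{\nabla\div w}_{\L1} \le \norma{q}_{\L\infty}\bigl(\norma{\nabla\div\nu}_{\L1}+\norma{\nabla\div\mathcal{I}(r)}_{\L1}\bigr)$, while the terms carrying one derivative of $w$ drive an exponential amplification at rate $(2N+1)\,\norma{q'}_{\L\infty}\,\norma{\nabla w}_{\L\infty}$. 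Here the hypotheses enter exactly as required: \textbf{($\boldsymbol{\nu}$)} gives $\nabla\div\nu\in\L1$, and \textbf{(I.2)} gives $\norma{\nabla\div\mathcal{I}(r)}_{\L1}\le C_I(\norma{r}_{\L1})$, yielding the coefficient of $t$ in~\eqref{est:bv}, whereas $\norma{\nabla w}_{\L\infty}\le \norma{\nabla\nu}_{\L\infty}+C_I(\norma{r}_{\L1})$ from~\textbf{(I.1)} yields the exponent $\kappa_0^*$.

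The delicate point is that this differentiation is only formal: the quadratic term $(\rho_j)^2\,q''$ is precisely the shock-forming mechanism discussed in Section~\ref{subs:Rise}, so a classical solution need not persist and a Gronwall argument cannot be run on $\rho_j$ directly. I would therefore establish~\eqref{est:bv} within the Kru\v zkov framework, invoking the general total variation estimate for multidimensional balance laws of~\cite{ColomboMercierRosini}: for entropy solutions the total variation is diminished across shocks and is produced only by the explicit $(t,x)$-dependence of the flux, so the quantities identified above survive as a rigorous bound. Concretely I would verify that $f(t,x,\rho)=q(\rho)\,w(t,x)$ meets the hypotheses of that estimate, the decisive one being $\nabla\div w\in\L1$ uniformly in $t$, which is the content of~\textbf{(I.2)} together with~\textbf{($\boldsymbol{\nu}$)}, and track the geometric constants $NW_N$ and $(2N+1)$, which arise there from expressing the total variation as an average over $S^{N-1}$ of one-dimensional variations. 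Gronwall's inequality then delivers~\eqref{est:bv} with the stated $\kappa_0^*$. A final approximation of $\rho_0$ in $\L1\cap\BV$ and of the data in the relevant norms, combined with the $\L1$-stability of Kru\v zkov solutions, removes any smoothness assumption used along the way.
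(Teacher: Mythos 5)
Your proposal is correct and follows essentially the same route as the paper: existence and right-continuity via Kru\v zkov's theorem (Lemma~\ref{lem:Kruzkov}), the $[0,R]$ bound by comparison with the constant solutions $\rho\equiv 0$ and $\rho\equiv R$ (using $q(0)=q(R)=0$), mass conservation from the conservation form, and the $\BV$ bound by verifying the hypotheses of the general total variation estimate for Kru\v zkov solutions with $(t,x)$-dependent flux from~\cite{ColomboMercierRosini, MercierStability}, with \textbf{(I.1)} controlling $\norma{\nabla w}_{\L\infty}$ and \textbf{(I.2)} together with \textbf{($\boldsymbol{\nu}$)} controlling $\norma{\nabla\div w}_{\L1}$. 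The paper invokes the same ingredients (comparison theorem, \cite[Theorem~2.2]{MercierStability} for the $\tv$ bound, \cite[Remark~2.4]{ColomboMercierRosini} for time continuity), so the two arguments coincide in substance.
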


\begin{proof}
  Below we denote $q(\rho) = \rho \, v (\rho) $, so that $f(t,x,\rho)
  = q(\rho)\, \left(\nu (x) + \left(\mathcal{I} \left(r (t) \right)
    \right) (x) \right)$. The existence of a solution follows from
  Lemma~\ref{lem:Kruzkov}.  The rest of the proof is obtained through
  the following steps.

  \paragraph{Estimates in $\L1$ and $\L\infty$.}
  Since $\rho$ is a solution to a conservation law, (\ref{est:L1}) is
  immediately satisfied.

  Besides, $\rho\equiv 0$ and $\rho\equiv R$ are solutions
  to~(\ref{eq:K}) associated respectively to the constant initial
  conditions $\rho_0 \equiv 0$ and $\rho_0 \equiv R$. Hence, we can
  apply the comparison Theorem~\cite[Theorem~3]{Kruzkov} and obtain
  that if the initial condition takes value in $[0,R]$, then the
  solution takes values in $[0,R]$, so that $\rho \in \L\infty \left(
    \rpic; \L1(\reali^N; [0,R]) \right)$.

  \paragraph{Continuity in time.}
  Thanks to~\cite[Remark~2.4]{ColomboMercierRosini}
  or~\cite[Corollary~2.4]{MercierStability}, $\rho$ is continuous in
  time if, for any $T>0$,
  \begin{equation}
    \label{eq:cond}
    \norma{q}_{\L\infty ([0,R])}
    \int_0^T\int_{\reali^N}
    \modulo{\div \left(\nu (x) + \left(\mathcal{I} \left(r (t) \right)
        \right) (x) \right)} \d{x}\d{t}
    <
    \infty\,.
  \end{equation}
  We have, thanks to~\textbf{(I.1)}, for all $t\geq 0$
  \begin{displaymath}
    \norma{\div \left(\nu + \mathcal{I} \left(r (t) \right) \right)}_{\L1}
    \leq
    \norma{\div \nu}_{\L1}
    +
    C_I(\norma{r (t)}_{\L1})\,.
  \end{displaymath}
  Hence, condition~(\ref{eq:cond}) is satisfied under
  hypotheses~\textbf{(v)}--\textbf{($\boldsymbol{\nu}$)}--\textbf{(I.1)}
  and we also obtain $\rho\in \C0(\rpic; \L1(\reali^N; [0,R]))$.

  \paragraph{Estimate in $\BV$.}
  To prove the bound on the $\tv$ norm we
  use~\cite[Theorem~2.2]{MercierStability}. To this aim, we have to
  check that for any $T>0$,
  \begin{displaymath}
    \nabla\partial_\rho f \in \L\infty([0,T]\times \reali^N\times [0,R];
    \reali^{N\times N})
    \quad \mbox{ and } \quad
    \int_0^T \!\!\! \int_{\reali^N}
    \norma{\nabla \div f (t,x, \cdot)}_{\L\infty([0,R])} \d{x} \d{t} < \infty\,.
  \end{displaymath}
  Note first that $\nabla\partial_\rho f (t,x,\rho) = q'(\rho) \, \nabla
  \left(\nu + \mathcal{I} \left(r (t) \right) \right)$. Hence, thanks
  to \textbf{(v)}--\textbf{($\boldsymbol{\nu}$)} and~\textbf{(I.1)},
  we have
  \begin{displaymath}
    \norma{\nabla\partial_\rho f (t,x,\rho)}_{\L\infty}
    \leq
    \norma{q'}_{\L\infty([0,R])}
    \left(
      \norma{\nabla \nu}_{\L\infty}
      +
      C_I(\norma{r (t)}_{\L1})
    \right)\,,
  \end{displaymath}
  Thanks to \textbf{(v)}--\textbf{($\boldsymbol{\nu}$)}
  and~\textbf{(I.2)}, we have, for all $t\geq 0$
  \begin{displaymath}
    \norma{\nabla\div
      \left(\nu + \mathcal{I} \left(r (t) \right) \right)}_{\L1}
    \leq
    \norma{\nabla\div \nu}_{\L1}
    +
    C_I(\norma{r (t)}_{\L1}) \,.
  \end{displaymath}
  Applying~\cite[Theorem~2.2]{MercierStability}, we obtain
  \begin{eqnarray*}
    \kappa_0^*
    & = &
    (2N+1)\norma{\nabla \partial_\rho f}_{\L\infty}
    \\
    & \leq &
    (2N+1) \, \norma{q'}_{\L\infty([0,R])}
    \left(
      \norma{\nabla \nu}_{\L\infty}
      +
      C_I(\norma{r}_{\L\infty([0,T];\L1)})
    \right)\,,
    \\
    \tv(\rho(t))
    & \leq &
    \tv(\rho_0) \, e^{\kappa_0^* t}
    \\
    & &
    +
    t e^{\kappa_0^* t} N W_N \norma{q}_{\L\infty([0,R])}
    \left(
      \norma{\nabla\div \nu}_{\L1}
      +
      C_I(\norma{r}_{\L\infty([0,T];\L1)})
    \right) \,,
  \end{eqnarray*}
  completing the proof.
\end{proof}

\begin{proofof}{Theorem~\ref{thm:existence}}
  Thanks to Lemma~\ref{lem:loc}, under
  hypotheses~\textbf{(v)}--\textbf{($\boldsymbol{\nu}$)}
  and~\textbf{(I)}, for any $r \in \C0 \left([0,T]; \L1
    (\reali^N;[0,R])\right)$ there exists a unique solution $\rho \in
  \C0 \left(\rpic; \L1(\reali^N; [0,R]) \right)$ to~(\ref{eq:K})
  satisfying~(\ref{est:L1})--(\ref{est:bv}).  For any time $T > 0$,
  introduce the space
  \begin{displaymath}
    X(T) =
    \left\{
      r\in \C0 \left([0,T]; \L1(\reali^N; [0,R]) \right) \colon
      \forall\, t\in [0,T],
      \norma{r(t)}_{\L1} \leq \norma{\rho_0}_{\L1}
    \right\} \,.
  \end{displaymath}
  Note that $X (T)$ is a Banach space with respect to the norm
  $\norma{r}_{X (T)} = \sup_{t \in [0,T]} \norma{r(t)}_{\L1}$. Let
  $\mathcal{Q}$ be the map that associates to $r\in X(T)$ the solution
  $\rho \in X(T)$ to~(\ref{eq:K}). We want to find a condition on $T$
  so that $\mathcal{Q}$ is a contraction.

  Let $r_1, r_2 \in X(T)$. To
  apply~\cite[Theorem~2.6]{MercierStability}, we have to check that,
  for all $t \in [0,T]$,
  \begin{eqnarray*}
    & &
    \partial_\rho (f_1-f_2)
    \in
    \L\infty([0,T]\times \reali^N\times [0,R]; \reali^N)
    \,, \mbox{ and}
    \\
    & &
    \int_0^T \!\! \int_{\reali^N}
    \norma{\div (f_1-f_2) (t,x,\cdot)}_{\L\infty([0,R])} \d{x} \d{t}
    <
    +\infty\,.
  \end{eqnarray*}
  Thanks to~\textbf{(I.3)} we have
  \begin{eqnarray*}
    & &
    \norma{\partial_\rho (f_1-f_2)}_{\L\infty}
    =
    \norma{\left(\mathcal{I}(r_1) - \mathcal{I}(r_2)\right) q'}_{\L\infty}
    \leq
    K_I \, \norma{q'}_{\L\infty ([0,R])} \,
    \norma{r_1-r_2}_{\L\infty([0,T];\L1)}\, ,
    \\
    & &
    \int_0^T \!\! \int_{\reali^N}
    \norma{\div (f_1-f_2) (t,x,\cdot)}_{\L\infty([0,R])} \d{x} \d{t}
    \leq
    T K_I \,
    \norma{q}_{\L\infty([0,R])} \,
    \norma{r_1-r_2}_{\L\infty([0,T];\L1)}\,.
  \end{eqnarray*}
  Hence, we get for all $t\geq 0$
  \begin{eqnarray*}
    & &
    \norma{(\rho_1-\rho_2)(t)}_{\L1}
    \\
    &\leq &
    t \,K_I  \norma{q}_{\W1\infty([0,R])} \norma{r_1-r_2}_{\L\infty([0,T];\L1)}
    \\
    & &
    \times
    \big[1+ e^{\kappa_0^* t}
    \left(
      \tv(\rho_0)
      +
      \frac{N W_N}{2}\,t\, \norma{q}_{\L\infty([0,R])}
      \left(\norma{\nabla\div \nu}_{\L1}
        +
        C_I(\norma{r}_{\L\infty([0,T];\L1)})
      \right)
    \right)
    \big]\,.
  \end{eqnarray*}
  Denoting
  \begin{eqnarray}
    k
    & = &
    (2N+1) \norma{q'}_{\L\infty([0,R])}
    \left(
      \norma{\nabla \nu}_{\L\infty}
      +
      C_I(\norma{\rho_0}_{\L1})
    \right)\,, \label{eq:k}
    \\
    C_1
    & = &
    K_I \,  \norma{q}_{\W1\infty ([0,R])} \,, \nonumber
    \\
    C_2
    & = &
    \frac{N W_N}{2} \norma{q}_{\L\infty([0,R])}
    \left(
      \norma{\nabla\div \nu}_{\L1}
      +
      C_I(\norma{\rho_0}_{\L1})
    \right)\,.\nonumber
  \end{eqnarray}
  The above estimate can be written
  \begin{displaymath}
    \norma{(\rho_1-\rho_2)(t)}_{\L1}
    \leq
    C_1 \, t \, \norma{r_1-r_2}_{\L\infty([0,T];\L1)}
    \left(
      1
      +
      e^{k t} \left(\tv(\rho_0) + C_2t
      \right)
    \right)\,,
  \end{displaymath}
  We choose now $T$ so that
  \begin{displaymath}
    C_1 \, T \, \left( 1 + e^{k T} \left(\tv(\rho_0) + C_2\, T \right) \right)
    =
    \frac{1}{2}
  \end{displaymath}
  and, applying Banach Fixed Point Theorem, we obtain a unique fixed
  point for $\mathcal{Q}$ on $X(T)$.

  We now iterate the procedure above. To this aim, we use the total
  variation estimate~\eqref{est:bv}
  \begin{displaymath}
    \tv\left(\rho(t)\right)
    \leq
    \left(
      \tv(\rho_0)
      +
      2\, C_2 t
    \right) e^{k t},
  \end{displaymath}
  and, given $T_n$, we recursively define $T_{n+1}$ so that
  \begin{displaymath}
    C_1 \, (T_{n+1} - T_n)
    \left(
      1
      +
      e^{k T_{n+1}} \tv (\rho_0)
      +
      C_2 \, e^{k (T_{n+1}-T_n)} (2T_n e^{kT_n} +T_{n+1} - T_n)
    \right)
    =
    \frac{1}{2}
  \end{displaymath}
  and the above procedure ensures the existence of a fixed point on
  the interval $[T_n,T_{n+1}]$.

  The sequence $T_n$ is strictly increasing. If it is bounded, then
  the latter relation above yields $0=1/2$. Hence, $T_n\to \infty$
  when $n\to \infty$, completing the proof.
\end{proofof}

\subsection{Stability}

\begin{proofof}{Theorem~\ref{thm:stability}}
  Here, we apply~\cite[Corollary 2.8]{MercierStability} to compare the
  solutions $\rho_1$ and $\rho_2$ of~(\ref{eq:2eq}). Let $k$ be as
  in~(\ref{eq:k}).  As in the proof of Theorem~\ref{thm:existence},
  \textbf{(v)}--\textbf{($\boldsymbol{\nu}$)}--\textbf{(I)} ensure
  that the set of
  hypotheses~\cite[\textbf{(H1*)}--\textbf{(H2*)}--\textbf{(H3*)}]{MercierStability}
  are satisfied. Thus, for any $t\geq 0$, we obtain
  \begin{eqnarray*}
    & &
    \norma{\rho_1 (t) -\rho_2(t)}_{\L1}
    \\
    & \leq &
    \norma{\rho_{0,1}-\rho_{0,2}}_{\L1}
    \\
    & &
    +
    \big[
    e^{k t } \tv(\rho_{0,1})
    +
    NW_N \norma{q_1}_{\L\infty([0,R])}
    \int_0^t e^{k(t-\tau)} \int_{\reali^N}
    \norma{\nabla\div(\nu_1(x)+\mathcal{I}_1(\rho_1))}
    \d{x}\d{\tau}\big]
    \\
    & &
    \qquad
    \times
    \int_0^t
    \norma{
      q_1'(\rho) \left(\nu_1(x) +\mathcal{I}_1(\rho_1)\right)
      -
      q_2'(\rho) \left(\nu_2(x) +\mathcal{I}_2(\rho_2)\right)
    }_{\L\infty(\reali^N)}
    \d{\tau}
    \\
    & &
    +
    \int_0^t \int_{\reali^N}
    \norma{
      q_1(\rho) \div\left(\nu_1(x) + \mathcal{I}_1(\rho_1)\right)
      -
      q_2(\rho) \div\left(\nu_2(x) + \mathcal{I}_2(\rho_2)\right)
    }_{\L\infty ([0,R])}
    \d{x} \d{\tau}
    \\
    & \leq &
    \norma{\rho_{0,1}-\rho_{0,2}}_{\L1}
    +
    \big[
    \tv(\rho_{0,1})
    +
    t\,NW_N  \norma{q_1}_{\L\infty([0,R])}
    \left(
      \norma{\nabla\div \nu_1}_{\L1}
      +
      C_I(\norma{\rho_{0,1}}_{\L1} ) \right)
    \big]
    \\
    & &
    \qquad
    \times
    \big(
    t \, \norma{q_1'-q_2'}_{\L\infty([0,R])}
    \norma{\nu_1+\mathcal{I}_1(\rho_1)}_{\L\infty}
    \\
    & &
    \qquad\qquad
    +
    t\,\norma{q_2'}_{\L\infty([0,R])}
    \norma{
      \nu_1 - \nu_2 + \mathcal{I}_1(\rho_1) -\mathcal{I}_2(\rho_1)
    }_{\L\infty}
    \\
    & &
    \qquad\qquad
    +
    \norma{q_2'}_{\L\infty([0,R])}
    \int_0^t \norma{
      \mathcal{I}_2\left(\rho_1(\tau)\right)
      -
      \mathcal{I}_2\left(\rho_2(\tau)\right)
    }_{\L\infty}
    \d{\tau}
    \big)
    \\
    & &
    +
    t \norma{q_2}_{\L\infty([0,R])} \norma{
      \div\left(
        \nu_1 - \nu_2 + \mathcal{I}_1(\rho_1) - \mathcal{I}_2(\rho_1)
      \right)
    }_{\L1}
    \\
    & &  + t\norma{q_1-q_2}_{\L\infty([0,R])}\norma{\div(\nu_1+\mathcal{I}_1(\rho_1))}_{\L1}
    +
    \norma{q_2}_{\L\infty([0,R])} \int_0^t
    \modulo{\div\left(\mathcal{I}_2(\rho_1)-\mathcal{I}_2(\rho_2)\right)}
    \d\tau.
  \end{eqnarray*}
  Hence, denoting
  \begin{eqnarray*}
    a(t)
    & = &
    \tv(\rho_{0,1})
    +
    t\,NW_N \, \norma{q_1}_{\L\infty([0,R])}
    \left(
      \norma{\nabla\div \nu_1}_{\L1}
      +
      C_I(\norma{\rho_{0,1}}_{\L1} )
    \right)
    \\
    b(t)
    & = &
    K_I \left(
      \norma{q_2'}_{\L\infty([0,R])} e^{k t} a(t)
      +
      \norma{q_2}_{\L\infty([0,R])}
    \right)
    \\
    \gamma_1
    & = &
    \norma{q_1'-q_2'}_{\L\infty([0,R])}
    \left(
      \norma{\nu_1}_{\L\infty}
      +
      C_I(\norma{\rho_{0,1}}_{\L1} )
    \right)
    +
    \norma{q_2'}_{\L\infty([0,R])} \norma{\nu_1-\nu_2}_{\L\infty}
    \\
    & &
    \qquad
    +
    \norma{q_2'}_{\L\infty([0,R])} \sup_{\rho\in\L1(\reali^N;[0,R])}
    \norma{\mathcal{I}_1(\rho)-\mathcal{I}_2(\rho)}_{\L\infty}
    \\
    \gamma_2
    & = &
    \norma{q_1-q_2}_{\L\infty([0,R])}
    \left(
      \norma{\div\nu_1}_{\L1}
      +
      C_I(\norma{\rho_{0,1}}_{\L1} )
    \right)
    \\
    & &
    +
    \norma{q_2}_{\L\infty([0,R]) }
    \left(
      \norma{\div(\nu_1-\nu_2)}_{\L1}
      +\sup_{\rho\in\L1(\reali^N;[0,R])}
      \norma{\div\mathcal{I}_1(\rho)-\div\mathcal{I}_2(\rho)}_{\L1}
    \right)
  \end{eqnarray*}
  we get
  \begin{displaymath}
    \norma{\rho_1-\rho_2(t)}_{\L1}
    \leq
    \norma{\rho_{0,1}-\rho_{0,2}}_{\L1}
    +
    \gamma_1 \, t \, e^{k t} \, a(t)
    +
    t \, \gamma_2
    +
    b(t) \int_0^t \norma{\rho_1-\rho_2(\tau)} \, \d{\tau}\,.
  \end{displaymath}
  By Gronwall Lemma, we obtain
  \begin{eqnarray*}
    \norma{\rho_1-\rho_2(t)}_{\L1}
    &\leq&
    \norma{\rho_{0,1}-\rho_{0,2}}_{\L1}
    +
    \gamma_1 \, t \, e^{k t} \, a(t)
    +
    t \, \gamma_2\\
    && +b(t)e^{\int_0^t b(u)\d{u}} \int_0^t e^{-\int_0^\tau b(u)\d{u}}
    (\norma{\rho_{0,1}-\rho_{0,2}}_{\L1}
    +
    \gamma_1  \tau e^{k \tau} \, a(\tau)
    +
    \tau \gamma_2)\d{\tau}\,.
  \end{eqnarray*}
  Noting that
  \[
  e^{-\int_0^t b(u)\d{u}}+b(t)\int_0^t e^{-\int_0^\tau
    b(u)\d{u}}\d{\tau}\leq \frac{b(t)}{b(0)}
  \]
  we get
  \begin{eqnarray*}
    \norma{\rho_1-\rho_2(t)}_{\L1}
    & \leq&
    \left(    \norma{\rho_{0,1}-\rho_{0,2}}_{\L1}
      +
      \gamma_1 \, t \, e^{k t} \, a(t)
      +
      t \, \gamma_2\right)\frac{b(t)}{b(0)}e^{t b(t)}\\
  \end{eqnarray*}
  completing the proof.
\end{proofof}

\subsection{Driving examples}

\begin{proofof}{Proposition~\ref{prop:invariance}}
  Let $\rho$ solve~\eqref{eq:General} in the sense of
  Definition~\ref{def:WS}. Define $w (t,x) = \nu (x) +
  \left(\mathcal{I} \left( \rho (t) \right) \right) (x)$.  Then,
  $\rho\in \C0([0,T]; \L1(\reali^N; [0,R]))$ is also a weak solution
  to~(\ref{eq:Kruzkov}). Then, set as above $q (\rho) = \rho \, v
  (\rho)$ and, for any $\phi\in \Cc\infty (\reali \times \reali^N;
  \reali)$, we have
  \begin{displaymath}
    \int_\reali\int_{\reali^N}
    \left(
      \rho \, \partial_t \phi
      +
      q(\rho) \, w(t,x)  \cdot \nabla\phi(t,x)
    \right)
    \d{x}\, \d{t} = 0\,.
  \end{displaymath}
  Fix positive $T$ and $M$. For $\alpha > 0$ define $Y_\alpha \in
  \C\infty(\reali; \reali)$ so that $Y_\alpha(s)=0$ for $s\leq 0$,
  $Y_\alpha (s)=1$ for $s\geq \alpha$ and $Y_\alpha' (s)\geq 0$ for
  all $s$. Let $\zeta(t) = Y_\alpha(t-\alpha)-Y_\alpha(t-T)$.

  Call $K_M = B (0,M) \setminus \Omega$. Set $\mu \in \Cc\infty
  ([-1,1]; [0,1])$ such that $\int_\reali \mu(r) \, \d{r}=1$. For
  $\theta > 0$, let $\mu_\theta(x) = \frac{1}{\theta^N} \,
  \mu\left(\norma{x} / \theta\right)$ and define $\psi = \caratt{K_M}
  * \mu_\theta$, the convolution being over all of $\reali^N$. Note
  that $\nabla\psi = 0$ on $\reali^N\setminus B(\partial K_M, \theta)$. If
  $x\in B(\partial K_M, \theta)$, we have
  \begin{displaymath}
    \nabla\psi(x)
    =
    \int_{K_M} \nabla\mu_\theta (x-y) \, \d{y}
    =
    -\int_{\partial K_M} \mu_\theta (x-y)\, \tilde n(y) \, \d{\sigma(y)} \,,
  \end{displaymath}
  where $\tilde n (x)$ is the outer normal to $K_M$ at any $x
  \in \partial K_M$.  Then, we choose $\phi(t,x) = \zeta(t) \,
  \psi(x)$. Clearly, $\phi \in \Cc\infty (\reali \times \reali^N;
  \reali)$. The definition of weak solution gives
  \begin{displaymath}
    0
    =
    \int_{\reali} \int_{\reali^N}
    \left(
      \rho(t,x)\, \zeta'(t) \, \psi(x)
      +
      q\left(\rho(t,x)\right) \, w(t,x) \, \zeta(t) \, \nabla\psi(x)
    \right)
    \d{x} \d{t}\,.
  \end{displaymath}
  Letting $\alpha \to 0$, we obtain
  \begin{eqnarray*}
    0
    & = &
    \int_{\reali^N}
    \left(\rho(0,x)-\rho(T,x) \right) \psi(x) \,\d{x}
    +
    \int_0^T\int_{\reali^N }
    q\left(\rho(t,x)\right) \, w(t,x) \, \nabla\psi(x) \, \d{x} \, \d{t}
    \\
    & = &
    \int_{\reali^N} \left(\rho(0,x)-\rho(T,x)\right)  \psi(x) \, \d{x}
    \\
    & &
    \qquad -
    \int_0^T\int_{\reali^N }\int_{\partial K_M}
    q\left(\rho(t,x)\right) w(t,x)\cdot \tilde n(y) \, \mu_\theta (x-y)\,
    \d{\sigma(y)} \, \d{x}\, \d{t}
    \\
    & = &
    \int_{\reali^N} \left(\rho(0,x)-\rho(T,x)\right)  \psi(x) \, \d{x}
    \\
    & &
    \qquad -
    \int_0^T \int_{\partial K_M} \int_{ B(0,1) }
    q\left(\rho(t,y+\theta z)\right) \, w(t,y+\theta z)\cdot \tilde n(y) \,
    \mu (\norma{z}) \,
    \d{z} \, \d{\sigma(y)} \, \d{t}\,.
  \end{eqnarray*}
  That is to say, setting $H_\theta (t) = \{(y,z):\; w(t,y+\theta z)
  \cdot \tilde n (y)\leq 0\}$ and $\partial\Omega_M = \partial\Omega \cap B
  (0,M)$
  \begin{eqnarray*}
    \!\!\!\!\!
    & &
    \int_{\reali^N} \left( \rho(0,x) - \rho(T,x) \right)  \psi(x) \, \d{x}
    \\
    \!\!\!\!\!\!\!
    & = &
    \int_0^T \int_{\partial K_M} \int_{ B(0,1) }
    q\left(\rho(t,y+\theta z)\right) \, w(t,y+\theta z)\cdot \tilde n (y) \,
    \mu (\norma{z})  \, \d{z} \, \d{\sigma(y)} \, \d{t}
    \\
    \!\!\!\!\!\!\!
    & \geq &
    \int_0^T \int_{\partial K_M} \int_{ B(0,1) }
    \caratt{H_\theta (t)} (y,z) \,
    q\left(\rho(t,y+\theta z)\right) \, w(t,y+\theta z) \cdot \tilde n (y) \,
    \mu (\norma{z}) \,
    \d{z} \, \d{\sigma(y)} \, \d{t}
    \\
    %%%%%%%%%%%%%%%%%%%%
    \!\!\!\!\!\!\!
    & = &
    \int_0^T \int_{\partial \Omega_M} \int_{ B(0,1) }
    \caratt{H_\theta (t)} (y,z) \,
    q\left(\rho(t,y+\theta z)\right) \, w(t,y+\theta z) \cdot \tilde n (y) \,
    \mu (\norma{z}) \,
    \d{z} \, \d{\sigma(y)} \, \d{t}
    \\
    \!\!\!\!\!\!\!
    & &
    +
    \int_0^T \!\!\! \int_{\partial K_M\setminus \partial \Omega_M} \!\! \int_{B(0,1)}
    \caratt{H_\theta (t)} (y,z) \,
    q\left(\rho(t,y+\theta z)\right) \, w(t,y+\theta z) \cdot \tilde n (y) \,
    \mu (\norma{z})
    \d{z} \d{\sigma(y)}  \d{t}
    \\
    \!\!\!\!\!\!\!
    & \geq &
    \int_0^T \int_{\partial \Omega_M} \int_{ B(0,1)}
    \caratt{H_\theta (t)} (y,z) \,
    \norma{q}_{\L\infty} \, w(t,y+\theta z) \cdot \tilde n (y) \, \mu (\norma{z})
    \, \d{z} \, \d{\sigma(y)} \, \d{t}
    +
    m
  \end{eqnarray*}
  where $m$ can be arbitrarily small provided $M$ is sufficiently
  large, since $\rho \in \L1 ([0,T] \times \reali^N; [0,R])$. Letting
  $\theta \to 0$, $M \to \infty$ and using~\eqref{eq:Nagumo}, by the
  Dominated Convergence Theorem we get
  \begin{displaymath}
    0
    \leq
    \int_{{}^c\Omega} \left(\rho(0,x)-\rho(T,x) \right) \d{x}
    =
    -\int_{{}^c\Omega} \rho(T,x) \, \d{x}\,,
  \end{displaymath}
  since by hypothesis $\spt \rho_0\subset\Omega$.  Besides, the
  positivity of $\rho$ gives us $ -\int_{{}^c\Omega} \rho(t,x)
  \d{x}\leq 0$.  Finally, we have $\int_{{}^c\Omega}\rho(T,x)=0$ and
  $\spt \rho(T)\subset \Omega$.
\end{proofof}

\begin{proofof}{Lemma~\ref{lem:Good}}
  Let $\rho\in \L1(\reali^N; [0,R])$. Note that $\mathcal{I}$ can be
  rewritten as $\mathcal{I}(\rho) = - \epsilon\, \rho * (\nabla\eta)
  \left/ \sqrt{1+\norma{\rho*\nabla \eta}^2}\right.$. By the
  assumptions on $\eta$ and the properties of the convolution product,
  we have
  \begin{eqnarray*}
    \norma{\mathcal{I}(\rho)}_{\L\infty}
    & \leq &
    \epsilon \, \norma{\rho}_{\L1} \norma{\nabla\eta}_{\L\infty}\,,
    \\
    \norma{\mathcal{I}(\rho)}_{\L1}
    & \leq &
    \epsilon \, \norma{\rho}_{\L1}\norma{\nabla\eta}_{\L1}\,,
    \\
    \norma{\nabla\mathcal{I}(\rho)}_{\L\infty}
    & \leq &
    \epsilon \,
    \norma{\rho}_{\L1}  \norma{\nabla \eta}_{\W1\infty}
    \left(
      1+
      R^2 \norma{\nabla\eta}_{\L1}
      \norma{\nabla^2\eta}_{\L1}
    \right) \, ,
    \\
    \norma{\nabla\mathcal{I}(\rho)}_{\L1}
    & \leq &
    \epsilon \, \norma{\rho}_{\L1}
    \norma{\nabla \eta}_{\W1\infty}
    \left(
      1+
      R^2 \norma{\nabla\eta}_{\L1}
      \norma{\nabla^2\eta}_{\L1}
    \right)\,.
  \end{eqnarray*}
  Hence~\textbf{(I.1)} is satisfied. Let us check now~\textbf{(I.2)}
  \begin{displaymath}
    \norma{\nabla^2 \mathcal{I}(\rho)}_{\L1}
    \leq
    \epsilon \, R^2 \, \norma{\rho}_{\L1}\norma{\nabla\eta}_{\W21}
    \norma{\nabla\eta}_{\W11}^2
    (1+ 4
    +
    3 R^2 \norma{\nabla\eta}_{\W11}^2
    )\,.
  \end{displaymath}
  Let $r_1, r_2\in \L1(\reali^N; [0,R])$. We have:
  \begin{eqnarray*}
    \norma{\mathcal{I}(r_1)-\mathcal{I}(r_2)}_{\L\infty}
    & \leq &
    \epsilon \, \left(
      1
      +
      R^2 \norma{\nabla \eta}_{\L1}^2
    \right)  \norma{\nabla\eta}_{\L\infty}
    \norma{r_1-r_2}_{\L1}\,,
    \\
    \norma{\mathcal{I}(r_1)-\mathcal{I}(r_2) }_{\L1}
    & \leq &
    \epsilon \, \left(
      1
      + R^2 \norma{\nabla \eta}_{\L1}^2
    \right)   \norma{\nabla\eta}_{\L1}
    \norma{r_1-r_2}_{\L1}\,,
    \\
    \norma{\div(\mathcal{I}(r_1)-\mathcal{I}(r_2) )}_{\L1}
    & \leq &
    \epsilon \, \norma{r_1-r_2}_{\L1} \norma{\nabla\eta}_{\W11}
    (1 + 8 R^2 \norma{\nabla \eta}_{\W11}^2 + 3R^4\norma{\nabla\eta}_{\W11}^4) \,,
  \end{eqnarray*}
  completing the proof.
\end{proofof}

\begin{proofof}{Proposition~\ref{prop:GoodP}}
  The fact that~\textbf{($\boldsymbol{\nu}$)} holds follows through
  simple computations from~\textbf{($\boldsymbol{\Omega}$)},
  \textbf{(g)}, \textbf{($\boldsymbol{\eta}$)},
  \textbf{($\boldsymbol{\alpha}$)} and the assumptions on
  $\partial\Omega$ or
  $\alpha,d_{\partial\Omega}$. Condition~\textbf{(I)} follows from
  Lemma~\ref{lem:Good}.

  To prove the invariance property \textbf{(P)}, we verify that
  \begin{displaymath}
    \left(  g (x) + \delta (x) + \left(\mathcal{I} (\rho)\right) (x)\right)
    \cdot
    n (x) \geq 0
  \end{displaymath}
  for all $x \in \partial\Omega$. By~\textbf{(g)}, \eqref{eq:delta}
  and~\eqref{eq:IGood}, it is sufficient to prove that
  \begin{displaymath}
    \lambda
    \geq
    \epsilon
    \sup_{x \in \partial\Omega}
    \frac{(\rho * \nabla \eta) (x) \cdot n (x)}{\sqrt{1 + \norma{(\rho*\nabla\eta) (x)}^2}} \,.
  \end{displaymath}
  Note that
  \begin{eqnarray*}
    \sup_{x \in \partial\Omega}
    \frac{(\rho * \nabla \eta) (x) \cdot n (x)}{\sqrt{1 + \norma{(\rho*\nabla\eta) (x)}^2}}
    & \leq &
    \sup_{x \in \partial\Omega}
    (\rho * \nabla \eta) (x) \cdot n (x)
    \leq
    \norma{\rho}_{\L\infty} \, \norma{\nabla\eta}_{\L1} \, \norma{n}_{\L\infty}
    \leq
    R \, \norma{\nabla\eta}_{\L1}
  \end{eqnarray*}
  completing the proof, by Proposition~\ref{prop:invariance}.
\end{proofof}

\begin{proofof}{Lemma~\ref{lem:PT}}
  Note that $\mathcal{I}$ as $\mathcal{I}(\rho) = \epsilon
  \int_{\reali^N} \rho(y) \, \nabla\left( \eta(x-y)\,
    g\left((y-x)\cdot \nu(x)\right) \right) \d{y}$.  Let $r_\eta$ be
  such that $B (0,r_\eta) \supset \spt \eta$. Fix $\rho\in
  \L1(\reali^N; [0,R])$. Let us check~\textbf{(I.1)}:
  \begin{eqnarray*}
    \norma{ \mathcal{I} (\rho) }_{\L\infty}
    & \leq &
    \epsilon \, \norma{\rho}_{\L1} \norma{\eta}_{\W1\infty}
    \norma{g}_{\W1\infty}
    \left(1+ (1+r_\eta) \norma{\nu}_{\W1\infty} \right)\,,
    \\
    \norma{ \mathcal{I} (\rho) }_{\L1}
    & \leq &
    \epsilon \, \norma{\rho}_{\L1} \norma{\eta}_{\W11}
    \norma{g}_{\W1\infty}
    \left(1+ (1+r_\eta) \norma{\nu}_{\W1\infty} \right) \,,
    \\
    \norma{ \nabla \mathcal{I} (\rho) }_{\L\infty}
    & \leq &
    \epsilon \, \norma{\rho}_{\L1}
    \norma{\eta}_{\W2\infty} \norma{g}_{\W2\infty}
    \\
    & &
    \times \left(
      1
      +
      2 (r_\eta+1) \norma{\nu}_{\W1\infty}
      +
      (1+r_\eta)^2 \norma{\nu}_{\W1\infty}^2
      +
      (2+r_\eta) \norma{\nabla\nu}_{\W1\infty}
    \right) ,
    \\
    \norma{ \nabla \mathcal{I} (\rho) }_{\L1}
    & \leq &
    \epsilon \,
    \norma{\rho}_{\L1} \norma{\eta}_{\W21} \norma{g}_{\W2\infty}
    \\
    & &
    \times
    \left(
      1
      +
      2 (r_\eta+1) \norma{\nu}_{\W1\infty}
      +
      (1+r_\eta)^2 \norma{\nu}_{\W1\infty}^2
      +
      (2+r_\eta) \norma{\nabla\nu}_{\W1\infty}
    \right).
  \end{eqnarray*}
  Passing to~\textbf{(I.2)}:
  \begin{eqnarray*}
    \norma{\nabla^2 \mathcal{I}(\rho)}_{\L1}
    & \leq &
    \epsilon \,
    \norma{\rho}_{\L1}
    \norma{\eta}_{\W31} \norma{g}_{\W3\infty}
    \\
    & &
    \times
    \Big[
    1
    +
    3 (1+r_\eta) \norma{\nu}_{\W1\infty}
    +
    3 (1+r_\eta)^2 \norma{\nu}_{\W1\infty}^2
    +
    3 (2+r_\eta) \norma{\nabla\nu}_{\W1\infty}
    \\
    & &
    \qquad
    +
    (1+r_\eta)^3 \norma{\nu}_{\W1\infty}^3
    +
    3 (2+r_\eta)^2 \norma{\nu}_{\W2\infty}^2
    +
    (3+r_\eta) \norma{\nabla^2\nu}_{\W1\infty}
    \Big] \,.
  \end{eqnarray*}
  Let $r_1, r_2 \in \L1(\reali^N; [0,R])$. The operator $\mathcal{I}$
  is linear in $\rho$, hence
  \begin{eqnarray*}
    \norma{\mathcal{I}(r_1)-\mathcal{I}(r_2)}_{\L\infty}
    & \leq &
    \epsilon \,\norma{r_1-r_2}_{\L1} \norma{\eta}_{\W1\infty} \norma{g}_{\W1\infty}
    (1+(1+ r_\eta) \norma{\nu}_{\W1\infty})\,,
    \\
    \norma{\mathcal{I}(r_1)-\mathcal{I}(r_2)}_{\L\infty}
    & \leq &
    \epsilon \, \norma{r_1-r_2}_{\L1} \norma{\eta}_{\W11} \norma{g}_{\W1\infty}
    (1+ (1+r_\eta) \norma{\nu}_{\W1\infty})\,,
    \\
    \norma{\div(\mathcal{I}(r_1)-\mathcal{I}(r_2))}_{\L\infty}
    & \leq &
    \epsilon \, \norma{r_1-r_2}_{\L1} \norma{\eta}_{\W21}
    \norma{g}_{\W2\infty}
    \\
    & &
    \times
    \Big[
    1
    +
    2 (1+r_\eta) \norma{\nu}_{\W1\infty}
    +
    (1+r_\eta)^2 \norma{\nu}_{\W1\infty}^2
    \\
    & &
    \qquad \qquad
    +
    (2+r_\eta)\norma{\nabla\nu}_{\W1\infty}
    \Big]
  \end{eqnarray*}
  showing that~\textbf{(I.3)} is satisfied.
\end{proofof}

\begin{proofof}{Proposition~\ref{prop:GoodPT}}
  Fix $\rho\in \L1(\reali^N; [0,R])$. Let us check~\textbf{(I.1)}
  and~\textbf{(I.2)}:
  \begin{eqnarray*}
    \norma{ \mathcal{I} (\rho) }_{\L\infty}
    & \leq &
    \epsilon \, \norma{\rho}_{\L1} \norma{\eta}_{\W1\infty}
    \norma{\phi}_{\W1\infty}
    \left(1+ (1+\ell) \norma{g}_{\W1\infty} \right)\,,
    \\
    \norma{ \mathcal{I} (\rho) }_{\L1}
    & \leq &
    \epsilon \, \norma{\rho}_{\L1} \norma{\eta}_{\W11}
    \norma{\phi}_{\W1\infty}
    \left(1+ (1+\ell) \norma{g}_{\W1\infty} \right) \,,
    \\
    \norma{ \nabla \mathcal{I} (\rho) }_{\L\infty}
    & \leq &
    \epsilon \, \norma{\rho}_{\L1}
    \norma{\eta}_{\W2\infty} \norma{\phi}_{\W2\infty}
    \\
    & &
    \times \left(
      1
      +
      2 (\ell+1) \norma{g}_{\W1\infty}
      +
      (1+\ell)^2 \norma{g}_{\W1\infty}^2
      +
      (2+\ell) \norma{\nabla g}_{\W1\infty}
    \right)
    \\
    & &
    \times
    \left(1 +
      R^2 \, \norma{\eta}_{\W11}^2
      \norma{\phi}_{\W1\infty}^2
      \left(1+ (1+\ell) \norma{g}_{\W1\infty} \right)^2
    \right) \,,
    \\
    \norma{ \nabla \mathcal{I} (\rho) }_{\L1}
    & \leq &
    \epsilon \,
    \norma{\rho}_{\L1} \norma{\eta}_{\W21} \norma{\phi}_{\W2\infty}
    \\
    & &
    \times
    \left(
      1
      +
      2 (\ell+1) \norma{g}_{\W1\infty}
      +
      (1+\ell)^2 \norma{g}_{\W1\infty}^2
      +
      (2+\ell) \norma{\nabla g}_{\W1\infty}
    \right)
    \\
    & &
    \times
    \left(1 +
      R^2 \, \norma{\eta}_{\W11}^2
      \norma{\phi}_{\W1\infty}^2
      \left(1+ (1+\ell) \norma{g}_{\W1\infty} \right)^2
    \right) \,.
  \end{eqnarray*}
  Passing to~\textbf{(I.2)}:
  \begin{eqnarray*}
    \norma{\nabla^2 \mathcal{I}(\rho)}_{\L1}
    & \leq &
    \epsilon \,
    \norma{\rho}_{\L1}
    \norma{\eta}_{\W31} \norma{\phi}_{\W3\infty}
    \\
    & &
    \times
    \Big[
    1
    +
    3 (1+\ell) \norma{g}_{\W1\infty}
    +
    3 (1+\ell)^2 \norma{g}_{\W1\infty}^2
    +
    3 (2+\ell) \norma{\nabla g}_{\W1\infty}
    \\
    & &
    \qquad
    +
    (1+\ell)^3 \norma{g}_{\W1\infty}^3
    +
    3 (2+\ell)^2 \norma{g}_{\W2\infty}^2
    +
    (3+\ell) \norma{\nabla^2g}_{\W1\infty}
    \Big]
    \\
    & &
    \times
    \left(1 +
      R^2 \, \norma{\eta}_{\W11}^2
      \norma{\phi}_{\W1\infty}^2
      \left(1+ (1+\ell) \norma{g}_{\W1\infty} \right)^2
    \right)
    \\
    & &
    +
    3 \,
    \epsilon \, R
    \norma{\eta}_{\W11}
    \norma{\phi}_{\W1\infty}
    \left(1+ (1+\ell) \norma{g}_{\W1\infty} \right)
    \\
    & &
    \times
    \norma{\rho}_{\L1}
    \norma{\eta}_{\W21} \norma{\phi}_{\W2\infty}
    \\
    & &
    \times \left(
      1
      +
      2 (\ell+1) \norma{g}_{\W1\infty}
      +
      (1+\ell)^2 \norma{g}_{\W1\infty}^2
      +
      (2+\ell) \norma{\nabla g}_{\W1\infty}
    \right)
    \\
    & &
    \times
    \left(1 +
      R^2 \, \norma{\eta}_{\W11}^2
      \norma{\phi}_{\W1\infty}^2
      \left(1+ (1+\ell) \norma{g}_{\W1\infty} \right)^2
    \right)
    \\
%%%%%%%%%%%%%%%%%%%%%%%%%%%%
%%%%%%%%%%%%%%%%%%%%%%%%%%%%
    & \leq &
    \epsilon \,
    \norma{\rho}_{\L1}
    \norma{\eta}_{\W31} \norma{\phi}_{\W3\infty}
    \\
    & &
    \times
    \Big[
    1
    +
    3 (1+\ell) \norma{g}_{\W1\infty}
    +
    3 (1+\ell)^2 \norma{g}_{\W1\infty}^2
    +
    3 (2+\ell) \norma{\nabla g}_{\W1\infty}
    \\
    & &
    \qquad
    +
    (1+\ell)^3 \norma{g}_{\W1\infty}^3
    +
    3 (2+\ell)^2 \norma{g}_{\W2\infty}^2
    +
    (3+\ell) \norma{\nabla^2g}_{\W1\infty}
    \\
    & &
%%%%%%%%%%%%%%%%%%%%%%%%%%%%%%%%%%%%%
    \qquad
    +
    3 \,
    R
    \norma{\eta}_{\W11}
    \norma{\phi}_{\W1\infty}
    \left(1+ (1+\ell) \norma{g}_{\W1\infty} \right)
    \\
    & &
    \qquad
    \times \left(
      1
      +
      2 (\ell+1) \norma{g}_{\W1\infty}
      +
      (1+\ell)^2 \norma{g}_{\W1\infty}^2
      +
      (2+\ell) \norma{\nabla g}_{\W1\infty}
    \right)
    \Big]
    \\
    & &
    \times
    \left(1 +
      R^2 \, \norma{\eta}_{\W11}^2
      \norma{\phi}_{\W1\infty}^2
      \left(1+ (1+\ell) \norma{g}_{\W1\infty} \right)^2
    \right)
    \,.
  \end{eqnarray*}
  Finally, \textbf{(I.3)} is proved exactly as in
  Lemma~\ref{lem:Good}.

  To prove the invariance property \textbf{(P)}, following the same
  procedure as in the proof of Proposition~\ref{prop:GoodP}, we check
  that $\left(\nu (x) + \left(\mathcal{I} (\rho)\right) (x)\right)
  \cdot n (x) \geq 0$ for all $x \in \partial\Omega$. In fact, if $x
  \in
  \partial\Omega$,
  \begin{eqnarray*}
    & &  \left(\nu (x) + \left(\mathcal{I} (\rho)\right) (x)\right) \cdot n (x)
    \\
    & = &
    \left(g (x) + \lambda \, n (x)
      -
      \epsilon \,
      \frac{
        \nabla \!\!
        \int_{\Omega} \rho(y) \, \eta(x-y)  \,
        \phi \!\left((y-x) \cdot g(x)\right)\d{y}
      }{\sqrt{1+
          \norma{
            \nabla \!\!
            \int_{\Omega} \rho(y) \, \eta(x-y)  \,
            \phi \!\left((y-x) \cdot g(x)\right)\d{y}}^2}
      }\right)
    \cdot n (x)
    \\
    & \geq &
    \lambda (x)
    +
    g (x) \cdot n (x)
    -
    \epsilon\, \norma{\phi}_{\L\infty}
    \int_\Omega \rho (y)
    \modulo{\nabla\left(\eta (x-y)\right) \cdot n (x)} \d{y}
    \\
    & &
    \quad
    -
    \epsilon
    \int_\Omega \rho (y) \, \eta (x-y) \, \phi' \left((y-x)\cdot g (x)\right)
    \left(
      (y-x) \, \nabla g (x) \, n (x)
      -
      g (x) \cdot n (x)
    \right)
    \d{y}
    \\
    & \geq &
    \lambda (x)
    -
    \epsilon\, R \, \norma{\phi}_{\L\infty} \, \norma{\nabla\eta}_{\L1}
    -
    \epsilon\, R\, \norma{\phi'}_{\L\infty}
    \int_\Omega  \eta (x-y) \,
    \modulo{(y-x) \, \nabla g (x) \, n (x)}
    \d{y} \,.
    \\
    & \geq &
    \lambda (x)
    -
    \epsilon\, R
    \left(
      \norma{\phi}_{\L\infty} \, \norma{\nabla\eta}_{\L1}
      +
      \norma{\phi'}_{\L\infty} \, \norma{\eta}_{\L1} \, \ell \,
      \norma{\nabla g}_{\L\infty}
    \right)
  \end{eqnarray*}
  completing the proof, by Proposition~\ref{prop:invariance}.
\end{proofof}

\begin{proofof}{Proposition~\ref{prop:PT}}
  Denote $\ell = \mbox{diam} \spt \eta$.  Straightforward computations
  give:
  \begin{eqnarray*}
    \norma{\mathcal{I} (\rho)}_{\L\infty}
    & \leq &
    \epsilon \,
    \norma{\rho}_{\L1} \,
    \norma{\nabla\eta}_{\L\infty} \,
    \norma{\phi}_{\L\infty} \,,
    \\
    \norma{\nabla\mathcal{I} (\rho)}_{\L\infty}
    & \leq &
    \epsilon \,
    \norma{\rho}_{\L1} \,
    \left(
      \norma{\nabla^2\eta}_{\L\infty} \,
      \norma{\phi}_{\L\infty}
      +
      \norma{\nabla\eta}_{\L\infty}\,
      \norma{\phi'}_{\L\infty}
      \left(
        \norma{g}_{\L\infty}
        +
        \ell \,
        \norma{\nabla g}_{\L\infty}
      \right)
    \right) \,,
    \\
    \norma{\mathcal{I} (\rho)}_{\L1}
    & \leq &
    \epsilon \,
    \norma{\rho}_{\L1} \,
    \norma{\nabla\eta}_{\L1} \,
    \norma{\phi}_{\L\infty} \,,
    \\
    \norma{\nabla\mathcal{I} (\rho)}_{\L1}
    & \leq &
    \epsilon \,
    \norma{\rho}_{\L1} \,
    \left(
      \norma{\nabla^2\eta}_{\L1} \,
      \norma{\phi}_{\L\infty}
      +
      \norma{\nabla\eta}_{\L1}\,
      \norma{\phi'}_{\L\infty}
      \left(
        \norma{g}_{\L\infty}
        +
        \ell \,
        \norma{\nabla g}_{\L\infty}
      \right)
    \right) \,,
    \\
    \norma{\nabla^2 \mathcal{I} (\rho)}_{\L1}
    & \leq &
    \epsilon \,
    \norma{\rho}_{\L1} \,
    \norma{\nabla\eta}_{\W21}
    \norma{\phi}_{\W2\infty}
    \left(
      1
      +
      3 \norma{g}_{\L\infty}
      +
      \norma{\nabla g}_{\L\infty}
      (3\ell +2)
      +
      \ell \norma{\nabla^2g}_{\L\infty}
    \right)
  \end{eqnarray*}
  giving~\textbf{(I.1)} and~\textbf{(I.2)}. The proof
  of~\textbf{(I.3)} is immediate by the linearity of $\mathcal{I}$ in
  $\rho$.
\end{proofof}

\appendix
\section{Appendix: Geometrical Issues Related to $\Omega$}
\label{app:G}

The framework presented in Section~\ref{sec:Des} can be adapted to
various real situations. For instance, the \emph{``walls''}
$\partial\Omega$ may stop the visibility of the individuals. Then,
from a modeling point of view, it can be reasonable to introduce the
set
\begin{equation}
  \label{eq:OmegaX}
  \Omega_x
  =
  \left\{
    y \in \Omega \colon
    x + \sigma (y-x) \in \Omega\,,
    \forall\, \sigma \in [0,1]
  \right\}
\end{equation}
of the points in $\Omega$ visible from $x$. Correspondingly, the
nonlocal operator $\mathcal{I}$ in~\eqref{eq:IGood} can be modified
intending the convolution as follows:
\begin{equation}
  \label{eq:ModifiedConvolution}
  (\rho*\eta) (x)
  =
  \int_{\Omega_x} \rho (y) \, \eta (x-y) \d{y} \,.
\end{equation}
The above relation means that the individual at $x$ evaluates an
average of the densities $\rho (y)$ at all values $y$ that are visible
from $x$. With these choices, the validity of condition~\textbf{(I)}
essentially depends on the geometry of $\Omega$. In particular, if
$\Omega$ is convex, then $\Omega_x = \Omega$ and~\textbf{(I)} holds by
Proposition~\ref{prop:GoodP}.

Here we only show how to choose a discomfort so that~\textbf{(P)}
holds
for~\eqref{eq:General}--\eqref{eq:IGood}--\eqref{eq:ModifiedConvolution}. The
case of the nonlocal operator~\eqref{eq:IPT} is entirely similar.

To this aim, introduce the set
\begin{equation}
  \label{eq:Hx}
  H_x
  =
  \left\{
    y \in \Omega \colon \nabla\eta (x-y) \cdot n (x) \geq 0
  \right\} \,.
\end{equation}

\begin{proposition}
  \label{prop:cond2}
  Assume that~\textbf{(\/$\boldsymbol{\Omega}$)}
  and~\textbf{($\boldsymbol{\eta}$)} hold.  Let $\mathcal{I}$ be as
  in~(\ref{eq:IGood}), where the convolution is intended as
  in~\eqref{eq:ModifiedConvolution}. If
  \begin{equation}
    \label{eq:DeltaA}
    \delta(x)
    =
    \epsilon \, R
    \int_{\Omega_x \cap H_x } \nabla \eta(x-y) \, \d{y}
    \quad
    \mbox{ for } x \in \partial\Omega
  \end{equation}
  then $\left(\delta(x)+\mathcal{I}(\rho)(x) \right)\cdot n(x)\geq 0$
  for all $x\in \partial\Omega$ and $\rho \in \L1(\reali^N; [0,R])$,
  so that~\textbf{(P)} is satisfied.

  If moreover $\eta (x) = \bar \eta (\norma{x})$ with $\bar\eta \in
  \Cc3 (\left[0, +\infty\right[;\reali^+)$ and $\bar\eta' \leq 0$,
  then $\Omega_x \subseteq H_x$ along $\partial\Omega$ and the
  discomfort can be defined on all $\Omega$ by
  \begin{displaymath}
    \delta(x) = \epsilon \, R \int_{\Omega_x} \nabla \eta(x-y) \, \d{y}
  \end{displaymath}
  satisfying to property~\textbf{(P)}.
\end{proposition}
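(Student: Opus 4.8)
The plan is to reduce everything to the pointwise Nagumo-type inequality of Proposition~\ref{prop:invariance} and then to an elementary sign estimate on the gradient of the restricted convolution. By Proposition~\ref{prop:invariance}, property \textbf{(P)} holds as soon as $\left(\nu(x)+\mathcal{I}(\rho)(x)\right)\cdot n(x)\geq 0$ for every $x\in\partial\Omega$ and every admissible $\rho$. Writing $\nu=g+\delta$ as in \eqref{eq:nu} and using $g(x)\cdot n(x)\geq 0$ from \textbf{(g)}, it suffices to prove the stated inequality $\left(\delta(x)+\mathcal{I}(\rho)(x)\right)\cdot n(x)\geq 0$. Since $\mathcal{I}(\rho)=-\epsilon\,\nabla(\rho*\eta)/\sqrt{1+\norma{\nabla(\rho*\eta)}^2}$ with the convolution \eqref{eq:ModifiedConvolution}, and the denominator is $\geq 1$, I would bound $-\mathcal{I}(\rho)(x)\cdot n(x)\leq\epsilon\,\left[\nabla(\rho*\eta)(x)\cdot n(x)\right]_+$, so that the whole claim reduces to
\[
\delta(x)\cdot n(x)\;\geq\;\epsilon\,\left[\nabla(\rho*\eta)(x)\cdot n(x)\right]_+ .
\]

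Next I would identify the right-hand side with $\delta\cdot n$ through the set $H_x$ of \eqref{eq:Hx}. On $\Omega_x\cap H_x$ the integrand $\nabla\eta(x-y)\cdot n(x)$ is nonnegative, while on $\Omega_x\setminus H_x$ its positive part vanishes, so \eqref{eq:DeltaA} gives exactly
\[
\delta(x)\cdot n(x)=\epsilon\,R\int_{\Omega_x}\left[\nabla\eta(x-y)\cdot n(x)\right]_+\d{y}.
\]
On the other hand, the contribution of the integrand to the directional derivative of \eqref{eq:ModifiedConvolution} (the \emph{interior} term) is $\int_{\Omega_x}\rho(y)\,\nabla\eta(x-y)\cdot n(x)\d{y}$; since $0\leq\rho\leq R$ this is dominated by $R\int_{\Omega_x}\left[\nabla\eta(x-y)\cdot n(x)\right]_+\d{y}=\delta(x)\cdot n(x)/\epsilon$, which is precisely the inequality needed.

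The main obstacle is the $x$-dependence of the integration domain $\Omega_x$. Differentiating $\int_{\Omega_x}\rho(y)\,\eta(x-y)\d{y}$ in the direction $n(x)$ produces, besides the interior term above, a boundary contribution from the motion of the \emph{shadow} part of $\partial\Omega_x$: the portion of $\partial\Omega_x$ lying on the fixed wall $\partial\Omega$ does not move, but the visibility boundary does, and there $\eta(x-y)\,\rho(y)$ need not vanish. The heart of the proof is to show that this extra term does not spoil the inequality $\delta\cdot n\geq\epsilon\left[\nabla(\rho*\eta)\cdot n\right]_+$, the natural route being to regularize $\caratt{\Omega_x}$ and to control the shadow contribution geometrically. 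In the convex case this difficulty disappears entirely, since $\Omega_x=\Omega$ is independent of $x$ and no boundary term arises; the general case is where the real work lies.

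Finally, for the radially symmetric kernel $\eta(x)=\bar\eta(\norma{x})$ with $\bar\eta'\leq 0$, I would show $\Omega_x\subseteq H_x$ along $\partial\Omega$, which lets one drop the intersection with $H_x$ in \eqref{eq:DeltaA}. The point is purely geometric: for $x\in\partial\Omega$ and $y\in\Omega_x$ the whole segment $[x,y]$ lies in $\Omega$, so $t\mapsto d_{\partial\Omega}\!\left(x+t(y-x)\right)$ is nonnegative and vanishes at $t=0$, whence its right derivative $n(x)\cdot(y-x)\geq 0$ (using $n=\nabla d_{\partial\Omega}$). Then $\nabla\eta(x-y)\cdot n(x)=\bar\eta'(\norma{x-y})\,(x-y)\cdot n(x)/\norma{x-y}\geq 0$, being a product of two nonpositive factors, so $y\in H_x$ and $\Omega_x\cap H_x=\Omega_x$. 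The simplified discomfort then satisfies the same inequality, and \textbf{(P)} follows as before.
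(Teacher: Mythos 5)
Your core argument coincides with the paper's own proof: reduce to the Nagumo condition of Proposition~\ref{prop:invariance}, discard the normalizing denominator (it is $\geq 1$ and can only help), observe that the supremum over $\rho\in\L1(\reali^N;[0,R])$ of $\int_{\Omega_x}\rho(y)\,\nabla\eta(x-y)\cdot n(x)\,\d{y}$ is attained at $\rho=R\,\caratt{\Omega_x\cap H_x}$ --- which is exactly your identity $\delta(x)\cdot n(x)=\epsilon\,R\int_{\Omega_x}\left[\nabla\eta(x-y)\cdot n(x)\right]_+\d{y}$ --- and, in the radial case, show $\Omega_x\subseteq H_x$ from the sign of $(y-x)\cdot n(x)$. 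Incidentally, your justification of that geometric step (the function $t\mapsto d_{\partial\Omega}\left(x+t(y-x)\right)$ is nonnegative and vanishes at $t=0$, so its right derivative $n(x)\cdot(y-x)$ is nonnegative) is cleaner than the paper's, which asserts that $\Omega_x$ is \emph{convex}; that is false in general, and star-shapedness with respect to $x$, which is what your argument uses, is all that holds and all that is needed.

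The one real divergence is the ``shadow'' term that you declare to be the heart of the proof and then leave open. The paper has no such step: its proof begins from the identity $\mathcal{I}(\rho)(x)=-\epsilon\int_{\Omega_x}\rho(y)\,\nabla\eta(x-y)\,\d{y}\,\big/\,\sqrt{1+\norma{\nabla(\rho*\eta)}^2}$, i.e.\ it reads \eqref{eq:IGood}--\eqref{eq:ModifiedConvolution} with the gradient falling on the kernel only, so the $x$-dependence of the domain $\Omega_x$ never enters, and your interior-term estimate already finishes the argument. Under that reading your proof is complete and essentially identical to the paper's, and the paragraph on the ``main obstacle'' can simply be deleted. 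Under the literal reading --- $\nabla$ applied to $x\mapsto\int_{\Omega_x}\rho(y)\,\eta(x-y)\,\d{y}$ --- you are right that the motion of the visibility boundary produces extra terms of no evident sign, and neither you nor the paper controls them; so what you have flagged is a genuine gap, but it is a gap in the paper's proof as much as in your own, which the paper resolves only implicitly, by the choice of how $\mathcal{I}$ is differentiated. The concrete fix for your write-up is therefore not analytic work but a declaration: state that $\nabla(\rho*\eta)$ is understood as $\int_{\Omega_x}\rho(y)\,\nabla\eta(x-y)\,\d{y}$, and your proof stands as written.
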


\begin{proof}
  We have $\mathcal{I}(\rho)(x) = -\epsilon
  \int_{\Omega_x}\rho(y)\nabla\eta(x-y)\d{y} \left/
    \sqrt{1+\norma{\nabla \rho*\eta}^2} \right.$. Hence, to ensure
  that $(\delta(x)+\mathcal{I}(\rho)(x))\cdot n(x)\geq 0$ for all
  $x\in \partial\Omega$ and $\rho\in \L1(\reali^N; [0,R])$ we require
  \begin{eqnarray*}
    \delta(x)\cdot  n(x)
    &\geq &
    \epsilon
    \sup_{\rho\in \L1(\reali^N;[0,R])}
    \int_{\Omega_x} \rho(y) \, \nabla\eta(x-y)\cdot n(x) \, \d{y} \,.
  \end{eqnarray*}
  In the latter expression, the supremum is attained for $\rho = R \,
  \caratt{\Omega_x \cap H_x}$, since $ \Omega_x\cap H_x = \{y\in
  \Omega_x\,;\; \nabla\eta(x-y)\cdot n(x)\geq 0\}$. It is thus
  sufficient to define $\delta $ so that
  \begin{eqnarray*}
    \delta(x)\cdot  n(x)
    & \geq &
    \epsilon \, R
    \int_{ \Omega_x \cap H_x } \nabla \eta(x-y)\cdot n(x) \, \d{y}\,,
  \end{eqnarray*}
  proving that~\eqref{eq:DeltaA} implies~\textbf{(P)}, by
  Proposition~\ref{prop:invariance}.

  If $\eta(x) = \bar\eta(\norma{x})$, then $\nabla\eta(x) =
  \bar\eta'(\norma{x}) \, \frac{x}{\norma{x}}$. Since $\bar \eta' \leq
  0$,
  \begin{eqnarray*}
    H_x
    & \supseteq &
    \left\{
      y\in \Omega_x \colon (x-y)\cdot n(x)\leq 0
    \right\}
    \\
    & = &
    \left\{
      y\in \Omega_x \colon
      y = x + a \, n(x) + \omega \mbox{ with }
      a \geq0 \mbox{ and } \omega\in n(x)^\perp
    \right\}
    \\
    & \supseteq &
    \Omega_x \,.
  \end{eqnarray*}
  The latter inclusion holds since $\Omega_x$ is a convex set
  contained in $\Omega$ and containing $x$.% , hence $\Omega_x$ is
  % contained in the half space supported by the tangent hyperplane at
  % $x$ to $\Omega$.
\end{proof}

The choice~\eqref{eq:IGood}--\eqref{eq:ModifiedConvolution} is
appealing from the modeling point of view, but not easily tractable
from both the numerical and the analytical points of view, without
major restrictions on the geometry of $\Omega$ and on
$\eta$. Therefore, we consider also the following choice.

\begin{proposition}
  \label{prop:cond}
  Assume that~\textbf{(\/$\boldsymbol{\Omega}$)}
  and~\textbf{($\boldsymbol{\eta}$)} hold.  Let $\mathcal{I}$ be as
  in~(\ref{eq:IGood}), with the usual
  convolution~\eqref{eq:convolution}. If
  \begin{displaymath}
    \delta(x)
    =
    \epsilon \, R
    \int_{H_x} \nabla \eta(x-y) \, \d{y}
    \qquad \mbox{ for } x \in \partial\Omega
  \end{displaymath}
  then $\left(\delta(x)+\mathcal{I}(\rho)(x) \right) \cdot n(x)\geq 0$
  for all $x\in \partial\Omega$ and $\rho \in \L1(\reali^N; [0,R])$, so
  that~\textbf{(P)} holds.
\end{proposition}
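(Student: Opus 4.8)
The plan is to follow exactly the strategy of the proof of Proposition~\ref{prop:cond2}: reduce the invariance~\textbf{(P)} to the pointwise Nagumo-type inequality~\eqref{eq:Nagumo} on $\partial\Omega$ and then invoke Proposition~\ref{prop:invariance}. The structural simplification compared with Proposition~\ref{prop:cond2} is that, for the usual convolution~\eqref{eq:convolution}, the domain of integration $\Omega$ does \emph{not} depend on $x$; differentiating under the integral sign is therefore immediate and produces no boundary term, in contrast with the modified convolution~\eqref{eq:ModifiedConvolution}, whose domain $\Omega_x$ varies with $x$. This is precisely why here the admissible region collapses from $\Omega_x\cap H_x$ to $H_x$.

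First I would make $\mathcal{I}(\rho)$ from~\eqref{eq:IGood} explicit as
\[
  \mathcal{I}(\rho)(x)
  =
  -\epsilon\,
  \frac{\int_\Omega \rho(y)\,\nabla\eta(x-y)\,\d{y}}
       {\sqrt{1+\norma{\nabla(\rho*\eta)(x)}^2}}
\]
and pair it with the inward normal $n(x)$ along $\partial\Omega$. Writing $D(x)=\sqrt{1+\norma{\nabla(\rho*\eta)(x)}^2}\geq 1$, the target inequality $\left(\delta(x)+\mathcal{I}(\rho)(x)\right)\cdot n(x)\geq 0$ becomes
\[
  \delta(x)\cdot n(x)
  \geq
  \frac{\epsilon}{D(x)}\int_\Omega \rho(y)\,\nabla\eta(x-y)\cdot n(x)\,\d{y}\,.
\]
Since $0<1/D(x)\leq 1$ and the supremum over $\rho$ of the integral will turn out to be non negative, the right-hand side is dominated by $\epsilon\sup_{\rho}\int_\Omega \rho(y)\,\nabla\eta(x-y)\cdot n(x)\,\d{y}$ uniformly in the solution-dependent value of $\nabla(\rho*\eta)$; hence it suffices to control this denominator-free supremum.

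The next step is a pointwise (bathtub) maximisation over $\rho\in\L1(\reali^N;[0,R])$ supported in $\Omega$: the integrand $\rho(y)\,\nabla\eta(x-y)\cdot n(x)$ is largest when $\rho(y)=R$ wherever $\nabla\eta(x-y)\cdot n(x)\geq 0$ and $\rho(y)=0$ elsewhere, so by the definition~\eqref{eq:Hx} of $H_x$ the supremum equals $R\int_{H_x}\nabla\eta(x-y)\cdot n(x)\,\d{y}$, attained at $\rho=R\,\caratt{H_x}$. The sufficient condition therefore reads
\[
  \delta(x)\cdot n(x)
  \geq
  \epsilon\,R\int_{H_x}\nabla\eta(x-y)\cdot n(x)\,\d{y}\,,
\]
which the prescribed discomfort $\delta(x)=\epsilon\,R\int_{H_x}\nabla\eta(x-y)\,\d{y}$ meets with equality. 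This gives $\left(\delta(x)+\mathcal{I}(\rho)(x)\right)\cdot n(x)\geq 0$ on $\partial\Omega$; recalling that the geometric part $g$ of $\nu=g+\delta$ satisfies $g(x)\cdot n(x)\geq 0$ by~\textbf{(g)}, the full field obeys $\left(\nu(x)+\mathcal{I}(\rho)(x)\right)\cdot n(x)\geq 0$, i.e.~\eqref{eq:Nagumo}, so Proposition~\ref{prop:invariance} yields~\textbf{(P)}.

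I expect no genuine obstacle: this is a streamlined version of Proposition~\ref{prop:cond2}, and the only point requiring a little care is the simultaneous handling of the denominator $1/D(x)$ and the sign of the integral, so that discarding $1/D(x)$ produces a bound that is at once sufficient for every $\rho$ and independent of the (unknown, solution-dependent) value of $\nabla(\rho*\eta)$.
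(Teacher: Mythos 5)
Your proof is correct and takes essentially the same approach as the paper, whose proof of Proposition~\ref{prop:cond} is declared ``entirely similar'' to that of Proposition~\ref{prop:cond2}: reduce~\textbf{(P)} to the Nagumo condition~\eqref{eq:Nagumo}, maximize $\int_\Omega \rho(y)\,\nabla\eta(x-y)\cdot n(x)\,\d{y}$ over $\rho$ via the bathtub principle (attained at $\rho = R\,\caratt{H_x}$, the set $\Omega_x\cap H_x$ collapsing to $H_x$ since the domain of the usual convolution does not depend on $x$), and conclude by Proposition~\ref{prop:invariance}. Your explicit treatment of the denominator $D(x)\geq 1$ together with the nonnegativity of the supremum, and your appeal to $g(x)\cdot n(x)\geq 0$ from~\textbf{(g)} to pass from $\left(\delta+\mathcal{I}(\rho)\right)\cdot n\geq 0$ to the full field, merely make explicit steps the paper leaves implicit.
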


With this choice, a way to define $\delta$ on all $\Omega$ could be
\begin{displaymath}
  \delta (x)
  =
  \epsilon \, R \, \alpha(x)
  \int_{H_x} \nabla \eta(x-y) \, \d{y} \,,
\end{displaymath}
where $\alpha$ is as in~\textbf{($\boldsymbol{\alpha}$)}.

The proof is entirely similar to that of Proposition~\ref{prop:cond2}.

\small{

  \bibliography{pietons}

  \bibliographystyle{abbrv} }

\end{document}